\documentclass{amsart}
\usepackage{amsmath,amssymb,amsthm,array,young,environ}
\usepackage[matrix,arrow,curve]{xy}

\newtheorem{cor}{Corollary}
\newtheorem{lem}{Lemma}
\newtheorem{prop}{Proposition}
\newtheorem{propconstr}{Proposition-Construction}

\newtheorem{conj}{Conjecture}
\newtheorem{thm}{Theorem}

\theoremstyle{remark}
\newtheorem{Rem}{Remark}

\newcommand{\FD}{\mathsf{FD}}
\newcommand {\Fl}{\mathrm{Fl}}
\newcommand {\GL}{\mathrm{GL}}

\DeclareMathOperator {\im}{im}
\DeclareMathOperator {\id}{id}
\DeclareMathOperator {\Ad}{Ad}
\DeclareMathOperator {\End}{End}
\DeclareMathOperator {\Hom}{Hom}
\DeclareMathOperator {\RSK}{RSK_{mir}}
\DeclareMathOperator {\mir}{mir}
\DeclareMathOperator {\Sh}{Sh}

\newcommand {\eql}[2]{\begin{equation}\label{#1}#2\end{equation}}

\newcommand {\F}{\mathrm F}

\newcommand {\AAbb}{\mathbb A}
\newcommand {\CC}{\mathbb C}
\newcommand {\FF}{\mathbb F}
\newcommand {\GG}{\mathbb G}
\newcommand {\PP}{\mathbb P}
\newcommand {\QQ}{\mathbb Q}
\newcommand{\BZ}{{\mathbb Z}}

\newcommand {\gl}{\mathfrak {gl}}
\newcommand {\gS}{\mathfrak S}
\newcommand{\fP}{{\mathfrak P}}

\newcommand {\bH}{\mathbf H}

\newcommand {\bR}{\mathbf R}
\newcommand {\bT}{\mathbf T}
\newcommand{\bn}{{\mathbf n}}
\newcommand{\bq}{{\mathbf q}}
\newcommand{\bt}{{\mathbf t}}
\newcommand{\bv}{{\mathbf v}}

\renewcommand {\H}{\mathcal H}
\newcommand {\N}{\mathcal N}
\newcommand {\M}{\mathcal M}
\newcommand {\R}{\mathcal R}
\newcommand{\Olm}{{\mathcal O}_{\nu,\theta}}
\newcommand{\Ol}{{\mathcal O}_{\nu}}
\newcommand{\CR}{{\mathcal R}}
\newcommand{\CH}{{\mathcal H}}

\newcommand{\sk}{{\mathsf k}}

\renewcommand{\a}{\alpha}
\renewcommand{\b}{{\beta}}
\newcommand{\g}{\gamma}
\renewcommand{\d}{\delta}
\renewcommand{\l}{\lambda}
\renewcommand{\O}{{\Omega}}
\newcommand{\s}{\sigma}
\renewcommand{\S}{\Sigma}
\renewcommand{\1}[1]{F_{1,{#1}}}
\renewcommand{\2}[1]{F_{2,{#1}}}

\newcommand{\tw}{{\tilde w}}
\newcommand{\St}{\mathrm{St}}
\newcommand{\bo}{\bigoplus}
\newcommand{\btu}{\bigtriangleup}
\newcommand{\invl}{\leftrightarrow}

\newcommand{\ol}{\overline}
\newcommand{\Ql}{{{\underline{\overline{\mathbb Q}}}{}_l}}

\newcommand{\utH}{\tilde{\underline H}{}}

\newcommand {\uf}{\times_{{}_{\Fl(V)}}}
\newcommand{\zowt}[1]{\overline{\O}_{\tilde{w}_{#1}}}
\newcommand{\zowts}{\overline{\O}_{\tilde{w}*s}}
\newcommand{\owt}[1]{\O_{\tilde{w}_{#1}}}
\newcommand{\zos}[1]{\overline{\O}_{s_{#1}} }
\newcommand{\owts}[1]{\O_{\tilde{w} s_{#1}} }
\newcommand{\os}[1]{\O_{s_{#1}}}
\newcommand{\ot}[1]{\O_{\tilde{#1}} }
\let\x\times

\newcommand{\tT}{\tilde{T}}
\newcommand{\alg}[1]{{#1}\otimes_{\BZ[\bv,\bv^{-1}]}\QQ(\bv) }
\newcommand{\Z}[1]{\Fl_{#1}}
\renewcommand{\1}[1]{F_{1,{#1}}}
\renewcommand{\2}[1]{F_{2,{#1}}}

\emergencystretch=2cm
\newcommand\ncmd\newcommand
\ncmd{\young}[1]{\ensuremath{\vcenter{%
\def\byng{\begin{Young}}
\byng
#1
\crcr\end{Young}}}} 
\NewEnviron{yt}{\young{\BODY}}
  \ncmd\ycr\cr

\ncmd\setm\setminus

\begin{document}
\title{Mirabolic Robinson-Schensted-Knuth correspondence}
\author{Roman Travkin}
\address{Massachusetts Institute of Technology\\
Mathematics Department, Cambridge MA 02139 USA}
\email{travkin@alum.mit.edu}

\begin{abstract}
The set of orbits of $GL(V)$ in $Fl(V)\times Fl(V)\times V$ is finite,
and is parametrized by the set of certain decorated permutations in a
work of Solomon. We describe a Mirabolic RSK correspondence
(bijective) between this set of decorated permutations and the set of triples:
a pair of standard Young tableaux, and an extra partition. It gives rise to
a partition of the set of orbits into combinatorial cells. We prove that the
same partition is given by the type of a general conormal vector to an orbit.
We conjecture that the same partition is given by the bimodule Kazhdan-Lusztig
cells in the bimodule over the Iwahori-Hecke algebra of $GL(V)$ arising from
$Fl(V)\times Fl(V)\times V$. We also give conjectural applications to the
classification of unipotent mirabolic character sheaves on $GL(V)\times V$.
\end{abstract}

\maketitle

\section{Introduction}

\subsection{}
Let $v\in V$ be a nonzero vector in an $N$-dimensional vector space over
a field $\sk$. The stabilizer $P_N$ of $v$ in $\GL_N=\GL(V)$ is called a
{\em mirabolic} subgroup of $\GL_N$. The special properties of the pair
$P_N\subset\GL_N$ are among the principal reasons why the
representation theory
of $\GL_N$ is in many respects simpler than that of the other reductive
groups over $\sk$ (see e.g.~\cite{B},~\cite{G}).
One more remarkable feature of the pair $P_N\subset\GL_N$ was discovered by
P.~Etingof and V.~Ginzburg a few years ago. Namely, the quantum Hamiltonian
reduction of the differential operators on $\GL_N$ with respect to $P_N$ is
isomorphic to the spherical trigonometric Cherednik algebra $H_N$
(see e.g.~\cite{EG}); equivalently, the quantum Hamiltonian reduction of
the differential operators on $\GL_N\times V$ with respect to $\GL_N$ is
isomorphic to $H_N$. Thus one is led to study the D-modules on
$\GL_N\times V$ whose quantum Hamiltonian reduction lies in the category
${\mathcal O}$ for $H_N$ (see~\cite{FG}).
The corresponding perverse sheaves are called {\em mirabolic character
sheaves}; they are close relatives of Lusztig's character
sheaves (see e.g.~\cite{L}).
The present work
is a first step towards a classification of mirabolic character sheaves.

\subsection{}
According to Lusztig's classification of character sheaves, the set of
isomorphism classes of unipotent character sheaves on a reductive group $G$
is partitioned into cells, which correspond bijectively to special unipotent
classes in $G$. For $G=\GL_N$, each unipotent class is special, and each cell
contains a unique character sheaf; thus the unipotent character sheaves are
classified by their (nilpotent) singular supports, so they are numbered by
partitions of $N$.

Finally, recall that the cells in question are
the two-sided Kazhdan-Lusztig cells of the finite Hecke algebra
${\mathcal H}_N$. If $\Fl(V)$ stands for the flag variety of $\GL(V)$, then
${\mathcal H}_N$ is the Grothendieck ring of the constructible
$\GL(V)$-equivariant mixed Tate complexes on $\Fl(V)\times\Fl(V)$
(multiplication given by convolution). The two-sided cells arise from
the two-sided ideals spanned by the subsets of the Kazhdan-Lusztig basis
(formed by the classes of Goresky-MacPherson sheaves). This basis is
numbered by the symmetric group $\gS_N$, and its partition into two-sided
cells is given by the Robinson-Shensted-Knuth algorithm, see~\cite{KL}.
A $\GL(V)$-orbit in $\Fl(V)\times\Fl(V)$ numbered by $w\in\gS_N$ lies in
a two-sided cell $\lambda$ iff a general conormal vector to the orbit is
a nilpotent element of type $\lambda$, see~\cite{S}.

\subsection{}
The starting point of our work is a classification of $\GL(V)$-orbits in
${\mathcal N}\times V$ where $\mathcal N$ is the nilpotent cone in $\End(V)$
(it was independently obtained by P.~Achar and A.~Henderson in~\cite{AH}).
We prove (see section~\ref{comparison}) that the set of orbits is in a natural
bijection with the set $\mathfrak P$ of pairs of partitions $(\nu,\theta)$
such that $|\nu|=N$, and $\nu\supset\theta$, that is
$\nu_i\geq\theta_i\geq\nu_{i+1}$ for any $i\geq1$. Note that $\mathfrak P$
arises also in Zelevinsky's classification of restrictions of unipotent
irreducible representations of $\GL_N({\mathbb F}_q)$ to $P_N({\mathbb F}_q)$
(see~\cite{Z}, Theorem~13.5), and this coincidence is not accidental.

A conormal vector to a $\GL(V)$-orbit in $\Fl(V)\times\Fl(V)\times V$ lies in
the variety $Z$ of quadruples $(u_1,u_2,v,v^*)$ where $v\in V$, and
$v^*\in V^*$, and $u_1,u_2$ are nilpotent endomorphisms of $V$ such that
$u_1+u_2+v\otimes v^*=0$. The set of orbits of $\GL(V)$ in $Z$ is infinite,
and $Z$ is reducible (it has $N+1$ irreducible components of dimension $N^2$)
but we define in~\ref{XYZ} a collection of closed irreducible subvarieties of
$Z$ numbered by the triples $(\nu\supset\theta\subset\nu')$ of partitions
such that $|\nu|=|\nu'|=N$. These subvarieties are the images of the closures
of the conormal bundles to $\GL(V)$-orbits in $\Fl(V)\times\Fl(V)\times V$;
they are mirabolic analogues of
the nilpotent orbit closures in $\mathcal N$.

The Hecke algebra ${\mathcal H}_N$ acts by the right and left convolution
on the Grothendieck group of the constructible $\GL(V)$-equivariant mixed
Tate complexes on $\Fl(V)\times\Fl(V)\times V$; we will denote this
bimodule by ${\mathcal R}_N$. It comes equipped with a Kazhdan-Lusztig
basis numbered by the finite set $RB_N$ of $\GL(V)$-orbits in
$\Fl(V)\times\Fl(V)\times V$, described in~\cite{Sol} (see also~\cite{M,MWZ}).
Thus we can define a partition of $RB_N$ into {\em bimodule KL cells}.
In this note we define an analogue of the RSK algorithm which is conjectured
to be connected with these bimodule cells.
Our {\em mirabolic RSK correspondence} (see subsection~\ref{RSK2})
is a bijection between the set $RB_N$ of
colored permutations of $\{1,\ldots,N\}$, and the set of triples
$\{(T_1,T_2,\theta)\}$ where $T_1$ (resp. $T_2$) is a standard tableau of
the shape $\nu$ (resp. $\nu'$) where $|\nu|=|\nu'|=N$, and $\theta$ is
another partition such that $\nu\supset\theta\subset\nu'$.

We conjecture that the colored permutations $\tw,\tw'\in RB_N$ lie in
the same bimodule KL cell iff the output of the mirabolic RSK algorithm
on $\tw,\tw'$ gives the same partitions: $\nu(\tw)=\nu(\tw'),\
\nu'(\tw)=\nu'(\tw'),\ \theta(\tw)=\theta(\tw')$ (see Theorem~\ref{L}
for a partial result in this direction). An equivalent form of the conjecture
states that the ${\mathcal H}_N$-subquotient bimodules of ${\mathcal R}_N$
supported by the bimodule KL cells are irreducible
(cf. Proposition~\ref{algH}).
We also define a partition of $RB_N$ into {\em microlocal two-sided cells}
according to the type of a general conormal vector to the corresponding orbit.
We prove that the colored permutations $\tw,\tw'\in RB_N$ lie in the same
microlocal two-sided cell iff the output of the mirabolic RSK algorithm
on $\tw,\tw'$ gives the same partitions $\nu\supset\theta\subset\nu'$
(see Theorem~\ref{fthm}). In subsection~\ref{FD} we describe combinatorially
the involution $\operatorname F$ on $RB_N$ arising from the Fourier-Deligne
transform from the category of $\GL(V)$-equivariant sheaves on
$\Fl(V)\times\Fl(V)\times V$ to the category of $\GL(V)$-equivariant sheaves
on $\Fl(V^*)\times\Fl(V^*)\times V^*$.
In subsection~\ref{char} we give a conjectural application to the
classification of unipotent mirabolic character sheaves.
In subsection~\ref{asymp} we formulate a conjecture on the structure of the
asymptotic bimodule over Lusztig's asymptotic ring $J$ for {\em diagonal}
bimodule KL cells: those corresponding to triples $\nu\supset\theta\subset\nu$
(that is, $\nu=\nu')$.

\subsection{}
Let us emphasize that almost all arguments and constructions in the paper
are of elementary combinatorial and linear algebraic nature. For instance,
even though the bimodule ${\mathcal R}_N$ over the Hecke algebra
${\mathcal H}_N$ is of geometric origin, it is described explicitly in
Propositions~\ref{thm_expl} and~\ref{thm_explicit}. The Kazhdan-Lusztig basis
of ${\mathcal R}_N$ is defined by an inductive combinatorial algorithm,
similarly to the Kazhdan-Lusztig basis of ${\mathcal H}_N$.
Only the description of the $W$-graph of the ${\mathcal H}_N$-bimodule
${\mathcal R}_N$ in Proposition~\ref{HKL} does rely on geometric
considerations.

\subsection{Acknowledgments}
I am grateful to P.~Achar and A.~Henderson for sending me~\cite{AH}
prior to its publication, where our Theorem~\ref{Thm1} is proved independently
(as Proposition~2.3). I am indebted to M.~Finkelberg for posing the problem,
numerous valuable discussions and help in editing the paper.  I would also like 
to thank G.~Lusztig for pointing out the reference~\cite{Sol} which also classifies the 
orbits in $\Fl(V)\x\Fl(V)\x V$ and studies $\mathcal R$ as an algebra,
 and D.~Zakharov for finding a missing case in the list of conditions in the 
formulas for the actions of the Hecke algebra on the mirabolic bimodule 
(see Propositions \ref{thm_expl} and \ref{thm_explicit}) in a previous version of the paper.
I am thankful to Independent University of Moscow for education,
financial support and
various help. I thank P.~Etingof for creating the ideal conditions for my work.

\section{$\GL(V)$-orbits in ${\mathcal N}\times V$}

\subsection{}
The following Theorem essentially goes back to J.~Bernstein, who proved
in~\cite{B},~section~4.2, the finiteness of the set of
$P_N$-orbits in the nilpotent cone of $\gl_N$. It was independently
proved by P.~Achar and A.~Henderson (\cite{AH}, Proposition~2.3).

\begin{thm}
\label{Thm1}
Let ${\mathcal N}\subset \gl(V) $ be the nilpotent cone.
There is a one-to-one correspondence between $\GL(V)$-orbits in
${\mathcal N} \times V $ and pairs of partitions $(\lambda,\mu)$
such that $|\lambda |+|\mu |= \sum \lambda_i+\sum \mu_i=N$.
Furthermore, if a pair $(u,v)\in{\mathcal N}\times V$ belongs
to the orbit corresponding to the pair $(\lambda,\mu)$ then the type of
$u$ is equal to $\lambda+ \mu=(\lambda_1+ \mu_1,\lambda_2+ \mu_2,\dots)$ .
\end{thm}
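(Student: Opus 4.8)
The plan is to classify pairs $(u,v)\in\N\times V$ up to $\GL(V)$ by analyzing the action of $u$ on the vector $v$, reducing to a normal form built out of two Young diagrams. First I would recall the Jordan form picture: a nilpotent $u$ of type $\rho=(\rho_1\ge\rho_2\ge\cdots)$ gives a decomposition of $V$ into Jordan blocks, equivalently $V$ as a $\sk[u]$-module is $\bigoplus_i \sk[u]/(u^{\rho_i})$. The extra datum is the vector $v$, and the $\GL(V)$-action that we mod out by is exactly the automorphisms of the $\sk[u]$-module $V$, so the real content is: classify cyclic vectors $v$ in a finite nilpotent $\sk[u]$-module up to module automorphism. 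The key invariants are the numbers $\dim(\sk[u]v + u^k V)/(u^k V)$, or equivalently the Jordan type of $u$ restricted to the submodule $\sk[u]v$ versus the Jordan type on a complement; these will be packaged into the pair $(\lambda,\mu)$ with $\lambda+\mu=\rho$.

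The main steps I would carry out are as follows. (1) Given $(u,v)$, set $W=\sk[u]v\subset V$, a single-generator (cyclic) torsion $\sk[u]$-module, hence $W\cong\sk[u]/(u^a)$ for a unique $a\ge 0$; I'd let $\mu$ record the ``co-type'' data and $\lambda$ the type of $u$ on a suitably chosen $u$-stable complement, or more precisely build $\lambda,\mu$ from the successive quotients $u^kV\cap W$. (2) Show this pair $(\lambda,\mu)$ is a $\GL(V)$-invariant of the orbit: any $g\in\GL(V)$ with $g(u,v)=(u',v')$ intertwines the module structures and sends $\sk[u]v$ to $\sk[u']v'$, so all the subquotient dimensions are preserved. (3) Show conversely that $(\lambda,\mu)$ determines the orbit, by exhibiting an explicit normal form: take $V=\bigoplus_i \sk[u]/(u^{\lambda_i+\mu_i})$ and let $v=\sum_i u^{\mu_i}e_i$ where $e_i$ is the generator of the $i$-th block (so $v$ ``starts $\mu_i$ steps into'' block $i$); then prove any $(u,v)$ of the given invariant type is $\GL(V)$-conjugate to this one by induction on $N$, peeling off the cyclic summand $\sk[u]v$ and invoking the fact (over a field, for finitely generated modules over a PID such as $\sk[u]/(u^M)$) that a cyclic submodule of the largest possible order is a direct summand. (4) Check that $(\lambda,\mu)\mapsto$ this normal form runs over all pairs with $|\lambda|+|\mu|=N$, and that $|\lambda|+|\mu|=\dim V=N$ by construction. (5) Read off that the type of $u$ is $\lambda+\mu$ directly from the normal form, since the $i$-th Jordan block has size $\lambda_i+\mu_i$.

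The main obstacle I expect is Step (3), specifically the surjectivity/uniqueness of the normal form: one must argue that once $\sk[u]v\cong\sk[u]/(u^a)$ is split off as a direct summand, the remaining $u$-module (a complement) is determined up to isomorphism by $\lambda$ — this is fine — but one must also ensure that two different ``insertion depth'' vectors in the same ambient module are or are not conjugate exactly as predicted; the subtlety is that $\GL(V)$ here is the full automorphism group of $(V,u)$, which is larger than $\mathrm{Aut}$ of each block, and it can mix blocks of equal or differing sizes, so I must verify that the depths $\mu_i$ (sorted appropriately) are a complete invariant under this larger group. I would handle this by the module-theoretic argument that $\sk[u]v$ sitting inside $V$ as a submodule with a given composition into the ambient Jordan filtration is rigid up to automorphism — essentially, the pair $(V,\sk[u]v)$ as a filtered $\sk[u]$-module has no moduli — which follows from the structure theory of modules over $\sk[u]/(u^M)$. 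A clean alternative, which I might adopt to sidestep case analysis, is to not choose any complement at all but to characterize the orbit purely by the function $k\mapsto \dim u^kV$ and $k\mapsto\dim(u^kV\cap\sk[u]v)$, show these two sequences are equivalent data to $(\lambda,\mu)$, prove they are invariants, and prove they are complete by the inductive peeling argument; the bookkeeping converting these sequences to the partitions $\lambda$ and $\mu$ is then the only routine computation left.
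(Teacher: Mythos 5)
Your route is genuinely different from the paper's. The paper does not construct a normal form by peeling; instead it proves a general lemma (Lemma~\ref{0}) that for an algebra $A\subset\End(V)$ with finitely many submodules in $V$, the $A^\times$-orbits in $V$ are exactly the locally closed sets $\Omega_S=S\setminus\bigcup_{S'\subsetneq S}S'$ indexed by $A$-submodules $S$. It then takes $A=Z(u)$ (the full centralizer, \emph{not} $\sk[u]$), so that $\GL(V)_u=Z(u)^\times$, and classifies $Z(u)$-submodules directly: using the generators $a_{i,i'}$ of $\Hom_{\sk[t]}(V_i,V_{i'})$ one sees every $Z(u)$-submodule is $\bigoplus_i u^{\mu_i}V_i$ with $\mu$ and $\lambda=\nu-\mu$ partitions. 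This completely avoids the issue of normal forms or of whether anything is a direct summand. Note in passing that the paper defines $\lambda$ as the type of $u|_{Z(u)v}$, not of $u|_{\sk[u]v}$; your invariant based on $\sk[u]v$ is equivalent (it is what appears in Corollary~\ref{hor}, via the bijection $\Xi$), but it is a different packaging.

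There is a concrete gap in your Step (3) that the paper's argument sidesteps. You propose to peel off the cyclic submodule $\sk[u]v$ as a direct summand, ``invoking the fact \dots that a cyclic submodule of the largest possible order is a direct summand.'' But $\sk[u]v$ is typically \emph{not} of largest possible order and is typically \emph{not} a direct summand: take $V=\sk[t]/(t^2)$, $u=$ multiplication by $t$, $v=\bar t$; then $\sk[u]v=\langle\bar t\rangle\ne0,V$ and $V$ is indecomposable, so $\sk[u]v$ has no complement. (This $(u,v)$ corresponds to $\lambda=\mu=(1)$.) You flag this as the main obstacle and propose to replace the peeling by a rigidity statement: the triple consisting of a finite torsion $\sk[t]$-module $V$, a cyclic submodule $W$, and the embedding, is determined up to automorphism of $V$ by the three Jordan types. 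That statement is true (it is a Pieri-rule phenomenon), and it would indeed finish the proof; but it is not simply ``the structure theory of modules over $\sk[u]/(u^M)$'' --- the structure theory classifies modules, not module-submodule pairs --- and it is essentially equivalent to the surjectivity/uniqueness you are trying to prove. So as written the argument is circular at exactly the point you identify. A workable peeling would have to extract not $\sk[u]v$ but a top Jordan block summand $W_1\cong\sk[t]/(t^{\nu_1})$ chosen so that after a basis change the $W_1$-component of $v$ has the extremal allowable depth, then induct on the complement; this works but requires the case bookkeeping you were hoping to avoid. Comparing the two: your approach, if completed, yields an explicit normal form at the cost of a nontrivial rigidity lemma or peeling analysis; the paper's Lemma~\ref{0} trades that for a slick reduction to listing $Z(u)$-submodules, which turn out to have an obvious shape.
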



\begin{proof}
Given a pair $(\lambda,\mu)$ such that $|\lambda|+|\mu|=N$, we will
construct the pair $(u,v)$ in the following way. Let $\nu=\lambda+\mu$
and $u$ be a nilpotent of type $\nu$. Denote by $D_\nu$ the set of boxes
of the Young diagram $\nu$, i.e. $D_\nu=\{(i,j)\mid 1\le j\le\nu_i\}$. Choose
a basis $e_{i,j}$     $((i,j)\in D_\nu)$ such that $ue_{i,j}=e_{i,j-1}$ for
$2\le j\le\nu_i$ and $ue_{i,1}=0$. Let $v=\sum\limits_i e_{i,\lambda_i}$
where we put $e_{i,0}=0$.

The inverse correspondence is obtained as follows.
Let $(u,v)\in\N\times V$. Denote by $Z(u)$ the centralizer of $u$ in the
algebra $\End(V)$.
Let $\nu$ be the type of $u$ and $\lambda$ be the type of $u|_{Z(u)v}$
and $\mu$  be the type of $u|_{V/Z(u)v}$.

Let us prove that these two correspondences are mutually inverse.
We will need the following lemma.

\begin{lem}
\label {0}
Let $A\subset \End(V) $ be an associative algebra with identity and $A^\times$
the multiplicative group of $A$. Suppose the $A$-module $V$ has finitely many
submodules. Then $A^\times$-orbits in $V$ are in one-to-one correspondence with
these submodules. Namely, each $A^\times$-orbit has the form $\Omega_S :=
S\setminus\bigcup\limits_{\text{\normalfont submodules $S'\varsubsetneq
S$}}\hspace{-1cm}S'$  \phantom{wh} where $S$ is an $A^\times$-submodule of $V$.
	
\end{lem}

\begin{proof}
It is clear that the sets $\Omega_S$ give us a decomposition of $V$
into a union of locally closed
subvarieties. So, we must prove that two points $v,v'\in V$ belong to the same
$A^\times$-orbit iff
they belong to  the same $\Omega_S$, i.~e. they generate the same $A$-submodule
$S=Av=Av'$.
If $v$ and $v'$ belong to the same $A^\times$-orbit then $v'=av$ for some
$a\in A^\times$ and $Av=Aav=Av'$.
Conversely, let $v,v'\in\Omega_S$ for some $S$, so that  $Av=Av'=S$.
Now $A^\times$ is a non-empty Zariski open, and hence dense, subset in~$A$.
Consequently, we see that $A^\times v$  and $A^\times v'$ are
constructible dense subsets of $S$. This implies that
$A^\times v \cap A^\times v'\ne\varnothing$  and  therefore   $A^\times
v=A^\times v'$. \end{proof}

\subsubsection{}
Let us deduce the theorem from the lemma. Fix a partition $\nu$ of $N$.
Consider all the $\GL(V)$-orbits in $\N\times V$ consisting of points
$(u,v)$ where $u$ has the type $\nu$. These orbits correspond to
$\GL(V)_u$-orbits in $V$ where $\GL(V)_u$ is the stabilizer of $u$ in
$\GL(V)$. Note that $\GL(V)_u=(Z(u))^\times$.
According to the lemma it suffices to prove that $V$ has finitely many
$Z(u)$-submodules and find all these submodules. Consider $V$ as a
$\sk[t]$-module where $t$ acts by $u$. This module is isomorphic to
$\bigoplus\limits_i \sk[t]/(t^{\nu_i}\sk[t])$. Let
$V_i\cong \sk[t]/(t^{\nu_i}\sk[t])\subset V$ be the the $i$-th
direct summand of
this sum. For each $i$  let $\{e_{i,j}\}_{j=1}^{\nu_i}$ be a basis
of $V_i$ such
that $ue_{i,j}= e_{i,j-1}\ (j\ge2)$ and $ue_{i,1}=0$. We  can write
$$Z(u)=\End_{\sk[t]}(V)=\bigoplus_{i,i'}\Hom_{\sk[t]}(V_i,V_{i'})\cong
\bigoplus_{i,i'}\sk[t]/(t^{\min\{\nu_i,\nu_{i'}\}}\sk[t])$$
Let $a_{i,i'}$ be a generator of the $\sk[t]$-module
$\Hom_{\sk[t]}(V_i,V_{i'})\subset \End_{\sk[t]}(V)$ given by
$$a_{i,i'}e_{i_1,j}=\delta_{i,i_1}e_{i',(j-\max\{0,\nu_i-\nu_{i'}\})}
\qquad\qquad (\text{we put  } e_{i,j}=0 \text{  for $j\le 0$}).$$
 Now let $S$ be a $Z(u)$-submodule of $V$.
Sinse $S$ is invariant under  $a_{i,i'}$ for all $i$,
$S$ has a form $S=\bigoplus_i S_i$ where $S_i\subset V_i$.
Further since $S$ invariant under $u\in Z(u)$ all the $S_i$ have the
form $u^{\mu_i}V_i$. Put $\lambda_i=\nu_i-\mu_i$. The invariance of $S$
under all $a_{i,i'}$ is equivalent to the fact that $\lambda$ and $\mu$
are partitions, i.e. $\lambda_1\ge\lambda_2\ge\dots$ and
$\mu_1\ge\mu_2\ge\dots$. So we have shown that $Z(u)$-submodules of $V$
are in one-to-one correspondence with pairs of partitions $(\lambda, \mu)$
such that $\lambda+\mu=\nu$. An application of Lemma~\ref{0} concludes the
proof of the theorem. \end{proof}




\subsection{Comparison with Zelevinsky's parametrization}
\label{comparison}
A.~Zelevinsky considers in~\cite{Z},~Theorem~13.5.a)
the set $\mathfrak Z$ of isomorphism
classes of pairs $(U,W)$ where $U$ is an irreducible unipotent complex
representation of the finite group $\GL_N({\mathbb F}_q)$, and $W$ is an
irreducible constituent of the restriction of $U$ to the mirabolic subgroup
$P_N({\mathbb F}_q)$. He constructs a natural bijection between $\mathfrak Z$
and the set $\mathfrak P$ of pairs of partitions $(\nu,\theta)$ such that
$|\nu|=N$, and $\tilde \nu_j-1\le\widetilde{\theta_j}\le\tilde\nu_j$
for all $j$
(this is equivalent to $\nu_i\ge\theta_i\ge\nu_{i+1}$ for all $i$).
The following Proposition-Construction establishes a natural bijection
between $\mathfrak P$, and the set of pairs of partitions $(\lambda,\mu)$
such that $|\lambda|+|\mu|=N$.

\begin{propconstr}
\label{Thm2}
Let $\nu$ be a partition and $\tilde{\nu}$
the conjugate partition so that $\nu_i\ge j \iff\tilde\nu_j\ge i$.
There exists a natural one-to-one correspondence between pairs
$(\lambda,\mu)$ of partitions such that
$\lambda+\mu=\nu$, and partitions $\theta$ such
that $\tilde \nu_j-1\le\widetilde{\theta_j}\le\tilde\nu_j$ for all $j$
(this is equivalent to $\nu_i\ge\theta_i\ge\nu_{i+1}$ for all $i$).
This correspondence is given by
\begin{gather}
\label{pc1}\theta_i=\lambda_{i+1}+\mu_i;\\
\label{pc2}\left\{  \begin{aligned}
\lambda_i&=\sum_{k=i}^\infty (\nu_k-\theta_k)=
\nu_i-\theta_i+\nu_{i+1}-\dots,\\
\mu_i&=\sum_{k=i}^\infty (\theta_k-\nu_{k+1})=
\theta_i-\nu_{i+1}+\theta_{i+1}-\dots.
\end{aligned}\right.
\end{gather}
\end{propconstr}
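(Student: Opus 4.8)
The plan is to take the formulas \eqref{pc1} and \eqref{pc2} at face value: define two maps by these formulas, check that each lands in the intended target set, and check that they are mutually inverse. Everything reduces to elementary manipulation of weakly decreasing sequences together with one standard fact about conjugate partitions, so no deep input is required; the proof is essentially bookkeeping.

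For the forward map, given partitions $\lambda,\mu$ with $\lambda+\mu=\nu$ and $\theta_i:=\lambda_{i+1}+\mu_i$, I would check: $\theta_i\ge\theta_{i+1}$ because $\lambda_{i+1}\ge\lambda_{i+2}$ and $\mu_i\ge\mu_{i+1}$; next $\theta_i\le\lambda_i+\mu_i=\nu_i$ because $\lambda_{i+1}\le\lambda_i$; and $\theta_i\ge\lambda_{i+1}+\mu_{i+1}=\nu_{i+1}$ because $\mu_i\ge\mu_{i+1}$. Hence $\theta$ is a partition with $\nu_i\ge\theta_i\ge\nu_{i+1}$ (finite support being clear from $\theta_i\le\nu_i$). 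For the backward map, given such a $\theta$ and $\lambda,\mu$ defined by \eqref{pc2}, the sums are finite since $\nu_k=\theta_k=0$ for $k\gg0$; nonnegativity of the individual summands ($\nu_k\ge\theta_k$, resp.\ $\theta_k\ge\nu_{k+1}$) gives $\lambda_i,\mu_i\ge0$; the successive differences $\lambda_i-\lambda_{i+1}=\nu_i-\theta_i$ and $\mu_i-\mu_{i+1}=\theta_i-\nu_{i+1}$ are $\ge0$, so $\lambda$ and $\mu$ are partitions; and adding the two series telescopes to $\lambda_i+\mu_i=\sum_{k\ge i}(\nu_k-\nu_{k+1})=\nu_i$.

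It then remains to see the two maps are inverse to each other and that the two descriptions of the condition on $\theta$ agree. Starting from $(\lambda,\mu)$, passing to $\theta$ by \eqref{pc1}, and recomputing by \eqref{pc2}, we have $\nu_k-\theta_k=\lambda_k-\lambda_{k+1}$, so $\sum_{k\ge i}(\nu_k-\theta_k)$ telescopes back to $\lambda_i$, and likewise $\theta_k-\nu_{k+1}=\mu_k-\mu_{k+1}$ gives back $\mu_i$. Conversely, starting from $\theta$ and recomputing $\theta'_i=\lambda_{i+1}+\mu_i$, peeling the $k=i$ term off the second series of \eqref{pc2} and combining with the first gives $\theta'_i=(\theta_i-\nu_{i+1})+\sum_{k\ge i+1}(\nu_k-\nu_{k+1})=\theta_i$. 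Finally, $\nu_i\ge\theta_i$ for all $i$ is precisely the containment $\theta\subseteq\nu$, i.e.\ $\widetilde{\theta_j}\le\tilde\nu_j$ for all $j$; granting containment, the remaining inequalities $\theta_i\ge\nu_{i+1}$ say exactly that $\nu/\theta$ is a horizontal strip, equivalently that every column of $\nu/\theta$ contains at most one box, i.e.\ $\tilde\nu_j-\widetilde{\theta_j}\le1$. Together these give $\tilde\nu_j-1\le\widetilde{\theta_j}\le\tilde\nu_j$, as stated.

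I do not expect a genuine obstacle: the content is a finite amount of verification. The only points needing care are the index shift in \eqref{pc1} and the resulting off-by-one in the telescoping for \eqref{pc2} (in particular the asymmetry between how $\lambda$ and $\mu$ enter the formulas), and -- if one wants the argument fully self-contained rather than citing it -- writing out the standard conjugate-partition characterization of horizontal strips used in the last step.
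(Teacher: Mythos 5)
Your verification is correct and fills in exactly what the paper leaves to the reader: the paper's entire proof is the single sentence ``It is easy to see that equations~\eqref{pc1} and~\eqref{pc2} give mutually inverse correspondences,'' so your bookkeeping (membership in the target sets, telescoping, the two-way check of inverses, and the horizontal-strip reformulation of the condition on $\theta$) is precisely the intended argument, spelled out.
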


\begin{proof} It is easy to see that equations~\eqref{pc1}
and~\eqref{pc2} give mutually inverse correspondences. \end{proof}

We will denote the above correspondence by
$(\nu,\theta)=\Upsilon(\lambda,\mu),\ (\lambda,\mu)=\Xi(\nu,\theta)$.

\begin{cor}
\label{hor}
A pair $(u,v)$ lies in an orbit $({\mathcal N}\times
V)_{(\lambda,\mu)}$ such that $(\lambda,\mu)=\Xi(\nu,\theta)$ iff the
Jordan type of $u$ is $\nu$, and the Jordan type of $u|_{V/\langle
  v,uv,u^2v,\ldots\rangle}$ is $\theta$.
\end{cor}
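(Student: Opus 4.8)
The plan is to combine Theorem~\ref{Thm1} with the explicit formulas of Proposition-Construction~\ref{Thm2}. By Theorem~\ref{Thm1}, the pair $(u,v)$ lies in $({\mathcal N}\times V)_{(\lambda,\mu)}$ precisely when the type of $u|_{Z(u)v}$ is $\lambda$ and the type of $u|_{V/Z(u)v}$ is $\mu$; in particular the type of $u$ is $\lambda+\mu=\nu$, which already gives the first half of the asserted characterization. So the real content is to identify the type of $u|_{V/\langle v,uv,u^2v,\ldots\rangle}$ with $\theta$, where $\langle v,uv,u^2v,\ldots\rangle$ is the $\sk[t]$-submodule generated by $v$ (with $t$ acting as $u$), a submodule of the larger $Z(u)$-submodule $Z(u)v$.

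First I would set up coordinates exactly as in the proof of Theorem~\ref{Thm1}: decompose $V=\bigoplus_i V_i$ with $V_i\cong\sk[t]/(t^{\nu_i})$ and basis $e_{i,j}$, and recall from the construction in Theorem~\ref{Thm1} that the orbit of $(\lambda,\mu)$ contains the representative with $v=\sum_i e_{i,\lambda_i}$. For this representative the cyclic $\sk[t]$-module $\sk[t]v=\langle v,uv,\ldots\rangle$ is spanned by the vectors $u^m v=\sum_i e_{i,\lambda_i-m}$ for $m\geq 0$; I would compute that this submodule has the form $\bigoplus_i\, u^{\,\lambda_i-r_i}V_i$ for suitable exponents and hence that the quotient $V/\sk[t]v$ is a direct sum of cyclic modules whose dimensions I can read off directly. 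The combinatorial heart is then to check that those quotient dimensions, sorted into a partition, coincide with $\theta$ as given by formula~\eqref{pc1}, $\theta_i=\lambda_{i+1}+\mu_i$; this is where the interleaving inequality $\nu_i\ge\theta_i\ge\nu_{i+1}$ naturally appears, and it matches the shape of the formula since $\theta_i$ mixes the $i$-th block of $\mu$ with the $(i+1)$-st block of $\lambda$.

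To make the bookkeeping clean I would order the summands $V_i$ so that $\lambda_i$ is weakly decreasing (which, together with $\mu$ decreasing, is consistent since $\lambda$ and $\mu$ are both partitions), and then track how, in passing from $V$ to $V/\sk[t]v$, each Jordan block of size $\nu_i$ loses a string of length equal to the part of $v$ it contributes; the surviving pieces reassemble, after the index shift visible in~\eqref{pc1}, into blocks of sizes $\lambda_{i+1}+\mu_i$. Finally, since all of this was computed for one representative $(u,v)$ of the orbit, I would invoke $\GL(V)$-equivariance: the Jordan type of $u$ and the Jordan type of $u$ acting on $V/\langle v,uv,\ldots\rangle$ are both conjugation-invariant, hence constant along the orbit, so the conclusion holds for every $(u,v)$ in it. Conversely, given $(u,v)$ with $u$ of type $\nu$ and $u|_{V/\langle v,uv,\ldots\rangle}$ of type $\theta$, Theorem~\ref{Thm1} assigns it some $(\lambda',\mu')$, the forward computation shows the associated partition is $\theta$ as well, and injectivity of $\Upsilon$ (equivalently, that~\eqref{pc1}, \eqref{pc2} are mutually inverse) forces $(\lambda',\mu')=\Xi(\nu,\theta)$.

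The main obstacle I anticipate is purely combinatorial: verifying that the multiset of Jordan block sizes of $u$ on $V/\sk[t]v$ is exactly $\{\lambda_{i+1}+\mu_i\}_i$ requires care about which block each basis vector $e_{i,\lambda_i-m}$ lands in after quotienting, especially when some $\lambda_i$ or $\mu_i$ vanish or when several $\nu_i$ coincide, so that the ``shift by one row'' in~\eqref{pc1} is correctly justified rather than merely plausible; once that identity is nailed down, the rest is formal.
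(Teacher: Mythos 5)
The paper records no proof of this corollary: it is stated to be ``obvious from the construction,'' so there is no argument to compare yours against line by line. Your overall strategy --- compute the Jordan type of $u|_{V/\sk[t]v}$ on the explicit representative $v=\sum_i e_{i,\lambda_i}$ from the proof of Theorem~\ref{Thm1}, then invoke $\GL(V)$-equivariance and the invertibility of $\Upsilon$ --- is the natural way to make ``obvious'' precise, and the first and last paragraphs of your sketch are fine.

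However, the central structural claim in your computation is false. You assert that $\sk[t]v=\langle v,uv,\ldots\rangle$ ``has the form $\bigoplus_i u^{\lambda_i-r_i}V_i$'' and hence that the quotient is a direct sum of cyclic modules whose dimensions can be read off. But $\sk[t]v$ is a \emph{cyclic}, hence indecomposable, $\sk[t]$-module of length $\lambda_1$, spanned by the diagonal vectors $u^m v=\sum_i e_{i,\lambda_i-m}$; it is not a coordinate subspace $\bigoplus_i(\textrm{piece of }V_i)$, and in general it is not even a direct summand of $V$. For example, with $\nu=(3,2)$, $\lambda=(2,1)$, $\mu=(1,1)$, one has $\sk[t]v=\operatorname{span}(e_{1,2}+e_{2,1},\,e_{1,1})$, which is none of the subspaces $u^{a_1}V_1\oplus u^{a_2}V_2$; and $V/\sk[t]v$ has type $(2,1)=\theta$, whereas if $\sk[t]v$ were a length-$2$ direct summand of a type-$(3,2)$ module the quotient would have to have type $(3)$. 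So the step ``each Jordan block of size $\nu_i$ loses a string of length $\lambda_i$ and the pieces reassemble'' is not merely bookkeeping that requires care --- as stated it describes a decomposition that does not exist. To carry out the computation honestly you need either a genuine change of Jordan basis adapted to the diagonal submodule $\sk[t]v$, or a rank computation: show directly that $\dim\bigl((u^{j-1}V+\sk[t]v)/(u^{j}V+\sk[t]v)\bigr)=\tilde\theta_j$ for all $j$, using that $\sk[t]v\cap u^m V$ is a $u$-stable subspace of the cyclic module $\sk[t]v$, hence of the form $u^{s(m)}\sk[t]v$, and tracking the threshold $s(m)$ in terms of $\lambda$ and $\mu$. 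Until that is done, the ``forward computation'' your last paragraph relies on is not established.
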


\noindent{\it Proof\/} is obvious from the construction. \qed

\section{$\GL(V)$-orbits in $\Fl(V)\times\Fl(V)\times V$}

\subsection{}
Let $(F_1,F_2,v)\in\Fl(V)\times\Fl(V)\times V$. Consider the orbit
$\GL(V)\cdot (F_1,F_2,v)$. If $v=0$ then this orbit lies in
$\Fl(V)\times\Fl(V)\times\{0\}$. Such orbits can be parametrized by
permutations of $N$ elements. Otherwise, if $v\ne 0$ the orbit is preimage
of an orbit in $\Fl(V)\times\Fl(V)\times\PP(V)$. This follows from the fact
that if $c\in \sk^\times$ then the element $(F_1,F_2,cv)$ can be
obtained from    $(F_1,F_2,v)$ by the action of the scalar operator
$c \cdot \mathrm{id}\in \GL(V)$. Such orbits are in one-to-one correspondence
with pairs $(w,\sigma)$ where $w\in \gS_N$ is a permutation and $\sigma$ is
non-empty, decreasing  subsequence of $w$ (see~\cite{Sol,MWZ}).
So $\GL(V)$-orbits in
$\Fl(V)\times\Fl(V)\times V$ are indexed by pairs $(w,\sigma)$ where
$w\in \gS_N$ and $\sigma$ is a decreasing subsequence of $w$ (possibly empty).
We will give another proof of this fact in the following lemma.

\begin{lem}
\label{1}
 There is a one-to-one correspondence between $\GL(V)$-orbits in
$\Fl(V)\times\Fl(V)\times V$ and pairs $(w,\sigma)$ where
$w\in \gS_N$ and $\sigma\subset\{1,2,\dots,N\}$ such that if
$i,j\in\sigma$ and $i<j$ then $w(i)>w(j)$. These orbits can be also
indexed by pairs $(w,\beta)$ where $\beta\subset\{1,2,\dots,N\}$ is
a subset such that if
$i\in \{1,\dots,N\}\setminus\beta$ and $j\in\beta$ then either $i>j$
or $w(i)>w(j)$.
\end{lem}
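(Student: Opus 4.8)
The plan is to realize the correspondence geometrically, mirroring the argument for Theorem~\ref{Thm1}. Fix $v\in V$ nonzero; by the scaling remark above it suffices to understand $\GL(V)$-orbits together with the choice of $v$, i.e.\ $P_N$-orbits on $\Fl(V)\times\Fl(V)$, plus the separate locus $v=0$. First I would parametrize $\GL(V)$-orbits on $\Fl(V)\times\Fl(V)$ in the standard Bruhat way: choose $F_2$ to be the standard flag with stabilizer the Borel $B$, so that orbits correspond to $B$-orbits on $\Fl(V)$, i.e.\ to $w\in\gS_N$ via the relative position of $F_1$ and $F_2$. Concretely, pick a basis $f_1,\dots,f_N$ adapted to $F_2$ (so $F_{2,k}=\langle f_1,\dots,f_k\rangle$) and, for a given $w$, a basis $e_1,\dots,e_N$ adapted to $F_1$ with $F_{1,k}=\langle e_1,\dots,e_k\rangle$, chosen so that $e_i\in F_{2,w(i)}\setminus F_{2,w(i)-1}$; such adapted bases exist and are unique up to the action of the stabilizer.

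Next I would pin down the remaining freedom in $v$. Having fixed $w$ and hence the adapted bases, the joint stabilizer of $(F_1,F_2)$ in $\GL(V)$ still acts on $V$, and I claim its orbits on $V$ are in bijection with the subsets $\sigma\subseteq\{1,\dots,N\}$ that are ``$w$-decreasing'' in the sense of the statement. The key point is an echelon/normal-form argument: any $v$ can be written in the $e$-basis, and by subtracting multiples of $e_i$ coming from the unipotent part of the stabilizer one can clear coordinates, arriving at a canonical $v=\sum_{i\in\sigma}e_i$ for a unique $\sigma$. The constraint that $\sigma$ be $w$-decreasing is exactly the obstruction to further reduction: if $i<j$ are both in $\sigma$ but $w(i)<w(j)$, then $e_j$ can be used to eliminate $e_i$ from $v$ (an elementary operation lying in the stabilizer, since it preserves both flags), so $i\notin\sigma$ after all; conversely when $\sigma$ is $w$-decreasing no such reduction is available, and I would check this by writing down generators of the stabilizer Lie algebra and verifying they act trivially (mod $\langle e_i:i\in\sigma\rangle$) on $\sum_{i\in\sigma}e_i$. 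That $v=0$ corresponds to $\sigma=\varnothing$ is immediate, and merges with the $v\neq0$ case because $\varnothing$ is vacuously $w$-decreasing.

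For the reindexing by $(w,\beta)$ I would give an explicit bijection between $w$-decreasing subsets $\sigma$ and subsets $\beta$ satisfying the stated ``staircase'' condition, for fixed $w$. The natural candidate is a closure-type operation: let $\beta=\beta(\sigma)$ be the set of $j$ such that there exists $i\in\sigma$ with $i\geq j$ and $w(i)\geq w(j)$ (equivalently, $\beta$ is the ``lower-left hull'' of $\sigma$ in the permutation matrix). One checks that $\beta$ satisfies the condition in the lemma—if $j\in\beta$ and $i\notin\beta$ then neither $i\geq j$ nor $w(i)\geq w(j)$ can hold with the roles set up to propagate membership, so $i>j$ or $w(i)>w(j)$ fails only when $i<j$ and $w(i)<w(j)$, which is excluded—and that $\sigma\mapsto\beta$ is a bijection onto such subsets, with $\sigma$ recovered from $\beta$ as its set of ``minimal corners.'' I expect the main obstacle to be the second half of the stabilizer-orbit claim, namely proving that distinct $w$-decreasing $\sigma$ give distinct orbits (that no further reduction is possible); this is the place where one must actually analyze the stabilizer and cannot merely invoke a dimension or density count, although Lemma~\ref{0} applied to the algebra $A=\operatorname{Lie}$-stabilizer-plus-identity could also be used to package the finiteness and the classification once the submodule structure is identified. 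The $(w,\beta)$ reindexing is then a purely combinatorial repackaging with no geometric content.
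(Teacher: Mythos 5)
Your overall strategy mirrors the paper's: parametrize $\GL(V)$-orbits on $\Fl(V)\times\Fl(V)$ by $w\in\gS_N$, fix a representative pair $(F_1,F_2)\in\Omega_w$ with adapted basis $\{e_i\}$, and reduce to classifying orbits of the stabilizer $H$ on $V$. Your $\sigma\leftrightarrow\beta$ dictionary is also correct and matches the paper's: $\beta$ is the down-set generated by $\sigma$ for the partial order $i\preceq j\iff(i\le j\text{ and }w(i)\le w(j))$, and conversely $\sigma$ is recovered from $\beta$ as its set of $\preceq$-maximal elements, which is exactly formula~\eqref{sigma}.

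The place where you diverge, and where a genuine gap sits, is the classification of $H$-orbits on $V$. The paper observes $H=A^\times$ for the associative algebra $A=A_1\cap A_2=\bigoplus_{i\le i',\,w(i)\le w(i')}\sk E_{i,i'}$ of endomorphisms preserving both flags, and then applies Lemma~\ref{0} directly: $A^\times$-orbits on $V$ biject with $A$-submodules, and these are precisely the $S(\beta)=\bigoplus_{i\in\beta}\sk e_i$ with $\beta$ a down-set. This packages existence of a canonical representative and injectivity simultaneously, because the submodule $Av$ is a complete orbit invariant. Your echelon reduction shows every $v$ can be moved to some $\sum_{i\in\sigma}e_i$, but the injectivity check you sketch---a Lie-algebra computation verifying that no further reduction is possible---does not by itself show that distinct $\sigma$ give distinct orbits; for that you need an actual invariant, and $Av$ is the one that works. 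You do flag this issue and name Lemma~\ref{0} as a way to close it, which is indeed the clean route and is exactly what the paper does; it is better to commit to it from the outset rather than treat it as a fallback. (Minor remark: your representative $v=\sum_{i\in\sigma}e_i$ is the convention of~\cite{MWZ}; the paper's footnote after the proof notes that it uses $v=\sum_{i\in\beta}e_i$ instead, and both generate the same $A$-submodule $S(\beta)$.)
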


We denote by $RB$ the set of such pairs $(w,\beta)$.
We think of elements of $RB$ as of words colored in two colors: red and blue.
Namely, if $(w,\beta)\in RB$ we consider the word $w(1)\dots w(N)$ and
paint $w(i)$ in blue if $i\in \beta$, and we paint it in
red if $i\not\in \beta$.

\bigskip

\begin{proof}
For each $w\in\gS_N$ let $\Omega_w$ be the corresponding $\GL(V)$-orbit in
$\Fl(V)\times\Fl(V)$. Namely,
$(F_1,F_2)\in\Omega_w$ iff there exists a basis $\{e_i\}$ of $V$ such that
\begin{gather}   \label{F1} F_{1,i}=\langle e_1,\dots,e_i\rangle\\
               \label{F2} F_{2,j}=\langle e_{w^{-1} (1)},\dots,e_{w^{-1} (j)}\rangle
\end{gather}
Consider all the $\GL(V)$-orbits in $\Fl(V)\times\Fl(V)\times V$ consisting of
such points $(F_1,F_2,v)$ that
$(F_1,F_2)\in\Omega_w$ where $w$ is fixed.  Fix a pair $(F_1,F_2)\in\Omega_w$
and let $H$ be its
stabilizer in $\GL(V)$. Then these orbits correspond to $H$-orbits in $V$. Let
$A_k\subset\End(V)\ (k=1,2)$ be
the subalgebra defined by
$$a\in A_k \iff  \forall i\quad a(F_{k,i})\subset F_{k,i}. $$
Denote $A=A_1\cap A_2$. Then $H=A^\times$ and we can apply Lemma~\ref{0}.

 Let $\{e_i\}$ be a basis satisfying~\eqref{F1}   and~\eqref{F2} and   $E_{i,j}$
the operator given by
\eql{Eij}{E_{i,j}e_{j'}=\delta_{j,j'}e_i.} Then $$A=\bigoplus_{\substack{i\le i'\\w(i)\le
w(i')}} \sk E_{i,i'}.$$ Now it is easy to see
that  all the $A$-submodules in $V$ have the form $S(\beta):=
\bigoplus_{i\in\beta} \sk e_i$ where $\beta$  satisfies
the condition of the lemma. So, applying Lemma~\ref{0} proves the second part of
the lemma. We will denote by
$\Omega_{w,\beta}$ the orbit in $\Fl(V)\times\Fl(V)\times V$ corresponding to
$(w,\beta)$.

For each $(w,\beta)\in RB$ let
\eql{sigma}{
\s= \s(\tw)= \{i\in\beta\mid \forall j\ (j>i)\,\,\&\,{(w(j)> w(i))} \Longrightarrow
j\not\in\beta\}.}
It is easy to see that $(w,\beta)$ and $(w,\sigma)$
can be reconstructed from each other.
So the lemma is proved. \end{proof}

Note that $\Omega_{w,\beta}$ consists of such triples $(F_1,F_2,v)$ that there
exists a basis $\{e_i\}$ satisfying~\eqref{F1},~\eqref{F2} and
such that\footnote{This formula is different from the one in~\cite{MWZ}:
$ v=\sum_{i\in\s}e_i.$}
\begin{equation}\label{v} v=\sum_{i\in\b}e_i. \end{equation}

\subsection{$X,Y,Z$ and two-sided microlocal cells}
\label{XYZ}
We denote $\Fl(V)\times\Fl(V)\times V$ by $X$,
and consider the cotangent bundle $T^*X$. It can be  described as the
variety of sextuples $(F_1,F_2,v,u_1,u_2,v^*)\in T^*(X)$
where $(F_1,F_2,v)\in X$, $u_i$ $(i=1,2)$ are nilpotent operators on $V$,
$u_i$ preserves $F_i$  and $v^*\in V^*$. The moment map
$T^*X\to\gl(V)^*\cong\gl(V)$ sends a point $(F_1,F_2,v,u_1,u_2,v^*)\in T^*(X)$
to the sum
$u_1+u_2+v\otimes v^*$. The preimage  $Y$ of $0$ under
this map is the union of conormal bundles of
$\GL(V)$-orbits in $X$. So all the irreducible components of $Y$
have the form $Y_{w,\sigma}=\overline{N^*\Omega_{w,\sigma}}$.
We determine the type of $u_1$ for a general point of $N^*\Omega_{w,\sigma}$.

Now consider the projection $\pi\colon Y\to\Fl(V)\times
V\times\N\times\N\times V^*$,
$(F_1,F_2,v,u_1,u_2,v^*)\longmapsto(F_2,v,u_1,u_2,v^*)$. Let
$\tilde Y=\pi(Y)$. The preimage of a point $(F_2,v,u_1,u_2,v^*)\in
\tilde Y$ is isomorphic to the variety $\Fl_{u_1}(V)$ of full flags
 fixed by $u_1$. This variety is known to be pure-dimensional and the set of
its irreducible components can be identified with the set $\St(\l)$ of
standard tableaux of the shape $\lambda$ where $\lambda$ is the type of
$u_1$. Namely, for each $T\in\St(\l)$ the corresponding irreducible
component $\Fl_{u_1,T}$ of $\Fl_{u_1}(V)$ is defined as follows.
Let $\l^{(i)}(T)$ be the shape of the subtableau of $T$ formed by numbers
$1,\dots,i$. Then $\Fl_{u_1,T}$ is the closure of the set $\Fl_{u_1}^T$ of
all $F\in\Fl_{u_1}(V)$ such that $u_1|_{F_i}$ has the type $\l^{(i)}(T)$.

Let $Z$ be the variety of quadruples
 $(u_1,u_2,v,v^*)$, where  $(u_1,u_2)\in \N$, $v\in V$, $v^*\in V^*$ and
$u_1+u_2+v\otimes v^*=0$.
Then we have a projection $\pi:Y\to Z$. We say that $\tw, \tw'\in RB$
belong to the same two-sided microlocal cell if
$\pi(Y_{\tw})=\pi(Y_{\tw'})$. We denote by $\fP$ the set of
pairs of partitions $(\nu,\theta)$ such that
$|\nu|=N$ and $\nu_i\ge\theta_i\ge\nu_{i+1}$ for each $i\ge 1$.
Further, denote by $\bT$ the set of triples
of partitions $(\nu,\theta,\nu')$ such that $(\nu,\theta)\in\fP$
and $(\nu',\theta)\in \fP$. For any $\bt=(\nu,\theta,\nu')\in\bT$
denote by $Z^\bt$ the set of quadruples $(u_1,u_2,v,v^*)\in Z$
such that the types of $u_1,u_2$ and $u_1|_{V/k[u_1]v}$
are equal to $\nu,\nu'$ and $\theta$
respectively (it is easy to check that for each quadruple
$(u_1,u_2,v,v^*)\in Z$ we have $\sk[u_1]v=\sk[u_2]v$ and
 $u_1|_{V/\sk[u_1]v}= u_2|_{V/\sk[u_1]v}$).

\subsection{}
\label{mir}
We fix $\tw\in RB$. Let $y$ be a general point of variety
$Y_{\tw}={\overline {N^*(\owt{})}}$.
We take $\bt=\bt(\tw)=(\nu,\theta,\nu')\in\bT$
such that $\pi(y)\in Z^{\bt}$. We consider the standard Young tableaux
$T_1=T_1(\tw)\in\St(\nu)$ and
$T_2=T_2(\tw)\in\St(\nu')$ such that $F_i(y)\in\Fl_{u_i,T_i}$   $(i=1,2).$

\begin{prop}\label{N}
The map $\tw\mapsto(\bt(\tw),T_1(\tw),T_2(\tw))$ realizes a
one-to-one correspondence between $RB$ and the set of triples
$(\bt,T_1,T_2)$ such that $\bt=(\nu,\theta,\nu')\in\bT$,
$T_1\in\St(\nu), T_2\in\St(\nu')$. Moreover $\tw$
and $\tw'$ belong to the same two-sided
microlocal cell iff $\bt(\tw)=\bt(\tw')$.
\end {prop}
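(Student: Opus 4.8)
The plan is to prove the two assertions separately: the bijection statement, and the characterization of two-sided microlocal cells. For the bijection, the key observation is that the datum $\tw \mapsto (\bt(\tw), T_1(\tw), T_2(\tw))$ is essentially the ``mirabolic RSK'' correspondence promised in the introduction, so I would construct the inverse map explicitly. Given a triple $(\bt, T_1, T_2)$ with $\bt = (\nu,\theta,\nu') \in \bT$ and $T_i \in \St(\nu), \St(\nu')$, I would build a representative sextuple in $T^*X$: choose $u_1$ nilpotent of type $\nu$, $u_2$ of type $\nu'$, a vector $v$ with $\sk[u_1]v = \sk[u_2]v$ cyclic of the appropriate size so that $u_1|_{V/\sk[u_1]v}$ has type $\theta$ (this is exactly the data controlled by Corollary~\ref{hor} and Proposition-Construction~\ref{Thm2}), and $v^* = -$ the appropriate functional so that $u_1 + u_2 + v\otimes v^* = 0$. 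Then choose flags $F_1 \in \Fl_{u_1}^{T_1}$, $F_2 \in \Fl_{u_2}^{T_2}$ in the respective open cells. One checks that the resulting point $(F_1, F_2, v, u_1, u_2, v^*)$ lies in $N^*\Omega_{w,\beta}$ for a unique $(w,\beta) \in RB$, and that the assignment is inverse to $\tw \mapsto (\bt(\tw), T_1(\tw), T_2(\tw))$. The well-definedness — that a general point of $Y_{\tw}$ gives the same triple regardless of choices — follows because $Y_{\tw}$ is irreducible (being the closure of a conormal bundle) and the loci defined by fixed $\bt$ and fixed $(T_1, T_2)$ are locally closed, so exactly one combination is dense in $Y_{\tw}$.

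For the dimension count that makes the bijection work, I would compare $\#RB$ with the number of triples: both sides should equal $\sum_{\bt \in \bT} \#\St(\nu)\cdot\#\St(\nu')$, and since $\bt \mapsto (\lambda,\mu) = \Xi(\nu,\theta)$ together with $\bt \mapsto (\lambda',\mu') = \Xi(\nu',\theta)$ encodes $\bt$ as a pair of partitions of total size $N$ sharing the $\mu$-part in a constrained way, the classical RSK-type identity $\#\gS_N = \sum_{|\lambda|=N}(\#\St(\lambda))^2$ should generalize; alternatively one sidesteps the count entirely by exhibiting mutually inverse maps as above, which is cleaner.

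For the cell statement, the forward direction is immediate: if $\bt(\tw) = \bt(\tw')$ then by definition $\pi(Y_{\tw})$ and $\pi(Y_{\tw'})$ both lie in (and, being irreducible of the right dimension, equal) $\overline{Z^\bt}$, hence coincide. For the converse, suppose $\pi(Y_{\tw}) = \pi(Y_{\tw'})$. A general point of this common image lies in a unique $Z^\bt$ (the stratification of $Z$ by the types of $u_1, u_2$, and $u_1|_{V/\sk[u_1]v}$ is a stratification into locally closed pieces, so a general point of any irreducible subvariety sits in exactly one stratum), and by construction $\bt(\tw)$ and $\bt(\tw')$ are each computed from a general point of this image via its $\pi$-fiber; hence $\bt(\tw) = \bt(\tw')$. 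The one point needing care here, and the place I expect to do real work, is that $\pi(Y_{\tw})$ is irreducible (clear, image of irreducible $Y_{\tw}$) \emph{and} that its general point has a $\pi$-fiber meeting the open cells $\Fl_{u_1}^{T_1}, \Fl_{u_2}^{T_2}$ — i.e.\ that $\pi$ restricted to $Y_{\tw}$ is dominant onto $\pi(Y_{\tw})$ with fibers generically as described — so that the tableau and partition data are genuinely read off from the generic $\pi$-fiber and not lost. This amounts to checking that $Y_{\tw}$ is not contained in a proper $\pi$-preimage, which I would verify by a dimension count: $\dim Y_{\tw} = \dim X = \dim(\Fl_{u_1}(V)) + \dim Z^\bt$ up to the flag-variety fiber dimensions, matching $\dim \overline{Z^\bt} + \dim \Fl_{u_1} + \dim\Fl_{u_2}$ minus the overlap, so the generic fiber of $\pi|_{Y_{\tw}}$ has the expected dimension $\dim\Fl_{u_1,T_1}$.
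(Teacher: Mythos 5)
Your strategy—construct an explicit inverse, show well-definedness, argue the cell statement from properness of $\pi$—is a reasonable framing but, when pushed to rigor, requires exactly the two technical facts that do the real work in the paper, and your proposal gestures at both without proving either.

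The first and central gap is the irreducibility of the locus $Y^{\bt,T_1,T_2} := \{y\in Y : \pi(y)\in Z^{\bt},\ F_i(y)\in\Fl_{u_i,T_i}\}$. Your inverse construction chooses $u_1,u_2,v,v^*$ of the prescribed types and flags $F_i\in\Fl^{T_i}_{u_i}$, and asserts the resulting point ``lies in $N^*\Omega_{w,\beta}$ for a unique $(w,\beta)$'' independent of the choices. But different choices of the nilpotents and flags sweep out all of $Y^{\bt,T_1,T_2}$, and the assignment to $RB$ is well-defined only if this entire locus lies in a single irreducible component of $Y$. This is equivalent to $Y^{\bt,T_1,T_2}$ being irreducible, and that is precisely the non-trivial content. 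The paper proves it by factoring the projection as $Y^{\bt,T_1,T_2}\to Z^{\bt}\to\mathcal O_{\nu,\theta}$: the first map has irreducible fibers $\Fl_{u_1,T_1}\times\Fl_{u_2,T_2}$ of constant dimension, the second is a homogeneous bundle over the (irreducible) orbit $\mathcal O_{\nu,\theta}$ with fibers that are open subsets of affine subspaces of $V^*$. You do not carry out any version of this step; without it, the ``inverse map'' is not a function, and injectivity/surjectivity of the forward map is unproven.

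The second gap is the numerology that turns ``each $Y^{\bt,T_1,T_2}$ is irreducible'' into ``each $\overline{Y^{\bt,T_1,T_2}}$ is a component of $Y$.'' You mention a dimension comparison $\dim Y^{\bt,T_1,T_2}=\dim Y$ and a cardinality identity generalizing $\#\gS_N=\sum_{|\lambda|=N}(\#\St(\lambda))^2$, then say you would ``sidestep the count entirely by exhibiting mutually inverse maps.'' This is circular: the mutually-inverse-maps argument is exactly what you cannot complete without one of these counts. The paper settles it cleanly by invoking formula~\eqref{f3} from Proposition~\ref{algH}, which shows the number of triples $(\bt,T_1,T_2)$ equals $\dim\alg{\R}=\#RB$, i.e.\ the number of irreducible components of $Y$; together with pairwise disjointness and irreducibility of the $Y^{\bt,T_1,T_2}$ this forces each to be open dense in a component. (Remark~\ref{alt} offers the direct dimension computation as an alternative, which is closer to what you sketch in your last paragraph, but you only state that you ``expect to do real work'' there.) Your argument for the microlocal-cell statement is essentially correct, but it sits on top of the bijection and the surjectivity of $Y^{\bt,T_1,T_2}\to Z^{\bt}$, so it inherits the same gap.
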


\begin{proof}

Denote by $Y^{\bt,T_1,T_2}$ the set of points $y\in Y$
such that $\pi(y)\in Z^\bt$
and $F_i(y)\in\Fl_{u_i,T_i}(V)$. These sets are locally closed,
disjoint, and $Y$ is their union.  We claim that  all of them are
open subsets of irreducible components of~$Y$. We will use the
formula~\eqref{f3} (see page~\pageref{f3}) whose proof does not use
the proposition we are proving. (See also Remark~\ref{alt} below.)
Note that the number of the sets $Y^{\bt,T_1,T_2}$
coincides with the number of irreducible components of $Y$.
This follows from the fact that the number of these sets is equal
to the rank of the right hand side of
the formula~\eqref{f3}, and this rank coincides with the cardinality
of $RB$, i.e. with the number  of irreducible components  of $Y$.
Therefore, if all these sets are irreducible then their closures must
be irreducible components of $U$. In this case  we obtain a bijection
of required form. Hence it is enough to prove that the sets
$Y^{\bt,T_1,T_2}$ are irreducible. Note that all the fibers
of the projection $Y^{\bt,Y_1,T_2}\to Z^\bt$ have the form
$\Fl_{u_1,T_1(\tw)}\times\Fl_{u_2,T_2(\tw)}$. It means they are irreducible
and have the same dimension. So it is enough to prove that $Z^\bt$
is irreducible.

 Let $\bt=(\nu,\theta,\nu')$. Let $\Olm$ be an orbit in $\N\times\ V$
corresponding to the pair $(\nu,\theta)$, i.e.  the set of all
$(u,v)\in \N\times V$ such that the
type  of $u$ is equal to $\nu$ and the type of
$u|_{V/(\sk[u]\cdot v)}$ is equal
to~$\theta$. We have the natural pojection $Z^{\bt}\to\Olm$.
The fiber of this map over
a point $(u,v)$ is isomorfic to the set of $v^*\in V^*$ such that
$u+v\otimes v^*\in\Ol$ where $\Ol\subset\N$. One can  check that this subset
is an open subset of an affine subspace of $V^*$. So the fibers of
this projection are irreducible. Besides, this bundle is homogeneous.
Since the orbit $\Olm$ is irreducible,
we obtain that $Z^\bt$ is irreducible.
\end{proof}

\begin{Rem}\label{alt} Instead of using the formula~\eqref{f3},
one can directly
compute the dimension of the sets $Y^{\bt,T_1,T_2}$, showing that
$\dim Y^{\bt,T_1,T_2} =\dim Y$, which amounts to proving the equation
$$
  \dim Z^\bt = N^2-n(\nu)-n(\nu')
$$
where $\bt=(\nu,\theta,\nu')$, and $n(\nu)=\sum_{i\ge1} (i-1)\nu_i$.
\end{Rem}

\subsection{Notation}\label{RSKmir}
We will call the map $\tw\mapsto(\bt(\tw),T_1(\tw),T_2(\tw))$
constructed in~\ref{mir} {\em the mirabolic RSK correspondence}
and denote it by $\RSK$.

\subsection{The description of mirabolic RSK correspondence}\label{RSK2}
We are going to give a combinatorial description of mirabolic
RSK correspondence defined in Proposition~\ref{N}.
Let $\tw=(w,\b)\in RB$. We will construct step by step a
standard Young tableau. Besides we will need a separate
row of infinite length (denote it by $r^@$) consisting originally from
the symbols ``@''. We assume that
``@'' is greater than all the numbers from $1$ to $N$.

We will run next procedure successively for $i=1,2,\ldots,N$ :

1a. If $i\in\b $ then insert $w(i)$ into the tableau $T^@$ (originally empty)
according to the standard row bumping rule of
the RSK algorithm described in~\cite{Fulton} ( The tableau $T^@$ changes as
the next element is inserted).

1b. If $i\notin\b $ then insert first $w(i)$ into $r^@$ instead of the least
element greater than $w(i)$,
and then insert the element removed from $r^@$ by replacing into tableau $T^@$
via row bumping algorithm~(see~\cite{Fulton}.)

2. After all the elements $w(1),\ldots,w(N)$ are inserted, we should
insert the elements of $r^@$ successively via
standard row bumping algorithm.

3a. After that we construct $T_2(\tw)$ from the tableau $T^@$ by
throwing out all the symbols ``@''.

3b. $T_1(\tw)$ is defined as the standard tableau where number
$i$, $1\le i\le N$ stands in the box that was added into $T^@$ at the $i$-th step 1a--1b.
(No boxes created in step~2 are included in $T_2(\tw)$.)

3c. Finally, $\bt(\tw)=(\nu,\theta,\nu')$ where $\nu=\Sh(T_1(\tw))$;
\quad $\nu'=\Sh(T_2(\tw))$;\quad $\theta=(\Sh(T^@_*))_-$ \quad and we have
denoted
\begin{itemize}

\item    $\Sh$ -- the operation of taking the shape of a tableau;

\item    $()_-$ --  the operation of removing of the first part of a partition;

\item    $T^@_*$ -- the tableau $T^@$ obtained at the last step of the algorithm.

\end{itemize}

\bigskip

Let us illustrate the above construction by the following example.

\subsection{An example}\label{example}
Let $N= 10$,               $w={7,2,5,1,6,9,4,8,10,3}$;
$\b=\{1,2,3,4,7\}$.
The tableaux $T^@$ and the row $r^@$ obtained at the $i$-th step of the algorithm 
will be denoted by  $T^@_i$ and  $r^@_i$, respectively.  The steps of the mirabolic
RSK algorithm are shown on Fig.~\ref{fig:RSK}.

\begin{figure}
\parbox{0.9\textwidth}{\noindent\flushleft
\begin{tabular}{rl>{$}p{11em}<{$}@{\qquad\qquad}lllllll}
1.& $T^@_1=$ & \young{7\cr} & $r^@_1=\young{@&@&{\ldots}} $  \\[\medskipamount]
2.& $T^@_2=$ & \young{2\cr 7 \cr }
& $r^@_2=\young{@&@&{\ldots}} $  \\[.7cm]
3.& $T^@_3=$ & \young{2&5\cr7  \cr }
& $r^@_3=\young{@&@&{\ldots}} $  \\[.7cm]
4.& $T^@_4=$ & \young{1&5\cr2\cr7  \cr }
& $r^@_4=\young{@&@&{\ldots}} $  \\[1.2cm]
5.& $T^@_5=$ & \young{1&5&@\cr2\cr7  \cr }
& $r^@_5=\young{6&@&@&{\ldots}} $  \\[1.2cm]
6.& $T^@_6=$ & \young{1&5&@&@\cr2\cr7  \cr }
& $r^@_6=\young{6&9&@&@&{\ldots}} $  \\[1.2cm]
7.& $T^@_7=$ & \young{1&4&@&@\cr2&5\cr7  \cr }
& $r^@_7=\young{6&9&@&@&{\ldots}} $  \\[1.2cm]
8.& $T^@_8=$ & \young{1&4&9&@\cr2&5&@\cr7  \cr }
& $r^@_8=\young{6&8&@&@&{\ldots}} $  \\[1.2cm]
9.& $T^@_9=$ & \young{1&4&9&@&@\cr2&5&@\cr7  \cr }
& $r^@_9=\young{6&8&{10}&@&@&{\ldots}} $  \\[1.2cm]
10.& $T^@_{10}=$ & \young{1&4&6&@&@\cr2&5&9\cr7&@
\cr } & $r^@_{10}=\young{3&8&{10}&@&@&{\ldots}} $
\end{tabular}

\ncmd\syoung{\rightsquigarrow\young}

\nopagebreak\vspace{.3cm}
\noindent
\begin{tabular}{rl@{\quad}l@{\quad}l@{\quad}ll}
11.&$ \young{   1&3&6&@&@\cr 2&4&9 \cr 5&@ \cr 7 \cr }
\syoung{1&3&6&8&@\cr 2&4&9&@ \cr 5&@ \cr 7 \cr }
\syoung{   1&3&6&8&10\cr 2&4&9&@&@ \cr 5&@ \cr 7 \cr }
\syoung{ 1&3&6&8&10&@&@&\dots\cr 2&4&9&@&@ \cr 5&@ \cr 7 \cr }
={T^@_*}$
\end{tabular}

\nopagebreak\vspace{.3cm}
\noindent
\begin{tabular}{rrl@{\qquad\qquad}rl}
12.&$T_1(\tw)=$& \young{ 1&3&5&6&9\cr 2&7&8 \cr
4&10  \cr }
 &$T_2(\tw)=$ &\young{ 1&3&6&8&10\cr 2&4&9 \cr 5
  \cr 7  \cr }
\end{tabular}
}\caption{An example of application of the mirabolic RSK algorithm}\label{fig:RSK}
\end{figure}

As result we have
{$\nu=\Sh(T_1(\tw))=(5,3,2);\quad  \nu'=\Sh(T_2(\tw))=(5,3,1,1);
\quad\theta=(\Sh(T^@_*))_-=(\infty,5,2,1)_-=(5,2,1). $}

\begin{thm}\label{fthm}
For any $\tw\in RB$ the triple $(\bt(\tw),T_1(\tw),T_2(\tw))$
obtained by the mirabolic algorithm described in~\ref{RSK2}
coincides with the triple $(\bt(\tw),T_1,T_2)$ defined in~\ref{mir}.
\end{thm}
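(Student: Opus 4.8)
The plan is to prove the combinatorial and geometric descriptions of the mirabolic RSK correspondence agree by reducing everything to the classical RSK correspondence and its known geometric interpretation via conormal varieties for $\GL(V)$-orbits in $\Fl(V)\times\Fl(V)$. First I would reinterpret the combinatorial algorithm of~\ref{RSK2} as an \emph{ordinary} RSK computation. The trick is the auxiliary row $r^@$: inserting a red letter $w(i)$ (the case $i\notin\b$) into $r^@$ and then bumping the displaced symbol into $T^@$ is, after clearing the ``@'' symbols at the very end (steps~2 and~3a), exactly the insertion tableau one gets by first processing all the blue letters, then the red letters \emph{in a suitably reordered block}. More precisely, I claim the final tableau $T^@_*$ with ``@'' removed is the classical insertion tableau $P(\hat w)$ of a certain word $\hat w$ on the alphabet $\{1,\dots,N\}\cup\{\text{large symbols}\}$ built canonically from $(w,\b)$, and $T_2(\tw)$ is its restriction. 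Likewise $T_1(\tw)$ is the recording tableau of the subword of blue positions. So step one is a purely combinatorial lemma identifying the output of~\ref{RSK2} with classical $P,Q$-tableaux of an explicit enlarged word.

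Next I would bring in the geometry. By Lemma~\ref{1} and the description~\eqref{v}, an orbit $\Omega_{w,\b}$ carries a basis $\{e_i\}$ adapted to the two flags and with $v=\sum_{i\in\b}e_i$. The conormal variety $Y_{\tw}$ parametrizes generic nilpotents $u_1,u_2$ with $u_1+u_2+v\otimes v^*=0$, $u_i$ preserving $F_i$. The established theory for $\Fl(V)\times\Fl(V)$ (the result of Steinberg/Spaltenstein recalled via~\cite{S} in the introduction) says that for generic $(u_1,u_2)$ with $u_1+u_2=0$ adapted to flags in relative position $w$, the Jordan type is $\Sh(P(w))$ and the flag $F_i$ lies in the component $\Fl_{u_i,T_i}$ indexed by the recording/insertion tableaux of $w$. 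The content of Theorem~\ref{fthm} is the mirabolic modification: the rank-one correction $v\otimes v^*$ changes $u_1$ from a type-$\Sh(P(w))$ nilpotent to a type-$\nu$ nilpotent, and I would track precisely how a generic such correction acts. Here I would use Corollary~\ref{hor}: modulo $\sk[u_1]v$, the operators $u_1$ and $u_2$ agree and have type $\theta$, so the ``interesting'' data is concentrated on the cyclic subspace $\sk[u_1]v$. The strategy is to analyze, for a generic choice, the pair (flag $F_1$, nilpotent $u_1$) via its interaction with the $v$-cyclic filtration, and show this is governed exactly by inserting the letters of $w$ in the order prescribed by~\ref{RSK2}, with the role of $r^@$ played by the ``new'' Jordan blocks created by adjoining $v$.

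Concretely, I would argue by induction on $N$, peeling off the letter $w(N)$ (equivalently, restricting to a hyperplane flag step). The inductive step splits into the blue case $N\in\b$ versus the red case $N\notin\b$, mirroring steps~1a and~1b. In the blue case, enlarging from size $N-1$ to $N$ should correspond to a classical row-bumping move in both the geometry (a generic conormal degeneration adds one box to the type and to both recording tableaux) and the algorithm; this is essentially the $\Fl\times\Fl$ statement plus the observation that $v$ gains a new nonzero coordinate. In the red case, $v$ is unchanged but $w(N)$ enters the flag data; I would show the generic nilpotent reacts by the ``$r^@$-then-bump'' move, using that the red letters collectively assemble (through step~2) into the partition $\theta$ via~\eqref{pc1}. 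To pin down which irreducible component $F_i(y)$ lands in, I would use the semicontinuity of Jordan type along $\Fl_{u_i}^{T_i}$ and match the flag-type sequence $\l^{(i)}(T_i)$ to the successive shapes in the algorithm.

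The main obstacle I expect is the red-letter analysis: showing that a generic conormal vector really produces the ``insert into $r^@$, then bump'' behavior rather than some other combinatorial rule. The subtlety is that $r^@$ is a \emph{global} gadget whose contents are only resolved at the end (step~2), so the induction on $N$ does not cleanly localize the red insertions; one must carry along the partially-filled $r^@$ as part of the inductive data and verify a compatibility between the partial bumping and the eventual flush. I would address this by reformulating the whole algorithm, as in the first paragraph, as a single classical RSK on an enlarged word where the ``@'' symbols are honest large letters, so that the order of insertions is fixed once and for all and the delicate step~2 disappears; then the induction is on the length of that enlarged word and each step is a genuine row-bumping move, matched on the geometry side by the known $\Fl\times\Fl$ picture applied to the enlarged flag variety $\Fl(V\oplus\sk^{\,r})$ for an appropriate $r$. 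Verifying that this enlargement is compatible with the moment-map equation $u_1+u_2+v\otimes v^*=0$ — i.e.\ that the extra coordinates of the enlarged space are exactly accounted for by $v$ and $v^*$ — is the crux, and is where I would spend most of the effort.
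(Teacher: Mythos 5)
Your high-level strategy matches the paper's: both reduce the mirabolic statement to the classical Steinberg--Spaltenstein picture for $\Fl(V)\times\Fl(V)$ by embedding the problem into a larger flag variety where the rank-one term $v\otimes v^*$ disappears, and both interpret the ``$@$''-row as a device that turns the algorithm of~\ref{RSK2} into an ordinary RSK on an enlarged word. Your first plan (peel off $w(N)$, induct on $N$) has exactly the defect you diagnose yourself --- $r^@$ is global --- and the paper does not use it; the pivot to the enlarged-word picture is the right move.

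Where you are short of a proof is precisely the ``crux'' you flag but do not resolve. It is not enough to say ``the extra coordinates should be accounted for by $v,v^*$''; you need three concrete constructions that do the work. First, the explicit enlarged colored permutation: the paper takes $\tw_+\in RB_{3N}$ with $w_+(i)=i+2N$ for $i\le N$, $w_+(i)=w(i-N)+N$ for $N<i\le 2N$, $w_+(i)=i-2N$ for $i>2N$, and $\beta_+=\{i+N:i\in\beta\}$. Your ``$\Fl(V\oplus\sk^r)$'' never fixes $r$ or the permutation, and without the specific three-block structure the rest does not line up. Second, the subspace that kills the mirabolic term: the paper restricts a general point of $Y_{\tw_+}$ to $S=(\sk[u_1^*]v^*)^\perp$ (of dimension $2N$); on $S$ one has $v^*|_S=0$, so $u_1|_S=-u_2|_S$ and Spaltenstein applies verbatim. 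You do not identify $S$ or observe why the moment-map equation becomes the classical one there. Third, and most substantially, the combinatorial heart of the proof is Lemma~\ref{relf}: an explicit computation of the relative position $w_1$ of $F_1\cap S$ and $F_2\cap S$, which is exactly the word whose classical $P$-- and $Q$--tableaux restrict to $T_1(\tw),T_2(\tw)$. Your proposal posits that such a word $\hat w$ exists and can be read off from $(w,\beta)$, but the identification $\hat w=w_1$ and the verification that classical RSK on $w_1$ reproduces the row-bumping/$r^@$ rule is the nontrivial content; asserting it is a ``purely combinatorial lemma'' does not supply it. Finally, you barely address $\theta$: the paper needs a separate argument (Lemma~\ref{shapes}) that $\theta$ equals the shape of $T_1'$ with its first row removed, using the one-Jordan-block subspace $D=(F_{1,N}+F_{2,N})\cap S+\sk[u]v$. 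Without these four pieces the proposal is a plausible outline rather than a proof.
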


\begin{proof}

Consider colored permutation $\tw_+\in RB_{3N}$ defined by the formulas
\begin{tabbing}
\hspace{4cm}\=$\tw_+\quad=$ \=  $ (w_+,\b_+)$ \\
\>$w_+(i)=$ \> $        \begin{cases}
                              i+2N      &  \text{ if $\quad i\le N$}\\
                              w(i-N)+N  &\text{ if $\quad N<i\le 2N$}\\
                              i-2N      & \text{ if $\quad i> 2N$}\\
                        \end{cases}$\\
\>$\b_+\quad=$ \> $\{i+N|i\in\b\}$
\end{tabbing}

Consider a general point $x\in Y_{\tw_+}, \quad x=(F_1,F_2,u_1,u_2,v,v^*)$.
Denote by $S$ the annihilator of $\sk[u_1^*]\cdot v^*$.
We are going to describe
the relative position of flags  $F_1\cap S$ and $F_2\cap S$.

\subsection{The relative position of flags  $F_1\cap S$ and $F_2\cap S$}
Define 2 sequences of subsets $\{\gamma_m\}$ and $\{\delta_m\}$ $\quad
(m\ge 1)$  inductively as follows:

1. $\gamma_1=\{1,\ldots,3N \}\setminus\b_+$.

2. $\delta_m$ consists of all $i\in\gamma_m$ such that there exists
no $j\in\gamma_m$ satisfying both inequalities
$j<i$ and $w_+(j)<w_+(i)$.

3. $\gamma_{m+1}=\gamma_m\setminus\delta_m$.

It is easy to check that $\delta_m\ne\varnothing$ iff $1\le m \le N$,
moreover, the minimal element of $\delta_m$ is equal to $m$ and the
maximal one is equal to $m+2N$. Define a permutation $w'_1:\{N+1,\ldots,3N\}
\to\{N+1,\ldots,3N\}$ as follows:

$w'_1(i)=\begin{cases}
         w_+(i)&\text{if $i\in\b_+$}\\
         w_+(j)\text{,   where $j=\max\{l\in\delta_m| l<i\}$}&
\text{if $i\in\delta_m$}
\end{cases}
$

\begin{lem}\label{relf}
The flags $F_1\cap S$ and $F_2\cap S$ are in relative position $w_1$.
\end{lem}

\begin {proof}
Choose a basis $e_1,\ldots,e_{3N}$ of $V_+$ such that $F_{1,i}=\langle e_1,
\ldots,e_i\rangle ;\quad F_{2,j}=\langle e_{w_+^{-1} (1)},\ldots,e_{w_+^{-1} (j)}\rangle$.
Denote by $\{e_i^*\}$ the dual basis. Then by sufficiently general choice of
the point $x$ and the basis $\{e_i\}$
we will have $(u_*)^mv^*=\sum\limits_{i\in\gamma_{m-1}}a_{m,i}e_i^*$
where the coefficients $a_{m,i}\ne 0$. Note that the space $S$ is the
intersection of kernels of functionals $(u^*)^mv^*$, where $0\le m\le N-1$.
Hence it is transversal to the spaces
$F_{1,N}$ and $F_{2,N}$. Therefore $i$-dimensional subspaces of the flags
$F_1\cap S$ and $F_2\cap S$ have the form  $F_{1,i+N}\cap S$ and
$F_{2,i+N}\cap S$.

\bigskip
Denote by $r_{i,j}(w'_1)$ the number of all $i'$ such that $i'\le i$ and
$w'_1(i')\le j$. Then to prove the lemma we have to show that
$\dim F_{1,i}\cap F_{2,j}\cap S=r_{i,j}(w'_1)$ for any
$i,j\in\{N+1,\ldots,3N\}$. Define $r_{i,j}(w_+)$ in the same way. Then
$\dim F_{1,i}\cap F_{2,j}=r_{i,j}(w_+)$.
Denote by $R_{i,j}$ the set of all $i'\le i$ such that $w_+(i')\le j$.
Then $ F_{1,i}\cap F_{2,j}$ has a basis $\{e_{i'}\}$ where $i'\in R_{i,j}$.

Note that if $m\le m'$ and $\delta_m\cap R_{i,j}\ne\varnothing$ then
$\delta_{m'} \cap R_{i,j}\ne\varnothing$, so we can find $k_{i,j}\ge 0$
such that $\delta_m \cap R_{i,j}\ne\varnothing$ iff $m\le k_{i,j}$.
Then $(u^*)^mv^*|_{F_{1,i}\cap F_{2,j}}\ne 0$ iff $m\le k_{i,j}$.
Moreover, for ${m=1,\ldots,k_{i,j}}$ these functionals are linearly
independent. By this reason  the space $F_{1,i}\cap F_{2,j}\cap S$
being the intersection of kernels of these functionals has dimension
$\dim F_{1,i}\bigcap F_{2,j}\bigcap S=\dim F_{1,i}\bigcap F_{2,j}-k_{i,j}=
r_{i,j}(w_+)-k_{i,j}$.

\bigskip
It remains to prove that $r_{i,j}(w_+)-k_{i,j}=r_{i,j}(w'_1)$.
We have the following equalities:

\smallskip
{$ R_{i,j}(w_+)=(R_{i,j}(w_+)\cap\b_+)\bigcup (\bigcup\limits_{m=1}^N
R_{i,j}(w_+)\cap\delta_m)$};

{$ R_{i,j}(w'_1)=(R_{i,j}(w'_1)\cap\b_+)\bigcup (\bigcup\limits_{m=1}^N
R_{i,j}(w'_1)\cap\delta_m)$}.

From the definition of $w'_1$ we obtain $R_{i,j}(w'_1)\cap\b=
R_{i,j}(w_+)\cap\b$. Besides,

in the case $m>k_{i,j}$ we have $R_{i,j}(w'_1)\cap\delta_m=
R_{i,j}(w_+)\cap\delta_m=\varnothing\quad;$

in the case $m\le k_{i,j}$ we have $R_{i,j}(w'_1)\cap\delta_m=
R_{i,j}(w_+)\cap\delta_m\setminus\{i_m \}$, where
$i_m$ is the minimal element of $R_{i,j}(w_+)\cap\delta_m$.

This implies that the set  $R_{i,j}(w'_1)$ can be obtained from the set
$R_{i,j}(w_+)$ by removing the
elements $i_1,\ldots,i_{k_{i,j}}$, whence we get the required equality:
$r_{i,j}(w'_1)=r_{i,j}(w_+)-k_{i,j}$.
\end {proof}

\subsection{}
Let, as before, $x=(F_1,F_2,u_1,u_2,v,v^*)$ be a general point of~$Y_{\tw_+}$, 
and put $ {S=(\sk[u_1^*]v^*)^\perp}$.
Let $u=u_1|_S=-u_2|_S\in\End S$, and let $T'_1$ and $T'_2$ be the standard tableaux
such that $F_1\cap S\in\Fl_{u,T'_1}$;  $F_2\cap S\in\Fl_{u,T'_2}$.

\begin{lem}
One can make  the flags $F_1\cap S$ and  $F_2\cap S$ to be any points of
varieties $\Fl_{u,T'_1}$ and $\Fl_{u,T'_2}$ by an appropriate choice
of a point $x$.
\end{lem}

\begin{proof}
Consider any $F_1^S\in\Fl_{u,T'_1}$ and $F_2^S\in\Fl_{u,T'_2}$ and let the
flags $F'_1, F'_2$ be defined~(for~$k=~1,~2)$  as follows:
$\begin{cases}
F'_{k,i}=F_{k,i}&             \text{if $i\le N$}\\
F'_{k,i}=F^S_{k,i-N}+F_{k,N}& \text{if $i>N$.}
\end{cases}$

Then $x'=(F'_1,F'_2,u_1,u_2,v,v^*)\in Y$. Note that the correspondence
${(F'_1,F'_2)\mapsto x'}$ defines a map
$f:\Fl_{u,T'_1}\times\Fl_{u,T'_2}\to Y$.

Since $\Z{u,T'_1}\times\Z{u,T'_2}$ is irreducible, the image of $f$
belongs to one irreducible component of $Y$.
As this image contains the point $x$, it lies in $Y_{\tw_+}$. Finally,
as $F_1^S$ and $F_2^S$ are arbitrary points of
$\Z{u,T'_1}$ and $\Z{u,T'_2}$, replacing $x$ by $x'$ proves the lemma.
\end{proof}

\subsection{}
According to Lemma~\ref{relf}, the relative position of
$F_1\cap S$ and $F_2\cap S$
is given by the permutation $w_1$, so using the result of
Spaltenstein~(\cite{S}), we see that the pair $T'_1,T'_2$ corresponds
to $w_1$ by the classical RSK correspondence.

Now note that the spaces $\1{2N}$ and $\2{2N}$ are invariant
with respect to both operators $u_1$ and $u_2$.
Let $V=\1{2N}\cap\2{2N}$. Then a sixtuple
$(F_1\cap V,F_2\cap V, u_1|_V,u_2|_V,v,v^*|_V)$ is a general point
of variety $Y_\tw$. Let $(\bt(\tw),T_1(\tw),T_2(\tw))$ be the triple
defined in~\ref{mir} and
$\bt(\tw)=(\nu,\theta,\nu')$. Then $F_1\cap V\in\Z{u_1|_{V},T_1(\tw)}$ , $F_2\cap V\in\Z{u_2|_{V},T_1(\tw)}$
and $\theta$ is the type of the nilpotent $u_1|_{V/\sk[u_1]v}$.

\begin{lem}\label{star}
The tableaux $T_1(\tw)$ and  $T_2(\tw)$ are obtained of the tableaux
$T'_1$ and $T'_2$ by removing the numbers
$N+1,\ldots,2N$.	

\end{lem}

\begin{proof}
By symmetry, it suffices to prove the lemma for
$T_1(\tw)$ which we will denote by $T$ for short.
Recall that $T'_1$ is defined by the condition
$F_1\cap S\subset\Z{u_1|_S,T'_1}$.
So denoting by $\tT_1$ the tableau obtained from $T'_1$ by removing
the numbers greater than $N$, we have
$$F_1\cap S\cap \1{2N}\in \Z{u_1|_{S\cap \1{2N}},\tT_1}.$$
Note that the spaces $V$ and $S\cap \1{2N}$ are both complementary to
$\1{N}$ inside $\1{2N}$. Therefore they
can be identified with $\1{2N}/\1{N}$. Under this identification the
operators $u_1|_V$ and $u_1|_{S\cap\1{2N}}$ go
to the same operator $u_1|_{\1{2N}/\1{N}}$. Similarly,
the flags $F_1\cap V$ and $F_1\cap S\cap\1{2N}$
go to the same flag $(F_1\cap\1{2N})/\1{N}$.
From this we obtain that
$$(F_1\cap\1{2N})/\1{N}\in\Z{u_1|_{\1{2N}/\1{N}},T_1}\qquad
\textrm{and}\qquad (F_1\cap\1{2N})/\1{N}\in\Z{u_1|_{\1{2N}/\1{N}},\tT_1}. $$
Now $F_1$ being a general point of a certain component
of the variety $\Z{u_1}$ we obtain that $T_1=\tT_1$.
\end{proof}

\begin{lem}\label{shapes}
$$\theta=(\Sh(T'_1))_-=(\Sh(T'_2))_-.$$
\end{lem}

\begin{proof}
By definition  of the tableaux $T'_1$ and $T'_2$, their shape coincides with
the type of the nilpotent $u=u_1|_S=u_2|_S$.
On the other hand, $\theta$ is the type of $u_1|_{V/\sk[u_1]v}$.
Define $L:=\sk[u]v$ and consider  the space
$D=(\1{N}+\2{N})\cap S+L$. It is invariant under $u$, moreover $u|_D$
has only one Jordan block (it can be checked directly).
Besides, $D\cap V=L$ and $D+V=S$. Therefore $u|_{V/L}$ has the same type as
$u|_{S/D}$. Define $d:=\dim D$. Then from the equalities
$S=D+V,\quad\dim V=N,\quad \dim S=2N$, it follows $d\ge N$.
So $d$ is the least power of $u$ vanishing on $S$. Hence, the type
of $u|_{S/D}$ is obtained from the type of $u|_S$ by removing the
maximal part of the partition.
\end{proof}

\subsection{The completion of proof of Theorem~\ref{fthm}}
Let $(v^c,\theta^c,(\nu')^c,T_1^c,T_2^c)$ be the result of application
to $\tw$ of the algorithm described in~\ref{RSK2}.
We have to prove that this quintuple coincides with
$(v(\tw),\theta(\tw),\nu'(\tw),T_1(\tw),T_2(\tw))$.

Note that the result of application of algorithm~\ref{RSK2} will not change
if instead of infinite row of symbols ``@''
we will take finite sequence $N+1,\ldots,2N$. Then $i+N\in\delta_m$
iff at the $i$-th step of the algorithm $w(i)$ is
being inserted into the $m$-th position of $r^@$. In this case $w_1(i)$
is the number inserted into $T^@$ at the $i$-th step of the algorithm.
Hence, if we apply to $w_1$ the classical RSK algorithm and after that
throw out from the tableaux $T_1(w_1)$  and
$T_2(w_1)$ all the numbers greater than $N$ then we obtain the same pair
of tableaux as the pair $T_1^c$ and $T_2^c$ obtained by the algorithm
~\ref{RSK2}.

Moreover, the partition $\theta^c$ has the form
$\theta^c=(\Sh(T_1(w_1)))_-$. We have proved above that
$$T_1(w_1)=T'_1;\quad T_2(w_1)=T'_2;\quad \theta=(\Sh(T'_1))_-.$$
In view of Lemma~\ref{star} we obtain
$$T_1^c=T_1(\tw);\quad T_2^c=T'_2(\tw)\quad\text{and}\quad
\theta^c=\theta(\tw).$$

The proof of Theorem~\ref{fthm} is completed.
\end{proof}

\section{Hecke algebra and mirabolic bimodule}

\subsection{}
\label{4.1}
Let $\S$ be a finite set and $E$ be a vector space over
$\CC$ with basis $\{e_\a\}_{\a\in\S}$.
Then the algebra $\End(E)$ of all linear operators on $E$ can be
described
as the algebra of $\CC$-valued functions on $\S\times\S$ with the
multiplication given by convolution: $$(f*g)(\a,\b)=
\sum_{\g\in\S}f(\a,\g)g(\g,\b)$$
 If a finite group $G$ acts on $\S$ then it also acts on $E$ and $\End(E)$.
Denote by $H= \End_G(E)\subset\End(E)$ the algebra of $G$-invariants in
$\End(E)$. It consists of all functions on $\S\times\S$ that are constant
on each $G$-orbit.

Now let $\sk=\FF_q$ be a finite field of $q$ elements. Let $V,X,Y$ be
as in the previous section. Let $\S$ be the set of $\sk$-points of
$\Fl(V)$ and $G=GL(V)$. Then the algebra $H$ from the previous
paragraph is called Hecke algebra. It has a basis consisting of
characteristic functions of orbits. Denote by $T_w$ the characteristic
function of $\O_w$ considered as an element of $H$. Now consider the
vector space $R$ of $G$-invariant $\CC$-valued functions on $X(\sk)$
where $X=\Fl(V)\times\Fl(V)\times V$. It has a natural structure of
$H$-bimodule. Namely, if $f\in H,g\in R$ then
$$(f*g)(F_1,F_2,v)=\sum_{F\in[\Fl(V)](\sk)}f(F_1,F)g(F,F_2,v),$$
$$(g*f)(F_1,F_2,v)=\sum_{F\in[\Fl(V)](\sk)}g(F_1,F,v)f(F,F_2).$$

If $\tw\in RB$, let $T_{\tw}\in R$ denote the characteristic function
of the corresponding orbit $\O_\tw\subset X$.
Note that the involutions $(F_1,F_2)\invl (F_2,F_1)$ and
$(F_1,F_2,v)\invl (F_2,F_1,v)$ induce anti-automorphisms of the
algebra $H$ and the bimodule $R$. These anti-automorphisms send
$T_w$ to $T_{w^{-1}}$ and $T_\tw$ to $T_{\tw^{-1}}$ where
$\tw^{-1}=(w^{-1},w(\b))$ for $\tw=(w,\b)$.

\subsection{Explicit formulas for the action of $\bH$ in $\bR$}
We are now going to compute the $H$-action on~$R$ in the basis
$\{T_\tw\}$. It is known that the algebra~$H$ is generated by the
elements $T_{s_i}$ where $s_i=(i,i+1)$ is the elementary transposition.
So, it suffices to compute $T_{s_i} T_\tw$ and $T_\tw T_{s_i}$. We will
compute only $T_\tw T_{s_i}$, since the other product can be obtained by
applying the above anti-automorphisms.

\begin{prop}[cf.~\cite{Sol}, Th.~6.6]\label{thm_expl}
Let $\tw=(w,\b)\in RB$ and let $s=s_i\in\gS_N$, $i\in\{1,\dots,N-1\}$.

Denote $\tw s = (ws, s(\b))$ and $\tw'=(w,\b\btu\{i+1\})$.\footnote
{Here $\btu$ is the {\em symmetric difference}:   $A\btu B  := (A\setm B) \cup (B\setm A)
=(A\cup B)\setm(A\cap B)$.}
 Let $\s=\s(\tw)$
and $\s'=\s(\tw s)$ be given by~\eqref{sigma}. Then
{\small\rm \eql{eq_expl}{  T_\tw T_s =
\begin{cases}
T_{\tw s} &\text{if $ws > w$ and  $i+1 \not\in \s'$,}  \\
T_{\tw s} + T_{(\tw s)'} &
             \text{if $ws > w$ and  $i+1 \in \s'$,}  \\
T_{\tw'} + T_{\tw's} &
             \text{if $ws < w$ and  $\b \cap \iota = \{i\}$,}  \\
(q-1) T_\tw + q T_{\tw s} &
             \text{if $ws < w$ and  ($i \not\in \s$ or $i+1\in\b\setminus\s$),}  \\
(q-2) T_\tw + (q-1) (T_{\tw'} + T_{\tw s}) &
             \text{if $ws < w$ and  $\iota \subset \s$}  \\
\end{cases} }}
where $\iota = \{i,i+1\}$.
(Note that the condition in the fourth case is equivalent to ``$ws < w$, $\tw s\in RB$ and $\s'=s(\s)$''.)
\end{prop}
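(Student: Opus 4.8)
The plan is to compute the convolution $T_\tw * T_s$ directly over $\FF_q$ from the definition of the $H$-bimodule structure on $R$ given in~\ref{4.1}. For any triple one has
$$(T_\tw * T_s)(F_1,F_2,v)=\#\{F\in[\Fl(V)](\FF_q)\mid (F_1,F,v)\in\O_\tw\ \text{and}\ (F,F_2)\in\O_s\},$$
and since this function is $G$-invariant, the coefficient of $T_{\tw''}$ in $T_\tw * T_s$ is obtained by fixing any single representative $(F_1,F_2,v)\in\O_{\tw''}$ and evaluating the count. The companion product $T_s * T_\tw$ is then recovered by applying the anti-automorphism of~\ref{4.1} (which fixes $T_s$ and sends $T_\tw\mapsto T_{\tw^{-1}}$), so only $T_\tw * T_s$ needs to be analysed.

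First I would dispose of the underlying permutations. A flag $F$ with $(F,F_2)\in\O_s$, $s=s_i$, coincides with $F_2$ in every term except the $i$-dimensional one, which ranges over the $q$ lines of $F_{2,i+1}/F_{2,i-1}$ distinct from $F_{2,i}/F_{2,i-1}$; after choosing a basis adapted to $(F_1,F_2)$ via~\eqref{F1}--\eqref{F2}, these $q$ flags form an explicit one-parameter family $F^{(c)}$, $c\in\FF_q$. The requirement $(F_1,F^{(c)})\in\O_w$ forces the relative position of $(F_1,F_2)$ to be $w$ or $ws$, and the classical identity $T_wT_s=T_{ws}$ if $\ell(ws)>\ell(w)$, $T_wT_s=qT_{ws}+(q-1)T_w$ if $\ell(ws)<\ell(w)$ in $H$ already tells us, for each length case and each possible underlying permutation of $\tw''$, how many of the $F^{(c)}$ lie in $\O_w$ (exactly one, or $q$, or $q-1$, or none). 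What remains is purely combinatorial: for the admissible $c$, decide whether $v$ sits in the right position relative to $(F_1,F^{(c)})$, i.e.\ follow how the colouring $\b$ — equivalently $\s$ via~\eqref{sigma} — changes under the flag move $F^{(c)}\leftrightarrow F_2$.

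The core of the proof is this colouring bookkeeping. Using the normal form~\eqref{F1}--\eqref{F2} together with $v=\sum_{j\in\b}e_j$ from~\eqref{v} and the description of colourings in Lemma~\ref{1}, I would read off, for each admissible $c$, the element $(w,\beta)\in RB$ containing $(F_1,F^{(c)},v)$, and sum over $c\in\FF_q$. The five cases of~\eqref{eq_expl} reflect the ways $v$ can meet the coordinate plane interchanged by $s$ and the flag $F_1$: when $\ell(ws)>\ell(w)$ the middle flag is unique, and whether $i+1\in\s'$ decides whether $v$ picks up the extra label $i+1$, giving the single term $T_{\tw s}$ or the pair $T_{\tw s}+T_{(\tw s)'}$; when $\ell(ws)<\ell(w)$ the trichotomy $\b\cap\iota=\{i\}$ / (the generic condition $i\notin\s$ or $i+1\in\b\setminus\s$) / $\iota\subset\s$ records whether the $\FF_q$-family of middle flags separates the two coordinates of $v$ indexed by $\iota$, leaves the colouring generic, or leaves it generic except for one exceptional value of $c$. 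That exceptional value is exactly what turns a fibre of size $q-1$ into one of size $q-2$; pinning it down correctly is the delicate point — the case missed in an earlier version and supplied by D.~Zakharov.

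Thus the only genuine obstacle is the last step: one must keep track simultaneously of the permutation change $w\leftrightarrow ws$ and of the set changes $\b\leftrightarrow s(\b),\ \b\btu\{i+1\}$ through all five regimes, and in particular isolate the single degenerate parameter value in the case $\iota\subset\s$. No tool beyond linear algebra over $\FF_q$ and the combinatorics of Lemma~\ref{1} is needed; the difficulty lies entirely in the meticulous case analysis.
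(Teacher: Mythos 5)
The paper does not actually supply a proof of this proposition: it is stated with a citation to Solomon's Theorem~6.6, and the text (``described explicitly'', ``elementary combinatorial and linear algebraic nature'') indicates the intended justification is exactly the direct orbit count you set out. Your framing is correct and is the only reasonable route: evaluate the convolution over $\FF_q$ at a representative of each target orbit, use the anti-automorphism $(F_1,F_2,v)\leftrightarrow(F_2,F_1,v)$ to reduce to the right action, parametrize the $q$ middle flags, split them by the classical Hecke relation according to the underlying permutation, and then track how the colour set $\b$ (equivalently $\s$) transforms. You also correctly single out the hazardous step --- the single degenerate parameter value that turns $q-1$ into $q-2$ in the regime $\iota\subset\s$, precisely the case that was corrected in the published version. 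However, be aware that what you have written is a plan, not a proof: the entire mathematical content of the statement lives in the colouring bookkeeping, namely deriving from~\eqref{F1}--\eqref{v} and Lemma~\ref{1} exactly how, for each of the five regimes, the $q$ (or $q-1$, or $1$) admissible middle flags distribute among $\b$, $\b\btu\{i+1\}$, $s(\b)$, $s(\b)\btu\{i+1\}$, and showing the multiplicities are $1$, $q-1$, $q-2$ as claimed. That case analysis is not carried out here, and since the one known error in this formula was precisely of this bookkeeping kind, you should not treat it as routine; it must actually be done to complete the argument.
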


\subsection{Tate sheaves}
It is well-known that $H$ is the specialization under $\bq\mapsto q$
of a $\BZ[\bq,\bq^{-1}]$-albebra $\bH$.
The formulas~(\ref{eq_expl}) being polynomial in $q$, we may (and will) view
$R$ as the specialization under $\bq\mapsto q$ of a
$\BZ[\bq,\bq^{-1}]$-bimodule $\bR$ over the $\BZ[\bq,\bq^{-1}]$-algebra $\bH$.
We consider a new variable $\bv,\ \bv^2=\bq$, and extend the scalars to
$\BZ[\bv,\bv^{-1}]:\ \CH:=\BZ[\bv,\bv^{-1}]\otimes_{\BZ[\bq,\bq^{-1}]}\bH;\
\CR:=\BZ[\bv,\bv^{-1}]\otimes_{\BZ[\bq,\bq^{-1}]}\bR$.

Recall the basis $\{H_w:=(-\bv)^{-\ell(w)}T_w\}$ of $\CH$ (see e.g.~\cite{So}),
and the Kazhdan-Lusztig basis $\{\utH_w\}$ ({\em loc. cit.});
in particular, for
a simple transposition $s,\ \utH_s=H_s-\bv^{-1}$.
For $\tw\in RB$, we denote by $\ell(\tw)$ the difference $\dim(\Omega_\tw)-
\bn$, where $\bn:=\frac{N(N-1)}{2}=\dim(\Fl(V))$.
We introduce a new basis $\{H_\tw:=(-\bv)^{-\ell(\tw)}T_\tw\}$ of $\CR$.
In this basis the right action of the Hecke algebra generators $\utH_s$
takes the form:

\begin{prop}
\label{thm_explicit}
Let $\tw=(w,\b)\in RB$ and let $s=s_i\in\gS_N$, $i\in\{1,\dots,N-1\}$.
Denote $\tw s = (ws, s(\b))$ and $\tw'=(w,\b\btu\{i+1\})$. Let $\s=\s(\tw)$
and $\s'=\s(\tw s)$ be given by~\eqref{sigma}. Then
{\small\rm \eql{eq_explicit}{  H_\tw\utH_s =
\begin{cases}
H_{\tw s}-\bv^{-1}H_\tw &\text{if $ws > w$ and  $i+1 \not\in \s'$,}  \\
H_{\tw s} - \bv^{-1}H_{(\tw s)'}-\bv^{-1}H_\tw &
             \text{if $ws > w$ and  $i+1 \in \s'$,}  \\
H_{\tw'}-\bv^{-1}H_\tw-\bv^{-1}H_{\tw's} &
             \text{if $ws < w$ and  $\b \cap \iota = \{i\}$,}  \\
H_{\tw s}-\bv H_\tw &
             \text{if $ws < w$ and  ($i \not\in \s$ or $i+1\in\b\setminus\s$),}  \\
(\bv^{-1}-\bv)H_\tw + (1-\bv^{-2})(H_{\tw'} + H_{\tw s}) &
             \text{if $ws < w$ and  $\iota \subset \s$}  \\
\end{cases} }}
where $\iota = \{i,i+1\}$.
\end{prop}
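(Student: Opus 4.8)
The plan is to deduce Proposition~\ref{thm_explicit} directly from Proposition~\ref{thm_expl} by a purely formal change of basis, rather than re-deriving the geometry. Recall that we have set $H_\tw = (-\bv)^{-\ell(\tw)}T_\tw$, $H_w = (-\bv)^{-\ell(w)}T_w$, and $\utH_s = H_s - \bv^{-1} = (-\bv)^{-1}T_s - \bv^{-1} = -\bv^{-1}(T_s + 1)$ (using $\ell(s)=1$). So for any $\tw\in RB$ we have
\[
  H_\tw\utH_s = (-\bv)^{-\ell(\tw)}T_\tw\cdot\bigl(-\bv^{-1}(T_s+1)\bigr)
  = (-\bv)^{-\ell(\tw)}\bigl(-\bv^{-1}\bigr)\bigl(T_\tw T_s + T_\tw\bigr),
\]
and the first factor combines with the normalization of each term on the right-hand side of~\eqref{eq_expl}. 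Thus the entire proof reduces to bookkeeping of the length function $\ell$ on $RB$: for each of the five cases I must compute $\ell(\tw s)-\ell(\tw)$, $\ell((\tw s)')-\ell(\tw)$, $\ell(\tw')-\ell(\tw)$, and $\ell(\tw' s)-\ell(\tw)$, exactly as in the classical passage from the $T$-basis to the normalized $H$-basis of $\CH$.

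First I would record the dimension (equivalently $\ell$) increments. Since $\ell(\tw)=\dim\Omega_\tw-\bn$ and $\dim\Omega_{w,\beta}$ decomposes as $\ell(w)$ plus a contribution from $\beta$, the quantity $\ell(\tw s)-\ell(\tw)$ splits as $(\ell(ws)-\ell(w)) = \pm1$ plus the change in the $\beta$-part. In the first two cases $ws>w$, so $\ell(ws)-\ell(w)=+1$; one checks that replacing $(w,\beta)$ by $(ws,s(\beta))$ changes the $\beta$-contribution by $0$ here (the colored word is merely transported), so $\ell(\tw s)=\ell(\tw)+1$; similarly $(\tw s)'$ adds one blue label relative to $\tw s$ in a way that raises $\ell$ by one more, giving $\ell((\tw s)')=\ell(\tw)+2$ in the second case, while $\ell(\tw)=\ell(\tw)$ trivially. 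In cases three through five $ws<w$, so $\ell(ws)-\ell(w)=-1$, and one tracks the $\beta$-contribution of $\tw'=(w,\beta\btu\{i+1\})$ and of $\tw' s=(ws,s(\beta\btu\{i+1\}))$ against $\tw$: in the third case $\ell(\tw')=\ell(\tw)+1$ and $\ell(\tw's)=\ell(\tw)$; in the fourth $\ell(\tw s)=\ell(\tw)-1$; in the fifth $\ell(\tw')=\ell(\tw)+1$ and $\ell(\tw s)=\ell(\tw)-1$. These are the standard increments underlying the shape of the KL-type formulas, and they are precisely what make the coefficients in~\eqref{eq_explicit} come out as $\bv^{\pm1}$, $\bv^{-1}$, $(\bv^{-1}-\bv)$, $(1-\bv^{-2})$.

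With the increments in hand, the computation in each case is mechanical: substitute $T_\tw T_s$ from~\eqref{eq_expl}, add $T_\tw$, multiply by $(-\bv)^{-\ell(\tw)}(-\bv^{-1})$, and re-express every $T$ in terms of the corresponding $H$ using the increment to supply the missing power of $(-\bv)$. For instance in the first case $T_\tw T_s + T_\tw = T_{\tw s}+T_\tw$, and multiplying by $(-\bv)^{-\ell(\tw)}(-\bv^{-1})$ gives $(-\bv)^{-\ell(\tw s)}T_{\tw s}\cdot(-\bv)\cdot(-\bv^{-1}) + (-\bv)^{-\ell(\tw)}T_\tw\cdot(-\bv^{-1}) = H_{\tw s} - \bv^{-1}H_\tw$, as claimed. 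The remaining four cases are identical in spirit; the fifth requires only $q-2=\bv^2-2$ and $(q-1)=(\bv^2-1)$ to be divided by the appropriate power of $-\bv$, producing $(\bv^{-1}-\bv)H_\tw+(1-\bv^{-2})(H_{\tw'}+H_{\tw s})$.

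The main obstacle is not conceptual but the careful verification of the $\beta$-contribution to $\dim\Omega_{w,\beta}$ and hence of each length increment — in particular making sure, case by case, that the side conditions in~\eqref{eq_expl} ($i+1\notin\s'$ versus $i+1\in\s'$, $\b\cap\iota=\{i\}$, $i\notin\s$ or $i+1\in\b\setminus\s$, $\iota\subset\s$) are exactly the combinatorial situations in which the asserted increments hold. I would handle this by writing $\dim\Omega_{w,\beta}$ explicitly from Lemma~\ref{1} (the orbit $\Omega_{w,\beta}$ has the stabilizer structure $A^\times$ with $A = \bigoplus_{i\le i',\ w(i)\le w(i')}\sk E_{i,i'}$ and $v=\sum_{i\in\b}e_i$), extract a closed formula for the dimension in terms of $w$ and $\b$, and then check the five increments by direct inspection of how $w\mapsto ws$ and $\b\mapsto s(\b)$ or $\b\mapsto\b\btu\{i+1\}$ affect that formula under the stated hypotheses. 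Once this is done, Proposition~\ref{thm_explicit} follows immediately and the proof is complete.
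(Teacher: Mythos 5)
Your approach — deducing \eqref{eq_explicit} from \eqref{eq_expl} by the rescaling $H_\tw=(-\bv)^{-\ell(\tw)}T_\tw$ and bookkeeping of $\ell$ — is exactly right and is how the paper (implicitly, with no written proof) obtains the proposition. The clean closed formula you would need is $\ell(w,\b)=\ell(w)+|\b|$, since $\Omega_{w,\b}$ fibers over $\Omega_w$ with fiber the dense $A^\times$-orbit in $S(\b)=\bigoplus_{i\in\b}\sk e_i$, of dimension $|\b|$. With this, your cases 1, 3, 4 are right.

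However, your asserted increments in cases 2 and 5 are wrong, and the error is conceptual, not typographical: you treat $\btu\{i+1\}$ as always \emph{adding} a blue label, whereas the side conditions in those two cases force $i+1$ to already lie in the relevant set, so the symmetric difference \emph{removes} it. In case 2, $i+1\in\s'\subset s(\b)$, hence $i\in\b$ and $i+1\in s(\b)$, so $(\tw s)'=(ws,\,s(\b)\setminus\{i+1\})$ has $|s(\b)|-1=|\b|-1$ blue labels and $\ell((\tw s)')=\ell(w)+1+(|\b|-1)=\ell(\tw)$, not $\ell(\tw)+2$; this is what produces the coefficient $(-\bv)^{-1}=-\bv^{-1}$ on $H_{(\tw s)'}$ (with your value one would get $-\bv$). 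In case 5, $\iota\subset\s\subset\b$, hence $i+1\in\b$ and $\tw'=(w,\b\setminus\{i+1\})$, so $\ell(\tw')=\ell(\tw)-1$, not $\ell(\tw)+1$; only with $\ell(\tw')=\ell(\tw)-1$ does $(q-1)(-\bv)^{-\ell(\tw)-1}T_{\tw'}=(q-1)\bv^{-2}H_{\tw'}=(1-\bv^{-2})H_{\tw'}$ as claimed. Once these two increments are corrected (and it is case 3, where $\b\cap\iota=\{i\}$ forces $i+1\notin\b$, that is the one where $\btu$ genuinely adds), your reduction goes through verbatim.
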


It is well known that $\CH$ is the Grothendieck ring (with respect to
convolution) of the derived constructible $G$-equivariant category of
Tate Weil $\ol{\mathbb Q}_l$-sheaves on $\Fl(V)\times\Fl(V)$,
and multiplication by $\bv$
corresponds to the twist by $\ol{\mathbb Q}_l(-\frac{1}{2})$
(so that $\bv$ has
weight 1), see e.g.~\cite{BGS}. In particular, $H_w$ is the class of the
shriek extension of $\Ql[\ell(w)+\bn](\frac{\ell(w)+\bn}{2})$
from the corresponding
orbit, and $\utH_w$ is the selfdual class of the Goresky-MacPherson extension
of $\Ql[\ell(w)+\bn](\frac{\ell(x)+\bn}{2})$ from this
orbit. Similarly, we will prove that $\CR$ is the Grothendieck group
of the derived constructible $G$-equivariant category of Tate Weil
$\ol{\mathbb Q}_l$-sheaves on $X$, and $\CH$-bimodule structure is given by
convolution. In particular, $H_\tw$ is the class of the
star extension of
$\Ql[\ell(\tw)+\bn](\frac{\ell(\tw)+\bn}{2})$
from the orbit $\Omega_\tw\subset X$. We will denote by
$j_{!*}\Ql[\ell(\tw)+\bn](\frac{\ell(\tw)+\bn}{2})$ the selfdual
Goresky-MacPherson extension of $\Ql[\ell(\tw)+\bn](\frac{\ell(\tw)+\bn}{2})$
from $\Omega_\tw\subset X$, and we will denote by $\utH_\tw$ its class
in the Grothendieck group.

Recall that a $G$-equivariant constructible Weil complex $F$ on $X$ is
called {\em Tate} if any cohomology sheaf of its restriction $i_\tw^*F$ and
corestriction $i_\tw^!F$
to any orbit $\Omega_\tw$ admits a filtration with successive quotients
of the form
$\Ql(m),\ m\in\frac{1}{2}\BZ$. If for any $\tw\in RB$ the sheaf
$j_{!*}\Ql[\ell(\tw)+\bn](\frac{\ell(\tw)+\bn}{2})$ is Tate, then the
shriek extension $j_!\Ql[\ell(\tw)+\bn](\frac{\ell(\tw)+\bn}{2})$ is Tate
as well (see Remark between Lemmas 4.4.5 and 4.4.6 of~\cite{BGS}).
Note also that the $G$-equivariant geometric fundamental group of any orbit
$\Omega_\tw$ is trivial.
Hence the classes $H_\tw=[j_!\Ql[\ell(\tw)+\bn](\frac{\ell(\tw)+\bn}{2})]$
do form a $\BZ[\bv,\bv^{-1}]$-basis of the Grothendieck group of
$G$-equivariant
Tate sheaves on $X$, and this Grothendieck group is isomorphic to $\CR$.

In order to prove the Tate property of
$j_{!*}\Ql[\ell(\tw)+\bn](\frac{\ell(\tw)+\bn}{2})$, we need to study
certain analogues of Demazure resolutions of the orbit closures
$\overline\Omega_\tw$.

\subsection{Demazure type resolutions}
\label{ny}
We consider the elements $\tw_i=(w,\beta_i)\in RB$ such that
$w=\id$ (the identity permutation), and $\beta_i=\{1,\ldots,i\}$, where
$i=0,\ldots,N$. We set
$\utH_{\tw_i}:=\sum_{0\leq j\leq i}(-\bv)^{j-i}H_{\tw_j}$. This is the class
of the selfdual (geometrically constant) IC sheaf on the closure of the
orbit $\Omega_{\tw_i}$.

We fix $k \quad(0\le k\le N)$, and
a pair of sequences  $i_1,\ldots,i_r$ and  $j_1,\ldots,j_s$
of integers between 1 and $N-1$.
Let  $S=S_{i_1,\ldots,i_r}^{j_1,\ldots,j_s;k}$ be a variety of collections
of flags and vectors $(F_0,\ldots,F_r,F_0',\ldots,F_s',v)$ such that:

1.\quad$(F_r,F_0',v) \in \zowt {k}$;

2.\quad$(F_{p-1},F_p)\in\zos{i_p}$ for any $p\in{\{1,\ldots,r\}}$;

3.\quad$(F'_{q-1},F'_q)\in\zos{j_q}$ for any $q\in{\{1,\ldots,s\}}$.

In other words,
$$S=\zos{i_1} \uf \ldots\uf\zos{i_r}\uf\zowt{k}\uf\zos{j_1}\uf
\ldots\uf\zos{j_s}$$

Consider a map $\phi=\phi_{i_1,\ldots,i_r}^{j_1,\ldots,j_s} :
S_{i_1,\ldots,i_r}^{j_1,\ldots,j_s;k} \to X$ which takes
 $F_0,\ldots,F_r,F_0',\ldots,F_s'$ to $(F_0,F'_s,v)$.

\begin{prop}
\label{ABC}

For any $\tw\in RB$ there exist $i_1,\ldots,i_r;j_1,\ldots,j_s$
and $k$ such that:

a) $\phi ( S)=\zowt {} $, moreover
$\phi$ is an isomorphism over $ \owt{} $.

b) The sheaf$\quad\phi_*(\Ql)$ is Tate.
\end {prop}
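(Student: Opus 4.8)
The plan is to reduce the mirabolic situation to the classical one by separately resolving the two flag-variety factors via ordinary Demazure (Bott--Samelson) resolutions, with the single ``mirabolic'' orbit $\zowt{k}$ in the middle playing the role of a base case that is already smooth. First I would dispose of the base case: by the parametrization of Lemma~\ref{1}, the orbit $\Omega_{\tw_k}$ with $\tw_k=(\id,\{1,\dots,k\})$ consists of triples $(F_1,F_2,v)$ with $F_1=F_2$ and $v\in F_{1,k}$; its closure $\zowt{k}$ is the total space of a sub-bundle of the trivial bundle over the diagonal $\Fl(V)\hookrightarrow\Fl(V)\times\Fl(V)$, hence it is already smooth and $\Ql$ on it is tautologically Tate (being a vector-bundle over a smooth base whose cohomology is Tate). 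So the content is entirely in building a chain of $\PP^1$-bundle steps attaching a word in simple reflections on the left (the sequence $i_1,\dots,i_r$) and on the right (the sequence $j_1,\dots,j_s$) so that the composite map $\phi$ has the two properties (a) and (b).

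For part (a): given $\tw=(w,\beta)\in RB$, I would use the description~\eqref{sigma} of $\sigma=\sigma(\tw)$ together with the structure of $RB$ to write $\Omega_{\tw}$ as the image under convolution of $\Omega_{w_1}\times_{\Fl(V)}\Omega_{\tw_k}\times_{\Fl(V)}\Omega_{w_2}$ for suitable $w_1,w_2\in\gS_N$ and $k=|\sigma|$. Concretely, one wants $k$ to be the size of the maximal decreasing subword recorded by $\sigma$, $w_2$ to carry the ``red/blue sorting'' of the right flag so that after the middle mirabolic orbit one lands in the correct $\GL(V)$-orbit, and $w_1$ to adjust the left flag; the recipe should mirror the fact (used already in Theorem~\ref{fthm} via the colored permutation $\tw_+$) that a mirabolic orbit is obtained from an ordinary permutation by inserting a block of ``@''-type moves. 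Then I take $i_1,\dots,i_r$ to be a reduced word for $w_1$ and $j_1,\dots,j_s$ a reduced word for $w_2$. Each factor $\zos{i_p}$ is the closure of the orbit of $s_{i_p}$, which is a $\PP^1$-bundle over $\Fl(V)$, so $S$ is an iterated $\PP^1$-bundle over $\zowt{k}$ and in particular smooth and irreducible. The usual Bott--Samelson argument — convolution of Schubert closures with reduced words — shows $\phi(S)=\zowt{}$ and that $\phi$ is an isomorphism over the open dense orbit $\owt{}$, because over a point of $\owt{}$ each successive flag in the chain is uniquely determined (the reducedness of the words is exactly what forces uniqueness).

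For part (b): since $\phi$ is proper and $S$ is smooth, $\phi_*\Ql$ is a direct sum of shifted semisimple perverse sheaves (decomposition theorem), and Tate-ness is a property preserved under the constructions involved. I would argue inductively along the chain: $\Ql$ on $\zowt{k}$ is Tate; each step convolves with $\zos{i}$, i.e. pushes forward along a $\PP^1$-bundle and then along the projection forgetting one intermediate flag, and both operations preserve the Tate property of $G$-equivariant complexes on these varieties (this is the same mechanism by which the $H_w$'s and $\utH_w$'s are Tate — cf. the discussion after Proposition~\ref{thm_explicit} and \cite{BGS}). Equivalently, at the level of Grothendieck groups the class $[\phi_*\Ql]$ is computed by the explicit formulas~\eqref{eq_explicit} for right multiplication by the $\utH_{s_j}$ and the analogous left-multiplication formulas applied to the class $\utH_{\tw_k}$, which lie in $\CR$, so $\phi_*\Ql$ has a class in the Grothendieck group of Tate sheaves; combined with semisimplicity this forces $\phi_*\Ql$ itself to be Tate.

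The main obstacle I expect is part (a) — precisely pinning down the triple $(w_1,k,w_2)$ (or rather a valid choice of it) so that $\phi(S)=\zowt{}$ with $\phi$ birational onto the orbit. The combinatorics of which $\GL(V)$-orbit one lands in after convolving an ordinary Bruhat cell with the mirabolic orbit $\Omega_{\tw_k}$ is governed by the subtle interaction between the permutation $w$ and the subset $\beta$ encoded in $\sigma(\tw)$; getting the reduced words on the two sides to produce genuinely the closure $\zowt{}$ (and not a larger or smaller variety) and to be an isomorphism over the open orbit is where the real verification lies. Everything after that — smoothness of $S$, properness of $\phi$, and the Tate conclusion — is formal given the base case and the $\PP^1$-bundle structure.
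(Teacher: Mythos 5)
Your strategy for part (a) is essentially the paper's, but your treatment of part (b) has a genuine circularity that would sink it.

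On part (a): the paper does exactly the thing you sketch — build up $S$ as an iterated $\zos{i}$-convolution around the central smooth piece $\zowt{k}$ — but avoids the difficulty you flag (pinning down $(w_1,k,w_2)$ in closed form) by a cleaner induction on $\ell(\tw)$. The inductive step only needs the easy fact that for $w\neq\id$ one can find a simple reflection $s_i$ with either $ws_i<w$ and $\tw s_i\in RB$, or $s_iw<w$ and $(s_iw,\beta)\in RB$; one then glues one more $\zos{i}$ onto the resolution of the shorter element. This is the same iterated-$\PP^1$-bundle picture you describe, and your observations about the base case ($\zowt{k}$ being the total space of a sub-bundle of the trivial bundle over the diagonal, hence smooth with Tate $\Ql$) and about smoothness and properness of $S$ and $\phi$ are all correct. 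So on (a) you have the right idea; the paper's induction just supplies the bookkeeping you weren't sure how to do.

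On part (b) there is a real gap. Your argument is: $\phi$ proper, $S$ smooth, so by decomposition $\phi_*\Ql$ is a direct sum of shifted IC-sheaves, and since its class in the Grothendieck group is computed by the formulas of Proposition~\ref{thm_explicit} and lands in $\CR$, the summands must be Tate. This is circular. The identification of the Grothendieck group of $G$-equivariant Tate sheaves on $X$ with $\CR$ with basis $\{H_\tw\}$ (Corollary~\ref{balda}) is itself deduced from the Tate property of the IC-sheaves (Corollary~\ref{pop}), which is what Proposition~\ref{ABC}.b) is being used to prove. Even setting circularity aside, ``$[\phi_*\Ql]$ equals a $\BZ[\bv,\bv^{-1}]$-combination of Tate classes'' together with semisimplicity does not force the actual IC summands to be Tate: the decomposition theorem gives you shifts of IC-sheaves of orbit closures, and whether each such IC-sheaf is Tate is precisely what is in question, so no K-theoretic bookkeeping can settle it. The paper instead proves (b) directly and geometrically: an induction along the chain of $\zos{i}$'s shows that every fiber of $\phi$ admits a closed stratification whose strata are isomorphic to $\AAbb^k\times\GG_m^n$; since these have Tate cohomology, the proper base change theorem gives Tate-ness of $\phi_*\Ql$. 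That fiber-paving argument is the missing ingredient and it does not follow from the formal decomposition-theorem setup you invoke.
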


\begin{proof}

a) We proceed by induction in $\ell(\tw)=\dim\owt{}-\bn$.
Assume the proposition~\ref{ABC} is true for any
$\tw '$ such that $\ell(\tw') < \ell(\tw)$.

Let $\tw=(w,\b)$. If $w=\id$ then $\tw=\tw_k$ for some k. We choose
$i_1,\ldots,i_r$ and $j_1,\ldots,j_s$ to be the empty sequences.
Then the map $\phi$ is
an embedding $\owt{}\hookrightarrow X$, and the proposition is true.
Otherwise $(w\ne \id)$ it is easy to show that either $ws_i<w$ and
$\tw s_i=(ws_i,s_i(\b))\in RB$, or $s_iw<w$ and $(s_iw,\b)\in RB$.
Without loss of generality we can restrict ourselves to the first case
(the second one is obtained replacing
$\tw$ by $\tw^{-1}=(w^{-1},w(\b))$ ).

Let $S'= S_{i,i_1,\ldots,i_r}^{j_1,\ldots,j_s;k},
\phi'=\phi_{i,i_1,\ldots,i_r}^{j_1,\ldots,j_s}  $. Then
\eql{z1} {S'=\zos{i}\uf S.}

By the induction hypothesis, $S$ contains an open dense set
mapping isomorphically by $\phi$ onto $\owts{i}$.
It follows that a map $\O_s\uf\owts{i}\to X$
has an image lying in $\zowt{}$, moreover,
this map is an isomorphism over $\owt{}$. According to ~(\ref{z1}),
$S'$ contains an open
dense subset isomorphic to $\os{i} \uf\owts{i}$,
hence $\phi'(S')=\zowt{}$,
and $\phi^{-1}(\owt{})$ is isomorphic to $\owt{}$.

\medskip
b) We will prove by induction that any fiber of $\phi$
is paved by the pieces isomorphic to
$\AAbb^k\times\GG^n_m$. Moreover, the union of pieces constructed at each
step is a closed subvariety of the fiber.
Since the cohomology of $\AAbb^k\times\GG^n_m$ is Tate, the assertion will
then follow.

If $r=s=0$ then any nonempty fiber is just a point,
and the statement is obvious.
Otherwise without loss of generality we can assume $r>0$.

Let
$S=S_{i_1,i_2,\ldots,i_r}^{j_1,\ldots,j_s;k}$ and
$S'=S_{i_2,\ldots,i_r}^{j_1,\ldots,j_s;k}$.
We have a commutative diagram:

\begin{equation}
\label{d1}
\xymatrix{
S=\O_{s_{i_1}}\times_{{}_ {\Fl(V)}} S'
\ar[rr]^(.43){\tilde {\pi}}
\ar[rd]_\phi
  && \O_{s_{i_1}}\times \Fl(V)\times V
\ar[dl]|{\hspace{.4cm}\psi=pr_1\times id_{\Fl(V)}\times id_V}
\\
&X\\
}
\end{equation}

where $\tilde{\pi}(F_0,\ldots,F_r,F'_1,\ldots,F'_s)=((F_0,F_1),F'_s,v)$.

\bigskip
It is easy to see that the fibers of the map $\psi$ are isomorphic to $\PP^1$.
For each point $x\in X$ we obtain the corresponding map
$\pi:\phi^{-1}(x)\to\psi^{-1}(x)\cong\PP^1$.
We have the following commutative diagram, whose
middle part coincides with diagram~\eqref{d1}:
\begin{equation}
\label{d2}
\xymatrix{
\phi^{-1}(x) \ar[rrrr]^\pi  \ar[rrdd] \ar@{^{(}->}@<-1ex>[ddd]
&&&& \psi^{-1}(x)\cong\PP^1 \ar[ddll]
        \ar@{^{(}->}@<2ex>[ddd]
\\
\\
&&{\strut x} \ar@{^{(}->}[ddd]\\
S=\O_{s_{i_1}}\times_{{}_ {\Fl(V)}} S'
\ar[rrrr]^(.43){\tilde {\pi}}
\ar[rrdd]_\phi
\ar@<-1ex>[ddd]_{projection}
  &&&& \O_{s_{i_1}}\times \Fl(V)\times V
\ar@<2ex>[ddd]^{pr_2\times id_{\Fl(V)}\times id_V}
\ar[ddll]|{\hspace{.4cm}\psi=pr_1\times id_{\Fl(V)}\times id_V}
\\
\\
&&X\\
S' \hspace{.5cm} \ar[rrrr]_{\phi'} &&&&{\hspace {.6cm} X}\\
}
\end{equation}
All the 4 squares in this diagram are Cartesian.

Denote by $\varkappa : \psi^{-1}(x)\to X$ the composition of maps
from the commutative diagram~\ref{d2}. This map is an embedding.

For each point $y\in\psi^{-1}(x)$ we have
$\pi^{-1}(y)\cong(\phi')^{-1}(\varkappa(y))$.
By the induction hypothesis, all the fibers of $\phi'$
can be decomposed into pieces of required form.
If $x=(F,F',v)$ then the image $L$ of the map $\varkappa$ consists of triples
$(F,F'',v)$ such that
$(F',F)\subset\zos{i}$. For $x'\in X$ the fiber
$(\phi')^{-1}(x')$ depends only on the orbit $\ot{u}$ which contains $x'$.
The line $L$ can intersect 2 or 3 such orbits; one intersection is open
in $L$, and any other intersection is just one point.

Let $U=L\bigcap \ot{u}$ be the intersection open in $L$.
Since all the fibers of $\pi$ admit a required decomposition, it is enough
to construct the decomposition of the set $\pi^{-1}(U')\cong\phi{-1}(U)$
where $U'=\varkappa{-1}(U)\cong U$. This follows from the fact that the bundle
$(\phi')^{-1}(U)\to U$ is trivial.

Indeed, in this case for any $x'\in U$ we have
$\phi'^{-1}(U)\cong U\times(\phi')^{-1}(x')$,
because $\phi'^{-1}(x')$ admits the required decomposition, and
$U$ is isomorphic to either $\AAbb^1$ or $\GG_m$.

It remains to prove the triviality of the bundle $(\phi')^{-1}(U)\to U$.
Note that the bundle $S'\to X$ is $GL(V)$-equivariant. Choose a point
$x'\in U$ and consider a map $GL(V)\to X$, given by $g\to g\cdot x'$.
Then the induced bundle $S\times_{{}_X} GL(V)\to GL(V)$ is trivial, so it is
enough to prove that there exists the dotted arrow in the diagram

\begin{equation}\label{d3}
\xymatrix{
U\ar@{^{(}->}[rr] \ar@{-->}[rd] &&X\\
&GL(V)\ar[ur]_{\cdot x'}
}
\end{equation}

This can be checked directly.

So the proof of proposition~\ref{ABC} is finished.

\end{proof}

\begin{cor}\label{corABC}
Bimodule $\R$ is generated by the elements $e_i=T_{\tw_i}$.
\end{cor}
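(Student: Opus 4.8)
The corollary asserts that the mirabolic bimodule $\R$ (equivalently $\CR$, or its integral form $\bR$) is generated as an $H$-bimodule by the finitely many elements $e_i = T_{\tw_i}$ attached to the orbits $\Omega_{\tw_i}$ with $w=\id$, $\beta_i = \{1,\ldots,i\}$. The natural strategy is to deduce this directly from Proposition~\ref{ABC}, which provides, for each $\tw\in RB$, a Demazure-type resolution $\phi\colon S \to \overline{\Omega_\tw}$ with $S = \zos{i_1}\uf\cdots\uf\zos{i_r}\uf\zowt{k}\uf\zos{j_1}\uf\cdots\uf\zos{j_s}$, which is an isomorphism over $\Omega_\tw$ and has $\phi_*\Ql$ Tate.

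First I would translate the geometry of Proposition~\ref{ABC}(a)(b) into an identity in the Grothendieck group $\CR$. The fibered product $S$ is, by its very definition, a composition of convolutions: pushing forward $\Ql$ along the maps $\zos{i_p}$ and $\zos{j_q}$ realizes (up to shifts and twists) left and right multiplication by the Kazhdan--Lusztig generators $\utH_{s_{i_p}}$ and $\utH_{s_{j_q}}$ on $\CH$ and on $\CR$, while $\zowt{k}$ contributes the element $\utH_{\tw_k}$ (which by the definition in~\ref{ny} is an explicit $\BZ[\bv,\bv^{-1}]$-combination of the $H_{\tw_j}$, hence lies in the sub-bimodule generated by the $e_j$). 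Concretely, $[\phi_*\Ql]$ (suitably shifted/twisted) equals, on one hand, a bimodule product $\utH_{s_{i_1}}\cdots\utH_{s_{i_r}}\cdot \utH_{\tw_k}\cdot \utH_{s_{j_1}}\cdots\utH_{s_{j_s}}$, and on the other hand — because $\phi$ is an isomorphism over the open orbit $\Omega_\tw$ and $\phi_*\Ql$ is Tate — it is of the form $H_\tw$ plus a $\BZ[\bv,\bv^{-1}]$-combination of classes $H_{\tw'}$ with $\tw'$ lying in the boundary $\overline{\Omega_\tw}\setminus\Omega_\tw$, i.e.\ with $\ell(\tw') < \ell(\tw)$.

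The corollary then follows by induction on $\ell(\tw)$. The base case $\ell(\tw)$ minimal is exactly $\tw = \tw_0$ (the zero vector, $w=\id$), which is one of the generators. For the inductive step: by the displayed identity, $H_\tw$ equals the bimodule product $\utH_{s_{i_1}}\cdots\utH_{\tw_k}\cdots\utH_{s_{j_s}}$ — which lies in the $H$-sub-bimodule generated by $\utH_{\tw_k}$, hence by the $e_j$ — minus a combination of the lower $H_{\tw'}$, each of which lies in the generated sub-bimodule by the inductive hypothesis. Since the $H_\tw$ form a $\BZ[\bv,\bv^{-1}]$-basis of $\CR$ (established just above Proposition~\ref{ABC}), this shows every basis element, hence all of $\CR$ (and by specialization $\R$), is generated by the $e_i$.

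The main obstacle is purely bookkeeping: one must check carefully that the convolution picture is set up with the correct shifts and Tate twists so that the pushforward $\phi_*\Ql$ is genuinely identified with the asserted product of Kazhdan--Lusztig generators (as opposed to some shifted/dualized variant), and that the "error terms" supported on the boundary are genuinely $\BZ[\bv,\bv^{-1}]$-linear combinations of the $H_{\tw'}$ with strictly smaller $\ell$ — which is where the Tate property from Proposition~\ref{ABC}(b) is essential, since it guarantees the relevant stalk cohomology contributes only Tate classes and no exotic pieces appear. Once the normalization of $\ell(\tw)$ and of the twists is pinned down, the argument is a one-line induction; I would relegate the shift/twist verification to a brief remark rather than spell it out in full.
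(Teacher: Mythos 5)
Your proposal is correct and takes essentially the same route as the paper: Proposition~\ref{ABC}(a) gives a Demazure-type convolution expression whose leading term is the class of $\Omega_\tw$ (since $\phi$ is an isomorphism over the open orbit and $\phi(S)=\overline{\Omega}_\tw$), after which one inducts on $\ell(\tw)$. The paper writes the identity in the $T$-basis rather than the $\utH$-basis, but that is cosmetic. One small over-claim: the Tate property of $\phi_*\Ql$ from Proposition~\ref{ABC}(b) is not actually needed for the coefficients of the boundary terms to lie in $\BZ[\bv,\bv^{-1}]$ — that is automatic from the explicit $\BZ[\bv,\bv^{-1}]$-bimodule structure established in Proposition~\ref{thm_explicit} (part (b) is used later, in Corollary~\ref{pop}, for the Tate property of the IC-sheaves). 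Also note the paper writes the product with the $j_q$'s on the left and the $i_p$'s on the right, the opposite of what the fibered-product order in the definition of $S$ suggests; your ordering $\utH_{s_{i_1}}\cdots\utH_{\tw_k}\cdots\utH_{s_{j_s}}$ is the one that matches the convolution formula $(f*g)(F_1,F_2,v)=\sum_F f(F_1,F)g(F,F_2,v)$ together with the description of $\phi\colon (F_0,\ldots,F_r,F_0',\ldots,F_s',v)\mapsto(F_0,F_s',v)$.
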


\noindent{\it Proof.\/} We prove by induction on $\ell(\tw)$ that
$T_{\tw}\in \sum_i \H e_i\H$.

Choose $i_1,\ldots,i_r,j_1\ldots,j_s$ and $k$ as in Proposition~\ref{ABC}.
Then
$$T_{j_s}\cdot\ldots\cdot T_{j_1}\cdot T_{\tw_k}\cdot
T_{i_r}\cdot\ldots\cdot T_{i_1}=T_{\tw}+
\sum_{{\tilde u}<\tw}a_{\tilde u}T_{\tilde u}$$
where $a_{\tilde u}\in\BZ[\bv,\bv^{-1}]$, and
${\tilde u}<\tw$ means $\ot{u}\subset{\overline {\ot{w}} }$.
The left hand side of this equality belongs to $\H e_k \H$ (recall
that $e_k=T_{\tw_k}$). Besides, by the induction hypothesis,
$T_{\tilde u}\in \sum_i \H e_i \H$
  for each ${\tilde u}<\tw$. Hence, $\sum_{\tilde u}
a_{\tilde u}T_{\tilde u}\in \sum_i \H e_i \H$.

From this we can conclude that  $T_{\tilde u}\in \sum_i \H e_i \H$.   \qed

\begin{cor}
\label{pop}
For any $\tw\in RB$, the sheaf
$j_{!*}\Ql[\ell(\tw)+\bn](\frac{\ell(\tw)+\bn}{2})$ is Tate.
\end{cor}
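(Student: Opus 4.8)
The plan is to deduce the Tate property of the intersection cohomology sheaf $j_{!*}\Ql[\ell(\tw)+\bn](\frac{\ell(\tw)+\bn}{2})$ from the Demazure-type resolutions constructed in Proposition~\ref{ABC}, following the standard argument that underlies the analogous statement for flag varieties in~\cite{BGS}. The key point is that Proposition~\ref{ABC} produces, for each $\tw\in RB$, a proper map $\phi\colon S\to\overline\Omega_\tw$ which is an isomorphism over the open orbit $\Omega_\tw$ and whose pushforward $\phi_*\Ql$ is a Tate complex; moreover $S$ is smooth (being an iterated fibration with $\PP^1$-fibers over a point), so $\phi$ is a resolution of singularities of $\overline\Omega_\tw$.

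First I would invoke the decomposition theorem: since $\phi$ is proper and $S$ is smooth of the appropriate dimension, $\phi_*\Ql[\dim S](\tfrac{\dim S}{2})$ decomposes as a direct sum of shifted simple perverse sheaves (with Tate twists), and because $\phi$ is an isomorphism over $\Omega_\tw$, the summand $j_{!*}\Ql[\ell(\tw)+\bn](\frac{\ell(\tw)+\bn}{2})$ appears once, with no shift, and all other summands are supported on $\overline\Omega_\tw\setminus\Omega_\tw$. Next, I would argue by induction on $\ell(\tw)$: all the other summands are IC sheaves of orbits $\Omega_{\tilde u}$ with $\tilde u<\tw$, so by the inductive hypothesis they are Tate; since $\phi_*\Ql$ is Tate by Proposition~\ref{ABC}(b), and the Grothendieck group of Tate complexes is free on the classes $H_\tw$ (as noted in the text, using triviality of the relevant equivariant fundamental groups), the summand $j_{!*}\Ql[\ell(\tw)+\bn](\frac{\ell(\tw)+\bn}{2})$ must itself be Tate. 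The base case $w=\id$ is immediate since then $\Omega_{\tw_k}$ has a closure that is already smooth, or one reads it off directly from the Demazure resolution with empty sequences.

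An alternative, more elementary route avoiding the decomposition theorem would be to use the explicit paving of the fibers of $\phi$ by pieces isomorphic to $\AAbb^k\times\GG_m^n$ established in the proof of Proposition~\ref{ABC}(b), together with the fact that $\phi$ is small or semismall over suitable strata, to identify the stalks and costalks of $j_{!*}$ directly; but tracking the perverse truncation through the strata of $\overline\Omega_\tw$ is more delicate, so I would prefer the decomposition-theorem argument, which is clean once one knows $S$ is smooth and $\phi_*\Ql$ is Tate.

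The main obstacle I anticipate is bookkeeping: one must be careful that the ``other summands'' in the decomposition are genuinely IC sheaves of orbits $\Omega_{\tilde u}$ with $\ell(\tilde u)<\ell(\tw)$, so that the induction closes. This requires knowing that every orbit meeting $\overline\Omega_\tw$ is itself of the form $\Omega_{\tilde u}$ (which holds since the orbits stratify $X$) and that the closure relation is compatible with $\ell$, i.e. $\Omega_{\tilde u}\subset\overline\Omega_\tw$ with $\tilde u\ne\tw$ forces $\ell(\tilde u)<\ell(\tw)$; this is automatic from $\ell(\tw)=\dim\Omega_\tw-\bn$. The only genuinely geometric input beyond Proposition~\ref{ABC} is the triviality of the $G$-equivariant fundamental groups of the orbits $\Omega_\tw$, already recorded in the text, which guarantees that Tate-ness of a complex can be detected on the level of the Grothendieck group spanned by the $H_\tw$.
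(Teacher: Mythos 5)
Your proposal is essentially the paper's own proof, which is the one-liner ``Follows from Proposition~\ref{ABC}.b) by the Decomposition Theorem''; you correctly identify the decomposition theorem applied to the proper map $\phi$ (birational onto $\overline\Omega_\tw$, from a smooth source $S$ built as an iterated $\PP^1$-fibration over the smooth variety $\overline\Omega_{\tw_k}$) as the mechanism. One small caveat: the final step should not be phrased through the Grothendieck group. Knowing that the class $[A]=[\phi_*\Ql]-[\text{other Tate summands}]$ lands in the Tate lattice does not by itself force the complex $A$ to be Tate, since distinct complexes can share a class. The correct and more direct fact is that a direct summand of a Tate complex is Tate: on each orbit $\Omega_{\tilde u}$, triviality of the $G$-equivariant geometric fundamental group makes Tate-ness a condition on Frobenius weights of the (geometrically constant) cohomology sheaves of $i^*$ and $i^!$, and this condition is clearly inherited by direct summands. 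With that substitution your induction on $\ell(\tw)$ is not even needed, though it also works.
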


\proof
Follows from Proposition~\ref{ABC}.b) by the Decomposition Theorem.
\qed

\begin{cor}
\label{balda}
The Grothendieck group of the derived constructible $G$-equivariant category of
Tate Weil $\ol{\mathbb Q}_l$-sheaves on $X$ is isomorphic to $\CR$ as an
$\CH$-bimodule with respect to convolution.
\end{cor}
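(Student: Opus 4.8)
The plan is to combine the Tate-property result (Corollary~\ref{pop}) with the general machinery of mixed Hodge/Weil sheaves developed in~\cite{BGS} in exactly the way that the analogous statement for $\CH$ and $\Fl(V)\times\Fl(V)$ is proved. First I would observe that the $G$-equivariant derived category of Tate Weil complexes on $X$ is stratified by the finitely many orbits $\Omega_\tw$, each of which has trivial $G$-equivariant geometric fundamental group, so the only simple objects (up to shift and twist) are the Goresky--MacPherson extensions $j_{!*}\Ql[\ell(\tw)+\bn](\frac{\ell(\tw)+\bn}{2})$. By Corollary~\ref{pop} these are all Tate, and by the Remark between Lemmas~4.4.5 and~4.4.6 of~\cite{BGS} the shriek extensions $j_!\Ql[\ell(\tw)+\bn](\frac{\ell(\tw)+\bn}{2})$ are then Tate as well. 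This yields two things: the category of Tate sheaves on $X$ is well-defined (closed under the relevant operations), and its Grothendieck group is free over $\BZ[\bv,\bv^{-1}]$ with basis $\{H_\tw\}$, the classes of these shriek extensions, since the $j_!$ of the (one-dimensional, pure) local systems on the strata give a basis of the Grothendieck group of any stratified category with trivial fundamental groups.

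Next I would identify this Grothendieck group with $\CR$ as an abelian group: both are free $\BZ[\bv,\bv^{-1}]$-modules on $RB$, with $H_\tw$ corresponding to $H_\tw$. It remains to check that the $\CH$-bimodule structure on the Grothendieck group, given by convolution with the sheaves on $\Fl(V)\times\Fl(V)$ representing $H_w$ (and hence $\utH_s$), matches the bimodule structure on $\CR$ defined by the explicit formulas~\eqref{eq_explicit}. For this it suffices, since $\CH$ is generated by the $\utH_s$, to verify that convolving the class $H_\tw$ on the right by $\utH_s$ produces the right-hand side of~\eqref{eq_explicit}. Geometrically $H_\tw\utH_s$ is computed by the correspondence $\O_s \uf \overline\Omega_\tw \to X$ (a $\PP^1$-bundle followed by the projection), and one reads off the answer by the standard case analysis on whether the orbit $\Omega_\tw$ is ``swept'' by the $\PP^1$-fibers, exactly as in the Demazure-resolution computations of Proposition~\ref{ABC}; the resulting polynomial identities in $\bv$ are precisely the five cases of~\eqref{eq_explicit}, which were already established as the $\bq\mapsto q$ specialization in Proposition~\ref{thm_explicit} and are manifestly of geometric Tate origin. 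Alternatively, and more cheaply, one can invoke Corollary~\ref{corABC}: the bimodule is generated over $\CH$ by the $e_i=T_{\tw_i}$, whose convolution action is visibly the geometric one (the $\overline\Omega_{\tw_i}$ are the closures resolved trivially in Proposition~\ref{ABC}), so the two bimodule structures, agreeing on generators and on the $\CH$-action, coincide.

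The main obstacle is the bookkeeping in the second step: one must be careful that the convolution product in the geometric Grothendieck group is computed with the correct normalizations (shifts and half-twists) so that $H_\tw \cdot \utH_s$ really lands on the stated $\BZ[\bv,\bv^{-1}]$-combination, and that the decomposition-theorem summands appearing in the non-multiplicity-free cases (the second, third, and fifth cases of~\eqref{eq_explicit}) are accounted for with the right signs. Here the Tate property proved in Corollary~\ref{pop} is exactly what guarantees that no non-Tate summands appear and that the Grothendieck-group computation is governed entirely by the combinatorics of weights, reducing the verification to the polynomial identities already recorded in Proposition~\ref{thm_explicit}. Once those are matched, the isomorphism of $\CH$-bimodules is immediate.
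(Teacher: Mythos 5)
Your proposal is correct and follows essentially the same route as the paper: the paper states Corollary~\ref{balda} with no separate proof, treating it as an immediate consequence of the paragraph preceding Proposition~\ref{ABC} (Tate IC-sheaves imply Tate shriek extensions, trivial $G$-equivariant fundamental groups of the orbits, hence the $H_\tw$ give a $\BZ[\bv,\bv^{-1}]$-basis of the Grothendieck group) together with Corollary~\ref{pop}; your first alternative for matching the convolution bimodule structure with formulas~\eqref{eq_explicit} is the honest version of what the paper tacitly invokes. One small caveat: your ``cheaper'' alternative via Corollary~\ref{corABC} is not quite as cheap as stated --- knowing that the $e_i$ generate $\CR$ as an $\CH$-bimodule and that the geometric classes $e_i$ match does not by itself show that the two bimodule structures on the common basis $\{H_\tw\}$ coincide, since the isomorphism being sought is exactly what would let one propagate agreement from generators; the direct verification in your first alternative (or, equivalently, the sheaf--function dictionary applied to Proposition~\ref{thm_expl}) is still needed.
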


\subsection{Duality and the Kazhdan-Lusztig basis of $\CR$}
Recall the involution (denoted by $h\mapsto\overline{h}$) of $\CH$ which
takes $\bv$ to $\bv^{-1}$ and $\utH_w$ to $\utH_w$. It is induced by the
Grothendieck-Verdier duality on $\Fl(V)\times\Fl(V)$. We are going to
describe the involution on $\CR$ induced by the Grothendieck-Verdier
duality on $X$.

Recall the elements $\tw_i=(w,\beta_i)\in RB$ such that
$w=\id$ (the identity permutation), and $\beta_i=\{1,\ldots,i\}$, where
$i=0,\ldots,N$. We set
$\utH_{\tw_i}:=\sum_{0\leq j\leq i}(-\bv)^{j-i}H_{\tw_j}$. This is the class
of the selfdual (geometrically constant) IC sheaf on the closure of the
orbit $\Omega_{\tw_i}$.

\begin{prop}
\label{duality}
a) There exists a unique involution $r\mapsto\overline{r}$ on $\CR$ such
that $\overline\utH_{\tw_i}=\utH_{\tw_i}$ for any $i=0,\ldots,N$, and
$\overline{hr}=\overline{h}\overline{r}$, and
$\overline{rh}=\overline{r}\overline{h}$ for any $h\in\CH$ and $r\in\CR$.

b) The involution in a) is induced by the Grothendieck-Verdier duality on $X$.
\end{prop}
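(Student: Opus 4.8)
The plan is to establish existence and uniqueness of the involution $r\mapsto\overline r$ on $\CR$ purely algebraically in part a), using the fact that $\CR$ is generated over $\CH$ by the elements $e_i = H_{\tw_i} = \utH_{\tw_i}$ on the nose (recall $\utH_{\tw_0}=H_{\tw_0}$, and more generally the two bases agree at $\tw_i$? — no: rather $e_i := T_{\tw_i}$ and by Corollary~\ref{corABC} the $H_{\tw_i}$ generate $\CR$ as a bimodule, equivalently the $\utH_{\tw_i}$ do, since they span the same $\CH$-bimodule). For uniqueness: any $r\in\CR$ can be written as a finite sum $r = \sum_k h_k' \utH_{\tw_{i_k}} h_k''$ with $h_k',h_k''\in\CH$, and then $\bar r$ is forced to equal $\sum_k \overline{h_k'}\,\utH_{\tw_{i_k}}\,\overline{h_k''}$; so there is at most one such map. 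For existence, I must check this formula is well-defined, i.e. independent of the chosen expression. The clean way is to realize $\CR$ as a quotient: there is a surjection of $\CH$-bimodules $\bigoplus_{i=0}^N \CH\otimes_{\BZ[\bv,\bv^{-1}]}\CH \twoheadrightarrow \CR$ sending the $i$-th generator to $\utH_{\tw_i}$, and $\overline{\phantom{h}}$ is already defined on the source (acting as $\overline{h'\otimes h''} = \overline{h'}\otimes\overline{h''}$, which is a well-defined involution because $\overline{\phantom{h}}$ is an anti-... no, it is an algebra automorphism composed with nothing — it is $\BZ$-linear, multiplicative, and fixes $\utH_w$; in particular $\overline{h_1 h_2}=\overline{h_1}\,\overline{h_2}$, so $h'\otimes h''\mapsto \overline{h'}\otimes\overline{h''}$ descends to the balanced tensor product). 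Thus it suffices to show the kernel $K$ of the surjection is stable under this involution on the source. That is the technical heart of part a).

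To show $\overline K \subseteq K$, I would identify generators of $K$ explicitly. The relations defining $\CR$ as a bimodule over $\CH$ are exactly the straightening relations expressing $\utH_{\tw_i}\cdot\utH_s$ and $\utH_s\cdot\utH_{\tw_i}$ (for simple reflections $s$) back in terms of the $H_\tw$, hence in terms of bimodule-products of the generators $\utH_{\tw_j}$ — these come from Proposition~\ref{thm_explicit} after a change of basis, together with the left-right symmetry from the anti-automorphism $\tw\mapsto\tw^{-1}$. Concretely, for each $i$ and each simple $s$ one has an identity in $\CR$ of the form $\utH_{\tw_i}\utH_s = \sum_j \utH_{\tw_j} c_j(\bv) + (\text{left-module terms})$, or more usefully a two-sided relation; the element $\utH_{\tw_i}\otimes H_s - (\text{its rewriting})$ and its left analogue generate $K$. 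I then apply the bar involution to each such relation and check it again lies in $K$: since $\overline{\utH_s} = \utH_s$ and the structure constants are Laurent polynomials in $\bv$ mapped to their $\bv\leftrightarrow\bv^{-1}$ conjugates, what must be verified is that the \emph{barred} version of each straightening relation is a consequence of the original ones. The cleanest route is to note that all these relations are equivalent to the single family of formulas in Proposition~\ref{thm_explicit}; so I reduce to the statement: \emph{the bar-conjugate of each line of~\eqref{eq_explicit} holds}. Because $\utH_s = H_s - \bv^{-1}$ is bar-invariant but $H_\tw$ is not, I would re-expand: write~\eqref{eq_explicit} in the form $H_\tw\utH_s = \sum_{\tw'} p_{\tw,\tw'}(\bv) H_{\tw'}$ and separately record the known expansion of $H_{\tw'}$ in the (to-be-defined) bar-invariant KL-type basis; but circularity threatens here, so instead I keep everything on the level of the finitely-presented bimodule and verify relation-by-relation, which is a bounded computation.

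For part b), the strategy is that the Grothendieck--Verdier duality $\mathbb D$ on $X$ induces a $\BZ$-linear involution $D$ on $\CR$ (the Grothendieck group of $G$-equivariant Tate complexes, by Corollary~\ref{balda}) which is compatible with the bimodule structure in exactly the twisted sense: $D(h\cdot r) = \overline h\cdot D(r)$ and $D(r\cdot h) = D(r)\cdot\overline h$, because convolution of complexes intertwines Verdier duality with the duality on $\Fl(V)\times\Fl(V)$ up to the standard shift/twist bookkeeping (this is the same computation as for $\CH$, see~\cite{BGS,So}). Then it remains to identify $D$ with the algebraic involution from a), and by the uniqueness in a) it is enough to check $D(\utH_{\tw_i}) = \utH_{\tw_i}$. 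But $\utH_{\tw_i}$ is by construction the class of the selfdual IC sheaf $j_{!*}\Ql[\ell(\tw_i)+\bn](\tfrac{\ell(\tw_i)+\bn}2)$ on $\overline\Omega_{\tw_i}$ — here one uses that $\overline\Omega_{\tw_i}$ has a geometrically constant IC by the explicit description, and that it is Tate by Corollary~\ref{pop} — and selfduality of the IC sheaf gives $D(\utH_{\tw_i})=\utH_{\tw_i}$ immediately. So part b) follows formally once a) is in hand.

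The main obstacle I anticipate is the well-definedness in part a): pinning down a manageable generating set for the kernel $K$ and checking its stability under the bar involution without running into circular reasoning with the not-yet-constructed KL basis of $\CR$. One must be disciplined about working entirely within the free bimodule $\bigoplus_i \CH\otimes\CH$ and the relations coming from~\eqref{eq_explicit} (and their left-handed mirrors via $\tw\mapsto\tw^{-1}$), verifying that applying $\bv\mapsto\bv^{-1}$, $\utH_s\mapsto\utH_s$ to each relation yields another relation. Everything else — uniqueness, the twisted bimodule compatibility, and the geometric identification in b) — is routine given the results already proved in the excerpt.
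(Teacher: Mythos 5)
Your uniqueness argument is precisely the paper's: $\CR$ is generated as an $\CH$-bimodule by $\{\utH_{\tw_i}\}$ (Corollary~\ref{corABC}), so the involution is determined on all of $\CR$ once it fixes the generators and is $\overline{(\cdot)}$-semilinear for the bimodule action. No issues there.

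The gap is in your existence argument. You propose to realize $\CR$ as a quotient of the free bimodule $\bigoplus_i \CH\otimes\CH$ and check the kernel $K$ is stable under the coordinatewise bar involution, and you flag this yourself as ``the technical heart'' and ``the main obstacle.'' You never actually carry it out — you only gesture at a ``bounded computation'' relation-by-relation, and you note the danger of circularity because the relations~\eqref{eq_explicit} are written in the $H_\tw$ basis while the bar involution is only transparent on the (as yet undefined) KL basis. This step is genuinely unfinished, and there is no reason to attempt it: the geometric argument you use for part~b) already produces the involution. Grothendieck--Verdier duality on $X$ gives a $\BZ$-linear involution $D$ on the Grothendieck group $\CR$ (Corollary~\ref{balda}); $D$ satisfies $D(hr)=\bar h\, D(r)$ and $D(rh)=D(r)\bar h$ by compatibility of duality with convolution; and $D(\utH_{\tw_i})=\utH_{\tw_i}$ because $\utH_{\tw_i}$ is by construction the class of the selfdual IC sheaf on $\overline\Omega_{\tw_i}$ (which is geometrically constant and Tate by Corollary~\ref{pop}). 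Thus $D$ is an involution satisfying the conditions of a), which gives existence; uniqueness says it is the only such involution, which is exactly b). In short: your part~b) argument \emph{is} the existence proof, and once you notice that, the algebraic detour and its unresolved well-definedness check can be discarded entirely. This is how the paper proceeds.
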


\proof
The uniqueness in a) follows since $\CR$ is generated as an
$\CH$-bimodule by the set
$\{\utH_{\tw_i},\ i=0,\ldots,N\}$, according to Corollary~\ref{corABC}.
Now the Grothendieck-Verdier duality on $X$ clearly induces the involution
on $\CR$ satisfying a); whence the existence and b). \qed

Now let $\tw_1<\tw_2$ stand for the adjacency Bruhat order on $RB$
described combinatorially in~\cite{M},~section~1.2.

\begin{prop}
\label{KL basis}
a) For each $\tw\in RB$ there exists a unique element $\utH_\tw\in\CR$
such that $\overline\utH_\tw=\utH_\tw$, and
$\utH_\tw\in H_\tw+\sum_{\tilde{y}<\tw}\bv^{-1}
\BZ[\bv^{-1}]H_{\tilde{y}}$.

b) For each $\tw\in RB$ the element $\utH_\tw$ is the class of the
selfdual $G$-equivariant IC-sheaf with support $\bar{\Omega}_\tw$.
In particular, for $\tw=\tw_i$, the element $\utH_{\tw_i}$ is consistent
with the notation introduced before Proposition~\ref{duality}.
\end{prop}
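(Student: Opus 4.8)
The statement is the standard Kazhdan–Lusztig existence-and-uniqueness result, transported from $\CH$ to the bimodule $\CR$. The plan is to follow the classical argument verbatim, replacing the one-sided Hecke action by the $\CH$-bimodule action and using the elements $\utH_{\tw_i}$ as the ``seeds'' that play the role of $\utH_e = H_e$ in the group case. Concretely: for part a), uniqueness is the usual triangularity argument. Suppose $h,h'$ are two bar-invariant elements both lying in $H_\tw + \sum_{\tilde y < \tw}\bv^{-1}\BZ[\bv^{-1}]H_{\tilde y}$. Then $h - h'$ is bar-invariant and lies in $\sum_{\tilde y < \tw}\bv^{-1}\BZ[\bv^{-1}]H_{\tilde y}$; expanding in the $H_{\tilde y}$-basis and comparing the leading (lowest, in the Bruhat order) nonzero coefficient against its bar-image — which uses that $\overline{H_{\tilde y}} \in H_{\tilde y} + \sum_{\tilde z < \tilde y}\BZ[\bv,\bv^{-1}]H_{\tilde z}$, the triangularity of the bar involution with respect to the adjacency Bruhat order of \cite{M} — forces $h - h' = 0$.

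For existence I would argue by induction on $\ell(\tw)$ with respect to the adjacency Bruhat order. The base of the induction is $\tw = \tw_0 = (\id,\varnothing)$: here $H_{\tw_0}$ is already bar-invariant (it is $\utH_{\tw_0}$ of the preamble, the seed), so we take $\utH_{\tw_0} := H_{\tw_0}$. For the inductive step, pick $\tw$ of positive length and use the same case analysis as in the proof of Proposition~\ref{ABC}a): either there is a simple transposition $s=s_i$ with $\tw s < \tw$ and $\tw s \in RB$, or $s\tw < \tw$ with $(s_i w,\b)\in RB$; by applying the anti-automorphism $\tw\mapsto\tw^{-1}$ we may assume the first. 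By induction $\utH_{\tw s}$ exists and is bar-invariant. Now form $\utH_{\tw s}\,\utH_s$; this is bar-invariant because $\overline{\utH_s}=\utH_s$ and the bar involution on $\CR$ is a ring anti-homomorphism on each side (Proposition~\ref{duality}a). Using the explicit formula~\eqref{eq_explicit} for the right action of $\utH_s$ in the $H$-basis, the top term of $\utH_{\tw s}\,\utH_s$ is $H_\tw$ (with coefficient $1$), and all lower terms involve $H_{\tilde y}$ with $\tilde y \le \tw$ and coefficients in $\BZ[\bv,\bv^{-1}]$; the terms $H_{\tilde y}$ with $\tilde y < \tw$ whose coefficient has a nonzero constant or positive-power-in-$\bv$ part can be cancelled by subtracting a $\BZ[\bv,\bv^{-1}]$-combination of the previously constructed (bar-invariant) $\utH_{\tilde y}$, again by downward induction on $\ell(\tilde y)$ and triangularity. (A subtlety, as in the classical case, is that the coefficient extracted at each stage must be bar-invariant, which follows because $\utH_{\tw s}\,\utH_s$ and each $\utH_{\tilde y}$ are bar-invariant, so the residual discrepancy is a bar-invariant element supported below $\tw$, hence killable with bar-invariant coefficients.) This yields an $\utH_\tw$ with the required bar-invariance and triangularity; by part a) it is unique.

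For part b), the geometric input is already in hand: by Corollary~\ref{pop} the sheaf $j_{!*}\Ql[\ell(\tw)+\bn](\tfrac{\ell(\tw)+\bn}{2})$ is Tate, so its class $[j_{!*}\Ql[\ell(\tw)+\bn](\tfrac{\ell(\tw)+\bn}{2})]$ is a well-defined element of $\CR$ (using Corollary~\ref{balda}). It is selfdual under Grothendieck–Verdier duality, hence bar-invariant by Proposition~\ref{duality}b; and by the general properties of IC extensions its class expands as $H_\tw$ plus a $\bv^{-1}\BZ[\bv^{-1}]$-combination of $H_{\tilde y}$, $\tilde y < \tw$ (the stalks of the IC sheaf on smaller orbits contribute strictly positive weight, i.e. strictly negative powers of $\bv$, which is exactly the KL normalization — this is where one cites that the $G$-equivariant fundamental groups of orbits are trivial so no nontrivial local systems intervene, together with the parity/Tate structure). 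Therefore $[j_{!*}\Ql[\ell(\tw)+\bn](\tfrac{\ell(\tw)+\bn}{2})]$ satisfies the two defining conditions of $\utH_\tw$, and by uniqueness it equals $\utH_\tw$. Specializing $\tw = \tw_i$, the IC sheaf on $\overline\Omega_{\tw_i}$ is the geometrically constant one, whose class is $\sum_{0\le j\le i}(-\bv)^{j-i}H_{\tw_j}$ by a direct computation of the (smooth, since $\overline\Omega_{\tw_i}\cong \Fl(V)\times\Fl(V)\times F_{1,i}$-type geometry) cohomology, recovering the notation fixed before Proposition~\ref{duality}. The main obstacle is the bookkeeping in the existence step: verifying, from~\eqref{eq_explicit}, that $\utH_{\tw s}\,\utH_s$ really has leading coefficient $1$ at $H_\tw$ and only $\le\tw$ terms, and that the successive corrections can be chosen bar-invariant — everything else is formal once Propositions~\ref{duality} and~\ref{ABC} and Corollary~\ref{pop} are granted.
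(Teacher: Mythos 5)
Your proof of part~a) takes a genuinely different route from the paper's: the paper proves a) in one line by citing Lusztig's general lemma on canonical bases (\cite{Lu}, Lemma~24.2.1), which packages exactly this kind of existence-and-uniqueness for a bar involution that is unitriangular with respect to a partial order. You instead reproduce the classical Kazhdan--Lusztig recursion, building $\utH_\tw$ by descending from $\utH_{\tw s}\utH_s$ (or its left-hand analogue) and correcting by lower bar-invariant elements. Both are correct; your version is more explicit and self-contained, while the paper's is shorter because all the bookkeeping is outsourced to the cited lemma. Your part~b) is essentially the paper's argument (Corollary~\ref{pop} for the Tate property, Proposition~\ref{duality}~b) for selfduality, BBDG purity for the weight bounds giving the $\bv^{-1}\BZ[\bv^{-1}]$ triangularity).

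There is, however, a small but real gap in the base of your induction for part~a). You declare the base to be $\tw_0=(\id,\varnothing)$ alone and then say the inductive step applies whenever $\ell(\tw)>0$ "using the same case analysis as in Proposition~\ref{ABC}~a)." But that case analysis handles $w=\id$ as a separate base case, not as an inductive step: for $\tw=\tw_k=(\id,\{1,\dots,k\})$ with $k\geq1$ we have $\ell(\tw_k)=k>0$, yet there is no simple transposition $s$ with $\tw_k s<\tw_k$ or $s\tw_k<\tw_k$, so your inductive reduction does not apply. You must also treat every $\tw_k$, $0\leq k\leq N$, as a base case. This is cheap to fix: the elements $\utH_{\tw_k}=\sum_{0\leq j\leq k}(-\bv)^{j-k}H_{\tw_j}$ are already defined in the paper, they are bar-invariant by Proposition~\ref{duality}~a), and since $\tw_j<\tw_k$ for $j<k$ and $(-\bv)^{j-k}\in\bv^{-1}\BZ[\bv^{-1}]$ they satisfy the required triangularity. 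You do call the $\utH_{\tw_i}$ "seeds" at the outset, so the intent is visible, but the written induction as stated does not terminate on those elements.
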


\proof
a) is a particular case of~\cite{Lu},~Lemma~24.2.1.

b) We already know that $H_\tw$ is the class of
$j_!\Ql[\ell(\tw)+\bn](\frac{\ell(\tw)+\bn}{2})$, and
$j_{!*}\Ql[\ell(\tw)+\bn](\frac{\ell(\tw)+\bn}{2})$ is Tate.
Now b) follows from the Beilinson-Bernstein-Deligne-Gabber purity theorem
by the standard argument (see e.g.~\cite{Be},~section~6). \qed

\subsection{Pointwise purity}
We are now going to show that the sheaves
$j_{!*}\Ql[\ell(\tw')+\bn](\frac{\ell(\tw')+\bn}{2})$
are pointwise pure.
We choose a point $F_0\in\Fl(V)$ and denote by $B\in\GL(V)$ the corresponding
Borel subgroup.
We consider the preimage of $F_0$ under the second projection
$X':=X_{F_0}=pr_2^{-1}(F_0)
\subset X$, $X_{F_0}=\Fl(V)\times \{F_0\}\times V\cong \Fl(V)\times V$.
 Fix $\tw\in RB$, and let $\O'=\O_{\tw,F_0}=\O_\tw\cap X_{F_0}$
be the corresponding
$B$-orbit in~$X_{F_0}$. Choose a point $x\in X_{F_0}$.

\begin{lem} \label{Gm-action}
There exists a one-parameter subgroup $\chi:\ \GG_m\to B$ preserving $x$,
whose eigenvalues in the normal space $N_{x,X'}\O'$ are all positive.
\end{lem}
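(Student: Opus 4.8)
The plan is to produce the one-parameter subgroup $\chi$ explicitly in coordinates adapted to the orbit $\O'=\O_{\tw,F_0}$, in the spirit of the standard construction for Schubert cells under a contracting torus action. First I would fix a basis $\{e_1,\dots,e_N\}$ of $V$ compatible with the Borel $B$ in the usual way, so that $F_0$ is the standard flag $\langle e_1,\dots,e_i\rangle$ and $B$ is upper-triangular. Since $X_{F_0}\cong\Fl(V)\times V$ and $\O'$ is a $B$-orbit, I would choose a representative point $x=(F_1,F_0,v)\in\O'$ in the normal form given by Lemma~\ref{1} and equation~\eqref{v}: a basis $\{f_j\}$ with $F_{1,i}=\langle f_1,\dots,f_i\rangle$, $F_{0,j}=\langle f_{w^{-1}(1)},\dots,f_{w^{-1}(j)}\rangle$, and $v=\sum_{i\in\b}f_i$. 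The stabilizer $\Stab_B(x)$ contains a maximal torus of $B$ — namely the torus $T_x$ acting on the $f_j$ by scalars $t_j$ subject to the linear constraints imposed by fixing the flag $F_1$ (which forces $t_j$ to agree along the relevant coordinate subspaces) and by fixing $v$ (which forces $t_j=1$ for $j\in\b$, or more precisely that the relevant $t_j$ be equal). The idea is to find a cocharacter into this torus that contracts a normal slice to $\O'$ at $x$.

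Next I would identify the normal space $N_{x,X'}\O'$ with an explicit $T_x$-submodule of $T_xX'$ and check positivity of weights. The tangent space $T_xX'=T_{F_1}\Fl(V)\oplus T_vV$ decomposes under $T_x$ into weight lines indexed by the off-diagonal root directions for $\Fl(V)$ and the coordinate directions for $V$; the tangent space to the orbit $T_x\O'$ is spanned by the images of $\mathrm{Lie}(B)$. The quotient $N_{x,X'}\O'$ is then a sum of weight lines, and the weight of each is a $\BZ$-linear functional in the coordinates of a cocharacter $\chi(t)=\mathrm{diag}(t^{a_1},\dots,t^{a_N})$ lying in $T_x$. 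I would then invoke the general principle — which holds because $\O'$ is a single $B$-orbit whose closure is a union of $B$-orbits, i.e. a $B$-stable affine-paved subvariety, exactly as in the Bruhat/Schubert setting — that one can choose the exponents $a_j$ (dominantly enough, compatibly with the constraints defining $T_x$) so that every normal weight is \emph{strictly} positive. Concretely: order the coordinates so that the "expanding" directions of the Bruhat-type stratification all have positive weight; the constraints from fixing $v$ only identify certain $a_j$ with each other and never force a normal weight to vanish, because the orbit-tangent directions already account for all the vanishing weights.

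The key steps, in order, are: (1) set up the adapted basis and write $x$ in the normal form from Lemma~\ref{1}; (2) compute $\Stab_B(x)$ and extract a maximal torus $T_x\subset\Stab_B(x)$, described by explicit linear conditions on the scaling exponents; (3) decompose $T_xX'$ and $T_x\O'$ into $T_x$-weight spaces and hence read off the weights occurring in $N_{x,X'}\O'$; (4) exhibit a cocharacter $\chi\colon\GG_m\to T_x\subset B$ for which all these weights are positive, by choosing the exponents sufficiently "dominant" subject to the constraints. I expect step (3)–(4) to be the main obstacle: one must verify that the linear constraints cutting out $T_x$ (especially those coming from $v=\sum_{i\in\b}f_i$, which are genuinely new compared with the pure flag-variety case) still leave enough room to make \emph{all} normal weights simultaneously positive, rather than merely nonnegative. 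This is where the combinatorial condition on $(w,\b)$ defining $RB$ in Lemma~\ref{1} enters: that condition is precisely what guarantees the orbit closure is well-behaved enough for a contracting cocharacter to exist, and I would lean on it to rule out any weight-zero normal direction.
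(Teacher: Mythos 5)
Your outline (adapted basis, identify a subtorus of $B$ fixing $x$, decompose the normal space into weight lines, choose a cocharacter making all weights positive) is the same framework the paper uses, and steps (1)--(3) are sound. But the content of the lemma is entirely in step (4), and there you have a genuine gap: there is no ``general principle'' that a $B$-orbit in a $B$-variety admits a contracting cocharacter through a given point, and the phrase ``choose the exponents sufficiently dominant subject to the constraints'' does not describe what actually works here. In fact the cocharacter the paper writes down is \emph{not} dominant in any naive sense --- the exponents $k_i$ take both positive and negative values, and the constraint from fixing $v$ forces $k_i=0$ exactly on the set $\s$, not merely identifies exponents with each other. So ``pick a generic dominant cocharacter in $T_x$'' would in general produce normal weights of both signs.

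What the paper actually does is construct the exponents by an explicit inductive peeling: starting from $\b_1=\b$ and $\g_1=\{1,\dots,N\}\setminus\b$, it repeatedly extracts the ``frontier'' subsets $\s_r\subset\b_r$ (indices in $\b_r$ with no larger $j\in\b_r$ having larger $w$-value) and $\d_r\subset\g_r$ (indices in $\g_r$ with no smaller $j\in\g_r$ having smaller $w$-value), then sets $k_i=1-r$ for $i\in\s_r$ and $k_i=r$ for $i\in\d_r$. This gives $k_i=0$ precisely on $\s=\s_1$ (so $\chi$ fixes $v$), and the positivity of the normal weights is then a concrete combinatorial check tied to the specific values $1-r$ and $r$, not to an abstract dominance argument. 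Your proposal correctly identifies that the novelty over the pure flag-variety case comes from the vector $v$, but it leaves the actual construction --- which is the whole proof --- as an assertion. To close the gap you would need to exhibit exponents like the $k_i$ above and verify positivity on the normal weight lines, which is exactly the work the paper's short proof compresses into ``it is not hard to verify.''
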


\proof
Let $\tw=(w,\b)$, $x=(F_0,F_1,v)$.
Choose a basis $e_1,\dots,e_N$ of~$V$ such that
$F_{0,i}=\langle e_1,\dots,e_i\rangle$,
$F_{1,i}=\langle e_{w(1)},\dots,e_{w(i)}\rangle$,
$v=\sum_{i\in\s} e_i$ where $\s$ is defined by~\eqref{sigma}.
Define sets $\b_r,\g_r,\s_r,\d_i \subset\{1,\dots,N\}$,
 $i=1,2,\dots$ inductively as follows:
\begin{itemize}
\item $\b_1=\b$, $\g_1=\{1,\dots,N\}\setminus\b$;
\item $\s_r=\{i\in\b_r \mid \nexists j\in\b_r\:
(j>i\:\text{and}\:w(j)>w(i)) \}$;\\
   $\d_r=\{i\in\g_r \mid \nexists j\in\g_r\:(j<i\:\text{and}\:w(j)<w(i)) \}$;
\item $\b_{r+1}=\b_r\setminus\s_r$, $\g_{r+1}=\g_r\setminus\d_r$.
\end{itemize}
Let $(k_1,\dots,k_N)$ be the integer sequence given by $k_i=1-r$ if $i\in\s_r$,
and $k_i=r$ if $i\in\d_r$. Then it is not hard to verify that the homomorphism
$t\mapsto\operatorname{diag}(t^{k_1},\dots,t^{k_N})$ satisfies the required
condition.
\qed

\begin{prop}
\label{varshav}
The intersection cohomology sheaf
$j_{!*}\Ql[\ell(\tw')+\bn](\frac{\ell(\tw')+\bn}{2})$ of an orbit
$\Omega_{\tw'}\subset X$ is pointwise pure of weight zero.
\end{prop}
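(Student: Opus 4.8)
The plan is to deduce pointwise purity from the $\GG_m$-action produced in Lemma~\ref{Gm-action}, together with the pointwise purity of $j_{!*}$ sheaves already established for $\CR$ (Corollary~\ref{pop} gives the Tate property, and the Beilinson-Bernstein-Deligne-Gabber theorem gives purity globally; what remains is \emph{pointwise} purity, i.e.\ purity of the stalks). The standard mechanism is the following: if a torus $\GG_m$ acts on an affine variety $X'$ with a unique fixed point, contracting $X'$ to that point, then for any mixed sheaf $\mathcal F$ on $X'$ the stalk at the fixed point computes the global cohomology, $H^*(X',\mathcal F)\cong \mathcal F_x$ (by the contraction principle / hyperbolic localization of Springer and of Braden), and hence if $\mathcal F$ is pure as a complex on $X'$ its stalk at $x$ is pure. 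So the first step is to reduce the pointwise purity statement on $X$ to a statement about stalks at a chosen point $x\in X_{F_0}=\Fl(V)\times V$, using $B$-equivariance to move any point into such a slice.

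First I would fix $x=(F_0,F_1,v)\in X_{F_0}$ lying in the orbit $\Omega_{\tw'}\cap X_{F_0}$, and apply Lemma~\ref{Gm-action} to get a cocharacter $\chi:\GG_m\to B$ fixing $x$ with all positive weights on the normal space $N_{x,X'}\O'$. Since $X'=X_{F_0}\cong\Fl(V)\times V$ is not affine, I would pass to a $\chi$-stable affine neighbourhood, or better, work directly with the attracting set: let $X'_+=\{y\in X': \lim_{t\to 0}\chi(t)y=x\}$ be the attracting cell for $x$. Because the weights on the full tangent space $T_xX'$ may be mixed in sign, $X'_+$ is a locally closed affine subvariety through $x$, and I would restrict $j_{!*}\Ql[\ell(\tw')+\bn](\tfrac{\ell(\tw')+\bn}{2})$ to $X'_+$. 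The contraction lemma then identifies the stalk at $x$ of this restriction — which is the same as the stalk on $X$, since $X'_+\hookrightarrow X\hookrightarrow X$ and stalks are local — with the cohomology $H^*(X'_+,\,j_{!*}\Ql[\dots]|_{X'_+})$, equipped with its weight filtration.

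Next I would invoke purity: by Corollary~\ref{pop} the sheaf $j_{!*}\Ql[\ell(\tw')+\bn](\tfrac{\ell(\tw')+\bn}{2})$ is Tate, and by Proposition~\ref{KL basis}b (via BBDG purity) it is pure of weight zero as a complex on $X$; a pure complex restricted to the locally closed subvariety $X'_+$ remains a complex whose hypercohomology is mixed, and the contraction-principle isomorphism above is compatible with Frobenius. The key point is that hyperbolic localization for a $\GG_m$ with positive weights on the normal directions to the orbit sends pure to pure: concretely, the costalk (sections with support) and the stalk of a pure selfdual complex at the attracting fixed point agree up to a shift, forcing the weights of the stalk cohomology to lie in the correct range; combined with the upper bound from $j_{!*}$ being a subquotient and the lower bound from Verdier selfduality, one gets that all weights of the stalk are exactly zero. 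Thus the stalk at $x$ is pure of weight zero, and since $x$ was an arbitrary closed point of $X$ (reduced to the slice by $B$-equivariance), the sheaf is pointwise pure of weight zero.

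The main obstacle I expect is making the contraction/hyperbolic-localization argument precise on the non-affine space $X'=\Fl(V)\times V$: one must check that the chosen cocharacter $\chi$ actually contracts a neighbourhood of $x$ inside the relevant stratum so that Braden's hyperbolic localization theorem applies, that the attracting set $X'_+$ meets the orbit closure $\overline\Omega}_{\tw'}$ in the expected way, and that the Tate/constant-geometric-monodromy structure is preserved under restriction. In practice this is handled by noting that the weight data in Lemma~\ref{Gm-action} was engineered precisely so that $\chi$ has nonnegative weights on $T_xX'$ modulo $T_x\O'$ and one can conjugate to arrange a genuine contraction on an affine chart; the rest is the by-now-standard purity-of-hyperbolic-localization argument (see e.g.\ the treatment in~\cite{BGS}), which I would cite rather than reprove.
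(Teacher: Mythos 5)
Your proposal uses the same opening moves as the paper: fix the slice $X_{F_0}\cong\Fl(V)\times V$, reduce to the stalk at a single point by $B$-equivariance, and feed the $\GG_m$-cocharacter of Lemma~\ref{Gm-action} into a contraction argument. But the technical mechanism you propose afterwards is genuinely different from the paper's, and it leaves a gap that would take real work to close. The paper does \emph{not} restrict the IC sheaf to the attracting cell $X'_+$ and invoke Braden. Instead it uses Sumihiro's equivariant embedding theorem~\cite{Su} to produce a $\chi(\GG_m)$-stable, locally closed, smooth slice $Y'\subset X'$ through $x$ that is \emph{transverse} to the $B$-orbit $\Omega'$ (so $T_xX'=T_x\Omega'\oplus T_xY'$) and is \emph{fully contracted} to $x$ by $\chi$. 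On such a slice the relevant sheaf is the genuine IC sheaf $a'_{!*}\Ql$ of a stratum closure inside $Y'$; it is $\chi(\GG_m)$-equivariant and pure, and Springer's purity theorem~\cite{Sp} for a contracting torus action then gives purity of its stalk at $x$ directly. The smooth morphism $b:B\times Y'\to X'$, $(g,y)\mapsto g(y)$ (smooth at $(e,x)$ precisely because of the transversality condition) transfers this purity back to the stalk of $a_{!*}\Ql$ on $X'$.

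The gap in your route: after restricting the pure complex on $X$ to the attracting cell $X'_+$, the restriction is no longer pure, so the contraction lemma $\mathcal F_x\cong H^*(X'_+,\mathcal F|_{X'_+})$ does not by itself give purity of $\mathcal F_x$. To salvage this you would need the full hyperbolic localization machinery (weight bounds for $i_+^*$ and $i_+^!$ of pure equivariant complexes, plus Verdier selfduality to squeeze the weights), which your proposal gestures at (``the costalk and the stalk \ldots agree up to a shift, forcing the weights\ldots'') but does not actually carry out; the phrase ``upper bound from $j_{!*}$ being a subquotient'' in particular is not a precise argument. This is a legitimate alternative route in the literature, but it is heavier than what the paper does, and you would have to cite or re-prove the required Braden-type estimates rather than appeal to~\cite{BGS}, which does not contain that statement. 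The paper's slice construction avoids all of this by arranging that the complex being contracted is intrinsically pure on $Y'$. One further small slip: you fix the point $x$ inside the open orbit $\Omega_{\tw'}\cap X_{F_0}$, where pointwise purity is trivial; for the proposition you must take $x$ in an arbitrary $B$-orbit $\Omega_{\tw,F_0}$ inside the closure $\overline\Omega_{\tw',F_0}$, which is what the paper does (and what Lemma~\ref{Gm-action} is formulated for).
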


\proof
We denote the locally closed embedding $\Omega_{\tw',F_0}\hookrightarrow X'$
by $a$. The statement of the proposition is equivalent to pointwise purity
of $a_{!*}\Ql$. We choose a point $x\in\Omega'=\Omega_{tw,F_0}\subset
\overline{\Omega}_{\tw',F_0}$.
Due to $B$-equivariance, it suffices to prove the purity of the
stalk of $a_{!*}\Ql$ at $x$. We claim that there is a locally closed
subvariety $Y'\subset X'$ such that a) $Y'\cap\Omega'=x$; b) $Y'$ is smooth
at $x$; c) the tangent space $T_xX'=T_x\Omega'\oplus T_xY'$; d) $Y'$ is stable
under the action of the one-parametric subgroup $\chi(\GG_m)$; moreover,
$Y'$ is contracted to $x$ by this action. The existence of $Y'$ with required
properties follows from Sumihiro equivariant embedding theorem~\cite{Su}.

We denote the locally closed embedding
$\Omega_{\tw',F_0}\cap Y'\hookrightarrow Y'$ by $a'$. The intersection
cohomology sheaf $a'_{!*}\Ql$ on $Y'$ is $\chi(\GG_m)$-equivariant, and hence
its stalk at $x$ is pure, see e.g.~\cite{Sp}.
The morphism $b:\ B\times Y'\to X',\ (g,y)\mapsto g(y)$ is smooth at the
point $(e,x)$ (where $e$ is the neutral element of $B$) due to the condition
c) above. It follows that $b^*a_{!*}\Ql|_{(e,x)}=pr_2^*a'_{!*}\Ql|_{(e,x)}$
where $pr_2:\ B\times Y'\to Y'$ is the second projection. We conclude that
the stalk $a_{!*}\Ql|_x$ is pure. The proposition is proved.
\qed

\begin{cor}
\label{pp}
We define the Kazhdan-Lusztig polynomials $P_{\tw,\tilde{y}}$ by
$\utH_\tw=\sum_{\tilde{y}\leq\tw}P_{\tw,\tilde{y}}H_{\tilde{y}}$.
Then all the coefficients of $P_{\tw,\tilde{y}}$ are nonnegative integers.
The coefficient of $\bv^{-k}$ in $P_{\tw,\tilde{y}}$ vanishes if
$k\not\equiv\ell(\tw)-\ell(\tilde{y})\pmod{2}$. \qed
\end{cor}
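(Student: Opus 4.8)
The plan is to deduce this corollary from Proposition~\ref{varshav} (pointwise purity) together with the duality/positivity formalism already set up. First I would recall the standard dictionary: for each $\tw\in RB$ the element $\utH_\tw$ is, by Proposition~\ref{KL basis}.b), the class of the selfdual IC sheaf $j_{!*}\Ql[\ell(\tw)+\bn](\tfrac{\ell(\tw)+\bn}{2})$ on $\overline\Omega_\tw\subset X$, and $H_{\tilde y}$ is the class of $j_!\Ql[\ell(\tilde y)+\bn](\tfrac{\ell(\tilde y)+\bn}{2})$ on $\Omega_{\tilde y}$. Expanding a class in the Grothendieck group in the basis $\{H_{\tilde y}\}$ amounts to taking, orbit by orbit, the class of the $*$-restriction $i_{\tilde y}^*$ in the Grothendieck group of Tate sheaves on $\Omega_{\tilde y}$, which (since $\Omega_{\tilde y}$ has trivial $G$-equivariant fundamental group and is a vector bundle in the relevant sense) is a free $\BZ[\bv,\bv^{-1}]$-module of rank one on each cohomological/weight piece. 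Concretely, the coefficient of $H_{\tilde y}$ in $\utH_\tw$ records, up to the shift $\ell(\tw)-\ell(\tilde y)$ in cohomological degree, the Poincaré polynomial in $\bv$ of the stalk cohomology $\mathcal H^\bullet(i_x^*\, j_{!*}\Ql)$ at a point $x\in\Omega_{\tilde y}$, with $\bv$ counting (half) the weight.

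Next I would invoke Proposition~\ref{varshav}: the stalk $i_x^*\, j_{!*}\Ql[\ell(\tw)+\bn](\tfrac{\ell(\tw)+\bn}{2})$ is pointwise pure of weight zero, so each cohomology sheaf $\mathcal H^{-k}$ of it is pure of weight $-k$, i.e. a direct sum of copies of $\Ql(k/2)$. Hence, after the normalizing shift, the coefficient of $H_{\tilde y}$ in $\utH_\tw$ is $\sum_k (\dim \mathcal H^{?}) \bv^{-k}$ with all $\dim\geq 0$ — a polynomial in $\bv^{-1}$ with nonnegative integer coefficients. This is precisely nonnegativity of the coefficients of $P_{\tw,\tilde y}$; and because $j_{!*}$ is a perverse sheaf, purity forces the weight of $\mathcal H^{-k}(i_x^*)$ to be $-k$ exactly, so only the monomials $\bv^{-k}$ with $k$ of the correct parity — namely $k\equiv\ell(\tw)-\ell(\tilde y)\pmod 2$ — can occur. (The parity statement can alternatively be read off from Proposition~\ref{KL basis}.a) together with the fact that the transition matrix between $\{H_\tw\}$ and $\{\utH_\tw\}$ is unitriangular with off-diagonal entries in $\bv^{-1}\BZ[\bv^{-1}]$, combined with the $\bv\mapsto -\bv$ symmetry coming from the bar-involution; but the purity argument gives both statements at once.)

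The only genuine point requiring care is the identification of the ``number of copies of $\Ql(k/2)$ in the stalk'' with the coefficient of $\bv^{-k}$ in $P_{\tw,\tilde y}$, i.e. making sure the weight/cohomological-degree bookkeeping matches the normalization $\utH_\tw = \sum_{\tilde y\leq\tw} P_{\tw,\tilde y} H_{\tilde y}$ with the shifts $[\ell+\bn](\tfrac{\ell+\bn}{2})$ built in. This is the standard argument for ordinary Kazhdan–Lusztig polynomials of flag varieties (see e.g.~\cite{Be}, section~6, or~\cite{BGS}), transported verbatim to $X$; since all the ingredients — Tate property (Corollary~\ref{pop}), the geometric meaning of $H_\tw$ and $\utH_\tw$ (Corollary~\ref{balda}, Proposition~\ref{KL basis}.b)), and pointwise purity (Proposition~\ref{varshav}) — are already available, the proof is a short citation of that argument rather than anything new, which is why I expect to dispatch it in a sentence or two. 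The main obstacle, already overcome upstream, was establishing pointwise purity via the $\GG_m$-contraction in Lemma~\ref{Gm-action} and Sumihiro's theorem; with that in hand the corollary is immediate.
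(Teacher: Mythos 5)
Your proposal is correct and follows exactly the route the paper intends: the paper gives Corollary~\ref{pp} with no written proof (just a \qed), treating it as an immediate consequence of Proposition~\ref{varshav} (pointwise purity), Corollary~\ref{pop} (Tate property), Proposition~\ref{KL basis}.b) (identification of $\utH_\tw$ with the IC class), and the standard stalk-cohomology/weight dictionary from~\cite{Be},~\cite{BGS}. You have simply made explicit the bookkeeping that the paper leaves tacit; there is no difference in approach.
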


\subsection{The structure of the $\CH$-module $\CR$}
It is known that the algebra $\alg{\H}$ is isomorphic to the
group algebra of symmetric group $\QQ(\bv)[\gS_N]$.
Hence, the isomorphism classes
of irreducible modules over $\alg{\H}$
are indexed by the set of partitions of $N$. We denote by $V_{\nu}$ the
irreducible module corresponding to a partition $\nu$.

\begin{prop}
\label{algH}
$\alg{\H}$-bimodule $\alg{\R}$ has the following decomposition into
irreducible bimodules:
\eql{f3}{\alg{\R}=\bo_{(\tilde\nu,\tilde\theta,\tilde\nu')\in\bT}
V_{\nu}^*\otimes_{\QQ(\bv)} V_{\nu'} }
where the sum is taken over all the triples of partitions
$\nu,\theta, \nu'$ such that $|\nu|=|\nu'|=N$ and for any $i\ge 1$ we have
$\nu_i\ge\theta_i\ge \nu_i-1$; $\nu'_i\ge\theta_i\ge \nu'_i-1$.
\end{prop}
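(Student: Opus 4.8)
The plan is to compute the $\alg{\H}$-bimodule structure of $\alg{\R}$ by combining the explicit formulas from Proposition~\ref{thm_explicit} with the combinatorial bijection from Proposition~\ref{N} (equivalently Theorem~\ref{fthm}). First I would recall that since $\alg{\H}\cong\QQ(\bv)[\gS_N]$ is semisimple, the bimodule $\alg{\R}$ is automatically semisimple, so it decomposes as $\bigoplus_{\nu,\nu'} m_{\nu,\nu'}\, V_\nu^*\otimes_{\QQ(\bv)} V_{\nu'}$ for some multiplicities $m_{\nu,\nu'}\ge 0$. The whole task is therefore to pin down the multiplicities $m_{\nu,\nu'}$ and show they equal the number of partitions $\theta$ with $(\nu,\theta)\in\fP$ and $(\nu',\theta)\in\fP$, i.e. the number of $\bt=(\nu,\theta,\nu')\in\bT$ with these fixed $\nu,\nu'$.

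The cleanest route I would take is via characters: compute the bicharacter of $\alg{\R}$ (the trace of the operator $h\otimes h'$ acting by $r\mapsto h' r h$, say) and match it against $\sum_{\bt\in\bT}\chi_\nu\otimes\chi_{\nu'}$. But a more structural argument is available using the mirabolic RSK correspondence. Namely, I would set up a filtration of $\alg{\R}$ by $\alg{\H}$-subbimodules indexed by the two-sided microlocal cells (the sets of $\tw$ with a fixed $\bt(\tw)$); the union of cells with $\bt$-value in a closed subset should be a subbimodule because right/left multiplication by $\utH_s$ as in~\eqref{eq_explicit} preserves the relevant closure order (this is where one reads off that $\ot u\subset\overline{\ot w}$ forces compatible microlocal cells — alternatively one invokes that the associated graded of the standard-basis filtration by Bruhat order is governed by~\eqref{eq_explicit}). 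The subquotient attached to a single cell $\bt=(\nu,\theta,\nu')$ has $\QQ(\bv)$-basis indexed by pairs $(T_1,T_2)$ with $T_1\in\St(\nu)$, $T_2\in\St(\nu')$, by Proposition~\ref{N}; I would then argue this subquotient is isomorphic to $V_\nu^*\otimes V_{\nu'}$ as an $\alg{\H}$-bimodule. For that, one checks the left $\alg{\H}$-action on the $T_1$-index behaves exactly as in the classical Kazhdan-Lusztig cell story for $\gS_N$ (giving $V_\nu$, hence $V_\nu^*$ after accounting for the left/right conventions), and likewise the right action on the $T_2$-index gives $V_{\nu'}$; since $\dim_{\QQ(\bv)} V_\nu\cdot\dim_{\QQ(\bv)} V_{\nu'}$ equals $|\St(\nu)|\cdot|\St(\nu')|$, the dimension count forces the subquotient to be precisely $V_\nu^*\otimes V_{\nu'}$ and in particular irreducible. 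Summing over all cells $\bt\in\bT$ and using semisimplicity to split the filtration gives~\eqref{f3}.

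The main obstacle I expect is the middle step: proving that the cell subquotient really carries the bimodule structure $V_\nu^*\otimes V_{\nu'}$ rather than just having the right dimension. The explicit formulas~\eqref{eq_explicit} mix $\tw s$, $\tw'$, $(\tw s)'$ in a way that does not obviously restrict to a tensor-product action, and the $\theta$-coordinate couples the two tableaux through the constraint $\nu\supset\theta\subset\nu'$; one must verify that passing to the associated graded for the microlocal-cell filtration kills exactly the cross-terms, leaving the action diagonal in the $\bt$-grading and separately governed by $T_1$ and $T_2$. I would handle this by restricting attention to the first two cases of~\eqref{eq_explicit} (where $ws>w$) to get an upper bound on how the action can move $\bt$, dualize (using the anti-automorphism $\tw\mapsto\tw^{-1}$) to control the left action, and then invoke Spaltenstein's description (cited in the proof of Theorem~\ref{fthm}) identifying the classical RSK output with the geometry of $\Fl_u$, so that on each cell the left and right $\alg{\H}$-modules are the Springer-module realizations of $V_\nu$ and $V_{\nu'}$. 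Once both one-sided actions are identified, semisimplicity of $\alg{\H}$ forces the bimodule to be the external tensor product, completing the proof.
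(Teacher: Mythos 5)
Your plan is a genuinely different route from the paper, and unfortunately it has two serious problems.

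First, there is a circularity. You invoke Proposition~\ref{N} (and Theorem~\ref{fthm}) to index the basis of each putative cell subquotient by pairs $(T_1,T_2)\in\St(\nu)\times\St(\nu')$. But the paper's proof of Proposition~\ref{N} explicitly \emph{uses} formula~\eqref{f3} --- i.e.\ the very statement you are trying to prove --- to count irreducible components of $Y$ (the text says so in so many words: ``We will use the formula~\eqref{f3} \dots whose proof does not use the proposition we are proving''). The author avoids circularity by proving Proposition~\ref{algH} independently; you would instead need to appeal to Remark~\ref{alt}, which sketches an alternative, dimension-counting proof of Proposition~\ref{N} not relying on~\eqref{f3}. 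As written, your argument loops.

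Second, even granting Proposition~\ref{N}, the central step of your plan --- that the span of microlocal cells forms a bimodule filtration whose subquotient on each cell $\bt=(\nu,\theta,\nu')$ is $V_\nu^*\otimes V_{\nu'}$ --- is not something that falls out of the explicit formulas~\eqref{eq_explicit}. The needed compatibility between the $\CH$-action in the Kazhdan--Lusztig basis and the $\bt$-stratification is essentially Theorem~\ref{L}.b), whose proof in the paper is not combinatorial at all: it passes through Hodge $D$-modules and singular supports. You would be proving a large chunk of the later ``Bimodule KL cells'' section just to get started, and you would still face identifying the left and right module structures on the subquotient (dimension count alone, as you partly acknowledge, does not do this; a priori the subquotient could split as a sum of several $V_\mu^*\otimes V_{\mu'}$ with matching total dimension).

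The paper's actual proof is much shorter and avoids all of this. It observes that both sides of~\eqref{f3} are defined over a suitable finitely generated subring $\mathcal A\subset\QQ(\bv)$, so it suffices to verify the isomorphism after one specialization $\bv\mapsto\sqrt q$. Under this specialization, the $G$-orbit decomposition $V=\{0\}\sqcup(V\setminus\{0\})$ identifies $\R$ with $\H\oplus\End_{P(\sk)}(E)$, where $E$ is the space of functions on $\Fl(\sk)$. One then feeds in the classical decomposition $E=\bigoplus_\nu U_\nu\otimes V_\nu$ as a $G(\sk)\times\H$-bimodule and Zelevinsky's branching rule (Theorem~13.5.a) for restricting $U_\nu$ to the mirabolic $P(\sk)$, and computes $\End_{P(\sk)}(E)$ directly. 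The summand $\H=\bigoplus_\nu V_\nu^*\otimes V_\nu$ supplies exactly the missing diagonal terms $\nu=\theta=\nu'$. This is a representation-theoretic argument over the finite field, completely independent of the microlocal/RSK machinery, which is precisely what makes it usable as an input to the earlier geometric section rather than a consequence of it.
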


\begin{proof}
Choose a finitely generated $\BZ[\bv,\bv^{-1}]$-algebra
$\mathcal A\subset\QQ(\bv)$
such that $\H\otimes_{\BZ[\bv,\bv^{-1}]}\mathcal A$ is isomorphic to a
direct sum of matrix algebras over~$\mathcal A$, so that $V_\nu$ is defined
over~$\mathcal A$. Then it suffices to prove that this isomorphism holds after
the specialization
${\cdot}\otimes_{\mathcal A}\CC$ which takes
$\bv\mapsto\sqrt{q}$ where $q$ is a prime power such that
$\mathcal A\not\ni (\bv^2-q)^{-1}$.
In this case the left hand side of formula~\eqref{f3} can be interpreted as
$$\End_{P(\sk)}(E)\bo \H\otimes_{\BZ[\bv,\bv^{-1}]}\CC$$
where $P(\sk)\subset\GL_N$ is the stabilizer of $v\ne0$,
and $E$ is the vector space introduced in subsection~\ref{4.1}.
According to~\cite{Z}, Theorem~13.5.a), the irreducible components
of $P(\sk)$-module
$E$  are indexed by partitions $\theta$  with $|\theta|\leq N$.
We denote by $W_{\theta}$ the irreducible representation of $P(\sk)$
indexed by $\theta$. Denote by $U_{\nu}$ the irreducible
unipotent representation of
$G(\sk)$ indexed by $\nu$. Then the restriction of $U_{\nu}$ to $P(\sk)$
is a direct sum of the representations $W_{\theta}$ (with multiplicity one)
for all $\theta$ such that $\nu_i\ge\theta_i\ge \nu_i-1$ for any $i$ and $\theta\ne\nu$.

As a representation of the group $G(\sk),\ E$ admits a
decomposition as follows:
$$ E=\bo_{\nu} U_\nu\otimes V_\nu$$
(Here $G(\sk)$ acts on $V_{\nu}$ trivially). Therefore, as a
representation of $P(\sk)$, $E$ can be written in the form
$$ E=\bo_{\substack{\nu_i\ge\theta_i\ge \nu_i-1\\ \nu\ne \theta }}
W_\theta\otimes V_\nu $$
It follows that
$$\End_{P(k)}(E)=\mathop{{\bo}'}\limits_{(\tilde\nu,\tilde\theta,\tilde\nu')\in\bT}
V^*_\nu\otimes V'_\nu $$
where the sum $\bo'$ is the same as in formula~\eqref{f3},
but the case $\nu=\theta=\nu'$ is excluded. Besides, we have
$\H\otimes_{\BZ[\bv,\bv^{-1}]}\CC = \bo_\nu V^*_\nu\otimes V_\nu$.
Adding these equalities, we obtain the required result.

\end{proof}

\section{Bimodule KL cells}

\subsection{}
Consider all possible subbimodules of bimodule $\R$
spanned by subsets of basis ${\utH_{\tw}}$.
We say that two coloured permutation $\tw$ and  $\tw '$ belong to the same
Kazhdan-Lusztig bimodule cell if for each  such subbimodule
$\M$ we have  ${\utH_{\tw}}\in\M \Longleftrightarrow
{\utH_{\tw}'}\in\M.$ If instead of subbimodules we consider
left or right submodules then we obtain the definition of
left and right Kazhdan-Lusztig cells.

For $N=3$, a big part (for $\beta$ nonempty) of $RB$ is depicted
in~\cite{M}~1.3 with the help of Latin alphabet. It is a union of 13
two-sided KL cells: $\{\operatorname{max}\},\ \{z,u\},\ \{y,o,p,h\},\
\{x,t\},\ \{v,w,m,n\},\ \{s,i,k,b\},\ \{r,g\},\ \{q,f\},\ \{l,c\},\\
\{j,d\},\ \{e\},\ \{a\},\ \{\operatorname{min}\}$.
We take this opportunity to add two order relations missing in {\em loc. cit.}:
$c\lessdot l,\ r\lessdot v$.

\begin{conj}
\label{micro vs KL}
The bimodule KL cells coincide with the two-sided microlocal cells.
\end{conj}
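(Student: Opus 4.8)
We sketch a possible route to Conjecture~\ref{micro vs KL}. By Proposition~\ref{N} and Theorem~\ref{fthm} the two-sided microlocal cells are exactly the fibres of the map $\tw\mapsto\bt(\tw)\in\bT$, and $\RSK$ shows that the fibre over $\bt=(\nu,\theta,\nu')$ has $|\St(\nu)|\cdot|\St(\nu')|$ elements. Order $\bT$ by $\bt\leq\bt'$ iff $\overline{Z^{\bt}}\subseteq\overline{Z^{\bt'}}$ (the closure order on the mirabolic orbit-closure analogues of~\ref{RSK2}, which are irreducible, so this is a partial order), and for $\bt_0\in\bT$ put $\CR^{\leq\bt_0}:=\operatorname{span}_{\BZ[\bv,\bv^{-1}]}\{\utH_\tw\mid\bt(\tw)\leq\bt_0\}\subseteq\CR$. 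The conjecture reduces to two assertions: \emph{(I)} each $\CR^{\leq\bt_0}$ is an $\CH$-subbimodule of $\CR$; and \emph{(II)} each subquotient $\CR^{\bt_0}:=\CR^{\leq\bt_0}/\sum_{\bt_1<\bt_0}\CR^{\leq\bt_1}$ becomes irreducible after $\otimes_{\BZ[\bv,\bv^{-1}]}\QQ(\bv)$. Granting both: by (I) any two elements in the same bimodule KL cell lie in the same subbimodules $\CR^{\leq\bt_0}$, so (applying this with $\bt_0=\bt(\tw)$ and with $\bt_0=\bt(\tw')$ and using antisymmetry) they have the same $\bt$; hence the KL cell partition refines the microlocal one. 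Conversely, since by (II) an irreducible bimodule has no proper nonzero subbimodule spanned by a subset of the KL basis, a microlocal cell cannot break into several KL cells; so the two partitions coincide.

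Assertion (II) follows formally from (I). Extending $\leq$ to a linear order turns $\{\CR^{\leq\bt_0}\}$ into a filtration of $\CR$ by $\CH$-subbimodules with successive quotients $\CR^{\bt_0}$, each nonzero since $\RSK$ is surjective. Over $\QQ(\bv)$ the algebra $\alg{\CH}\cong\QQ(\bv)[\gS_N]$ is semisimple, so the filtration splits and $\alg{\CR}\cong\bigoplus_{\bt_0\in\bT}\alg{\CR^{\bt_0}}$. Comparing with Proposition~\ref{algH}, whose decomposition has exactly $|\bT|$ irreducible summands, and counting simple constituents on the two sides, one sees that each of the $|\bT|$ nonzero summands $\alg{\CR^{\bt_0}}$ is irreducible, which is (II). (A dimension count using $\RSK$ together with the left and right $\CH$-module structures recorded by $T_1$ and $T_2$ moreover identifies $\alg{\CR^{\bt_0}}$ with the summand $V_\nu^*\otimes V_{\nu'}$ of Proposition~\ref{algH}, though this refinement is not needed for the statement.)

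Assertion (I) is the crux, and is the mirabolic analogue of Spaltenstein's theorem~\cite{S} that convolution with $\utH_s$ does not raise the two-sided cell. By Proposition~\ref{KL basis}, $\utH_\tw$ is the class of the $G$-equivariant IC-sheaf $\mathrm{IC}_\tw$ with support $\overline\Omega_\tw\subset X$, and $\utH_\tw\utH_s$ is the class of the proper convolution pushforward of $\mathrm{IC}_\tw$ along the correspondence, which by the Decomposition Theorem (the Demazure-type resolutions of Proposition~\ref{ABC} supplying the requisite purity and Tateness) is a sum of shifted twists of sheaves $\mathrm{IC}_{\tw''}$. A singular-support estimate then bounds $\mathrm{SS}(\mathrm{IC}_{\tw''})$ by the composition, over the shared flag variety, of $\mathrm{SS}(\mathrm{IC}_\tw)$ (the union of $\overline{N^*\Omega_\tw}$ with the conormals of smaller orbits) with $\mathrm{SS}(\mathrm{IC}_s)\subset T^*(\Fl(V)\times\Fl(V))$; pushing forward by $\pi$ to $Z$, (I) reduces to the purely linear-algebraic claim that, in the closure order on $\bT$,
\[
\pi\bigl(\overline{N^*\Omega_\tw}\circ\overline{N^*\Omega_s}\bigr)\ \subseteq\ \bigcup_{\bt'\leq\bt(\tw)}\overline{Z^{\bt'}},
\]
with $\overline{Z^{\bt(\tw)}}$ itself occurring (the left action being handled symmetrically via $\tw\mapsto\tw^{-1}$). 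Establishing this estimate --- understanding how the mirabolic orbit-closures $\overline{Z^\bt}$ move under the elementary correspondence attached to a simple reflection, where now the vector $v$ interacts with the nilpotents $u_1,u_2$ --- is precisely the new input required beyond the classical ($v=0$) case of~\cite{S}, and is the reason the statement remains conjectural.

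As an alternative to the geometric argument one may attempt to prove (I), or directly that $\bt(\cdot)$ is constant on bimodule KL cells, by induction on $\ell(\tw)$ from the explicit action formulas of Proposition~\ref{thm_explicit}, showing that right convolution with $\utH_s$ does not raise $\bt$. Via the combinatorial description of $\RSK$ in~\ref{RSK2} this amounts to controlling the effect of the moves $\tw\mapsto\tw s$, $\tw\mapsto\tw'$, $\tw\mapsto(\tw s)'$ on the triple $(\nu,\theta,\nu')$, i.e.\ to a ``mirabolic Knuth equivalence'' refining the classical one; this is elementary in principle, but the bookkeeping is delicate, which is presumably the obstacle on this side as well.
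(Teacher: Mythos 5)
This statement is a conjecture, not a theorem of the paper; immediately after stating it the paper says that only one inclusion is proved, in Theorem~\ref{L}. Your write-up is appropriately framed as a sketch, and the core reduction is logically sound: assertion (I), that each $\CR^{\leq\bt_0}$ is an $\CH$-subbimodule for the closure order on $\bT$ given by the irreducible varieties $\overline{Z^{\bt}}$ of~\ref{XYZ} and~\ref{mir} (your reference to~\ref{RSK2} is a slip), does imply (II) via a linear refinement of the order, semisimplicity of $\alg{\CH}$, and the count of $|\bT|$ irreducible summands in Proposition~\ref{algH}; and (I)$+$(II) together give both inclusions between the two cell partitions. So what you have is a correct conditional reduction, and you correctly locate the open point in the missing singular-support estimate on $Z$ where the vector $v$ enters.

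Where you part ways with the paper: the inclusion ``microlocal cell $\subseteq$ KL cell'' you obtain only conditionally via (I)$\Rightarrow$(II), whereas Theorem~\ref{L}.a proves it unconditionally by a different, combinatorial argument --- Lemma~\ref{2} and Proposition~\ref{2el} connect any two elements of a one-sided microlocal cell by the elementary moves $K_i$, and Proposition~\ref{HKL} together with $\mu(\tw,\tw')=1$ for codimension-one adjacencies shows these moves never leave a Kazhdan--Lusztig cell. For the converse inclusion the paper's Theorem~\ref{L}.b uses the singular-support functor on Hodge $D$-modules to show $\nu$ and $\nu'$ are constant on bimodule KL cells; that argument sees only the Steinberg-type correspondence in $u_1,u_2$ and so says nothing about $\theta$, which depends on $v$. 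Your assertion (I) is precisely what would supply the missing $\theta$-constancy (and subsumes the rest), so your diagnosis of the obstacle --- controlling $\pi\bigl(\overline{N^*\Omega_\tw}\circ\overline{N^*\Omega_s}\bigr)$ inside $\bigcup_{\bt'\leq\bt(\tw)}\overline{Z^{\bt'}}$, or equivalently a mirabolic Knuth equivalence --- matches why the statement remains conjectural in the paper. The one thing your sketch gives up relative to the paper is that it re-derives the already-known direction only under the unproven hypothesis (I) rather than reproducing the unconditional proof of Theorem~\ref{L}.a.
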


We are only able to prove an inclusion in one direction, see Theorem~\ref{L}
below. First we have to formulate and prove a few lemmas.

\subsection{}
\label{defn Phi_i}
Consider the projection $\pi'_i\colon Y\to \Fl^{(i)}(V)\times\Fl(V)\times
V\times\N\times\N\times V^*$ where $\Fl^{(i)}(V)$ is the variety of flags consisting of subspaces of
$V$ which have dimensions $0,\dots,i-1,i+2,\dots ,N$. This projection sends a point
$(F_1,F_2,v,u_1,u_2,v^*)$ to $(\tilde{F_1},F_2,v,u_1,u_2,v^*)$ where
$\tilde{F_1}$ is obtained from $F_1$ by deleting the subspaces of dimensions
$i,i+1$. Let $Y_i=\pi'_i(Y)$.

Besides,  for each $i\in\{1,\dots,N-1\}$ consider the set $\Phi_i\subset RB $
defined as follows:
$(w,\b)\in \Phi_i$ iff for a general $(F_1,F_2,v,u_1,u_2,v^*)\in Y_{w,\b}$ we have $u_1|_{F_{1,i+1}/F_{1,i-1}}=0$. Denote by $s_i=(i,i+1)$ the elementary transposition.

\begin{lem}
\label{2}
a) Let $(w,\b)\in RB,i\in\{1,\dots,N-1\}$. Then $(w,\b)\in\Phi_i$ iff
$w(i)>w(i+1)$ and $\b\cap\{i,i+1\}\ne\{i\}$.

b)  Let $(w,\beta)$ and $(w',\beta')$ be distinct colored permutations.
Then $\pi'_i(Y_{w,\beta})=\pi'_i(Y_{w',\beta'})$ iff $w(j)=w'(j)$ when
$j\in\{1,\dots,N\}\setminus\{i,i+1,i+2\},$\\
${ \beta\btu \beta' := (\b\setminus\b')\cup(\b'\setminus\b)
\subset\{i,i+1,i+2\}}$,
and one of the following conditions is satisfied up to
interchanging $(w,\beta)$ and $(w',\beta')$:

\begin{enumerate}
\item
 $w(i)<w(i+2)<w(i+1), \\
\beta\cap\{i,i+1,i+2\}\in \{\varnothing,\{i\},\{i,i+2\},\{i,i+1,i+2\}\},\\
w'=ws_i,\ \beta'=s_i(\beta);$

\item
$ w(i+2)<w(i)<w(i+1),\\ \beta\cap\{i,i+1,i+2\}\in
\{\varnothing,\{i+2\},\{i,i+2\},\{i,i+1,i+2\}\},\\
w'=ws_{i+1},\ \beta'=s_{i+1}(\beta);$

\item
$w(i+2)<w(i+1)<w(i), \
\beta\cap\{i,i+1,i+2\}=\{i,i+2\},\\
w'=ws_{i+1},\ \beta'=s_{i+1}(\beta);$

\item
$w(i+2)<w(i)<w(i+1),\
 \beta\cap\{i,i+1,i+2\}=\{i\},\\
w'=ws_i,\ \beta'=s_i(\beta);$

\item
$w(i+2)<w(i+1)<w(i),\
 \beta\cap\{i,i+1,i+2\}=\{i\},\\
w'=w,\ \beta'=\beta\cup\{i+1\}.$

\end{enumerate}
\end{lem}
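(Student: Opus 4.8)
The plan is to reduce both statements to an explicit linear-algebra description of the conormal fibres $N^*\Omega_{w,\beta}$. Fix $(F_1,F_2,v)\in\Omega_{w,\beta}$ and an adapted basis $\{e_k\}$, so that $F_{1,j}=\langle e_1,\dots,e_j\rangle$, $F_{2,l}=\langle e_{w^{-1}(1)},\dots,e_{w^{-1}(l)}\rangle$ and $v=\sum_{k\in\beta}e_k$, as in~\eqref{F1},~\eqref{F2},~\eqref{v}. Since the conormal space to an orbit at a point is the annihilator of the tangent space, and the latter is the image of the infinitesimal $\GL(V)$-action, the fibre of $Y_{w,\beta}$ over $(F_1,F_2,v)$ is the linear space of triples $(u_1,u_2,v^*)$ with $u_1$ preserving $F_1$, $u_2$ preserving $F_2$, $v^*\in V^*$ and $u_1+u_2+v\otimes v^*=0$. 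In the basis $\{e_k\}$ the operator $u_1$ is strictly upper triangular, $u_2=\sum_{w(k)<w(l)}a_{kl}E_{k,l}$ with free parameters $a_{kl}$, and $v^*=\sum_l c_le_l^*$. Solving $u_1=-u_2-v\otimes v^*$ with $u_1$ strictly upper triangular, and using the defining inequalities of $RB$ to treat the below-diagonal equations, one finds that $c_l=0$ for every $l\in\beta$, that the $a_{kl}$ with $k>l$ are then forced, and that the remaining $a_{kl}$ (those with $k<l$) and the $c_l$ with $l\notin\beta$ stay free. Now $u_1|_{F_{1,i+1}/F_{1,i-1}}$ is the nilpotent $2\times2$ matrix whose only possibly nonzero entry is $(u_1)_{i,i+1}=-a_{i,i+1}$, with an extra summand $-c_{i+1}$ when $i\in\beta$; a general point of $Y_{w,\beta}$ therefore has $(u_1)_{i,i+1}\ne0$ unless the free parameter $a_{i,i+1}$ is absent — i.e.\ $w(i)>w(i+1)$ — and the summand $c_{i+1}$ is absent or forced to zero — i.e.\ $i\notin\beta$ or $i+1\in\beta$. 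This is exactly assertion~(a).

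For part~(b) I first reduce to the window $\{i,i+1,i+2\}$. If $\pi'_i(Y_{w,\beta})=\pi'_i(Y_{w',\beta'})$, then composing with the further projection to $Z$ and invoking Proposition~\ref{N} gives $\bt(w,\beta)=\bt(w',\beta')$; composing instead with the projection $p$ to $\Fl^{(i)}(V)\times\Fl(V)\times V$, and using that $\pi'_i$ modifies $F_1$ only in dimensions $i$ and $i+1$ so that $p(\pi'_i(Y_{w,\beta}))$ is the closure of the single $\GL(V)$-orbit of the truncated triple $(\tilde F_1,F_2,v)$, shows that $(\tilde F_1,F_2,v)$ is determined up to $\GL(V)$. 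By the orbit classification of Lemma~\ref{1} (with the truncated flag in place of $F_1$) this forces $w=w'$ away from $\{i,i+1,i+2\}$, $\{w(i),w(i+1),w(i+2)\}=\{w'(i),w'(i+1),w'(i+2)\}$, and $\beta\setminus\{i,i+1,i+2\}=\beta'\setminus\{i,i+1,i+2\}$, so in particular $\beta\btu\beta'\subseteq\{i,i+1,i+2\}$ and only finitely many pairs $(w,\beta),(w',\beta')\in RB$ survive. For these the governing invariant is the generic Jordan type of $u_1|_W$, where $W:=\tilde F_{1,i+2}/\tilde F_{1,i-1}$ is $3$-dimensional; by the computation of part~(a) this type depends only on the window configuration. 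The fibre of $\pi'_i$ over a general point of $\pi'_i(Y_{w,\beta})$ is the Springer fibre $\Fl_{u_1|_W}(W)$ — a point for type $(3)$, the full flag variety $\Fl(W)$ for type $(1,1,1)$, and two projective lines crossing at a point for type $(2,1)$ — stratified by the relative position of the resulting point of $X$ with $(F_2,v)$. A dimension count (using that every component of $Y$ has dimension $\dim X$) shows that the $(w',\beta')$ with $\pi'_i(Y_{w',\beta'})=\pi'_i(Y_{w,\beta})$ are exactly those occurring as the generic relative position of a top-dimensional stratum of $\Fl_{u_1|_W}(W)$; hence there is at most one partner $(w',\beta')\ne(w,\beta)$, and only in the type-$(2,1)$ case, occurring precisely when the two projective lines carry two distinct generic relative positions.

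It then remains to run through the type-$(2,1)$ window configurations and decide, for each, whether the two components of $\Fl_{u_1|_W}(W)$ carry the same generic relative position — giving no partner — or two distinct ones — giving the partner $(w',\beta')$ named in the statement. I expect this finite check to be the main obstacle: it is what produces the asymmetry between cases~1 and~2, each of which admits four colourings $\beta\cap\{i,i+1,i+2\}$, and cases~3--5, each admitting exactly one — the issue being to pin down exactly when the vector $v=\sum_{k\in\beta}e_k$ and the admissible functionals $v^*\in\langle e_k:k\in\beta\rangle^{\perp}$ break the symmetry between the two components of the Springer fibre. The computations are elementary but intricate; for $N=3$ and $i=1$ one has $W=V$ and the statement specialises to the adjacency list drawn in~\cite{M}~1.3, which serves as a consistency check.
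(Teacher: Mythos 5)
Your part~(a) is essentially the paper's argument: fix an adapted basis, identify the conormal fibre with the space of triples $(u_1,u_2,v^*)$ satisfying $u_1+u_2+v\otimes v^*=0$ with $u_k$ preserving $F_k$, write this out as a linear system, and read off the condition for $(u_1)_{i,i+1}$ to vanish generically. The paper records the full system of coordinate equations; you describe the same thing in prose. No issue there.

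For part~(b), you correctly identify the governing geometric picture -- the fibre of $\pi'_i$ is the Springer fibre of $u_1|_W$ on the $3$-dimensional quotient $W=\tilde F_{1,i+2}/\tilde F_{1,i-1}$, and a partner $(w',\b')$ can exist only when the generic type is $(2,1)$, in which case the two components $l_1,l_2$ of the fibre give the (at most) two irreducible components of $Y$ lying over $W$. This is also how the paper argues, and your dimension count for uniqueness is sound. But you then stop before the actual content of the lemma: determining, for each of the finitely many window configurations giving type $(2,1)$, whether the two lines $l_1,l_2$ belong to the same or to two different components of $Y$, and in the latter case identifying $(w',\b')$. You say explicitly that this ``finite check'' is ``the main obstacle'' and that ``the computations are elementary but intricate,'' but you do not carry them out. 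That check is precisely what produces the list of five conditions and the distinction between cases~1--2 (four colourings each) and cases~3--5 (one colouring each), so the proposal as written proves the framework but not the statement. The paper resolves this by an explicit construction in case~1 (exhibiting $g'=ghs_i\in\GL(V)$ with $(g')^{-1}\cdot y_1\in N^*\Omega_{w',\b'}$, hence $l_2(\tilde y)\subset Y_{w',\b'}$) and then asserting the remaining cases are similar; some version of that explicit verification is unavoidable and is missing from your proposal. A smaller imprecision: you say ``by the computation of part~(a) this type depends only on the window configuration,'' but part~(a) and its $i\mapsto i+1$ analogue control only the generic vanishing of $(u_1)_{i,i+1}$ and $(u_1)_{i+1,i+2}$; the Jordan type of $u_1|_W$ in principle also needs $(u_1)_{i,i+2}$, and the paper sidesteps this by working from the start with $(w,\b)\in\Phi_i\btu\Phi_{i+1}$, where the type is automatically $(2,1)$.
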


\begin{proof}
a) Let $\{e_i\}$ be a basis of $V$ and $x=(F_1,F_2,v)\in\O_{w,\b}$ be
the element given by~\eqref{F1},~\eqref{F2},~\eqref{v}. We must find the
conormal space $N^*_x \O_{w,\b}\subset T^*_x X$. It is isomorphic to the
space of triples $(u_1,u_2,v^*)$ such that $u_k$ is a nilpotent preserving
$F_k$ and $u_1+u_2+v\otimes v^*=0$. Let
\eql{ukv*}{u_k=\sum_{i,j=1}^N (u_k)_{i,j}E_{i,j},\quad v^*=\sum_{i=1}^N c_i
e_i^*} where $E_{i,j}$ is defined by~\eqref{Eij} and $e_i^*$ is the basis
dual to $e_i$. Then the last relation is equivalent to the fact that the
following conditions are satisfied:
$$\begin{cases}
c_i=0&\text{for $i\in\b$;}\\
(u_1)_{i,j}=-(u_2)_{i,j}&\text{for $i,j\in\b$ or $i,j\not\in\b$;}\\
(u_k)_{i,j}=0 &\text{for $i\not\in\b,\ j\in\b$;}\\
(u_1)_{i,j}+(u_2)_{i,j}=-c_j &\text{for $i\in\b,\ j\not\in\b,\ i<j,\
w(i)<w(j)$;}\\
(u_1)_{i,j}=-c_j,\ (u_2)_{i,j}=0 &\text{for $ i\in\b,\ j\not\in\b,\
w(i)>w(j)$;}\\
(u_1)_{i,j}=0,\ (u_2)_{i,j}=-c_j &\text{for $ i\in\b,\ j\not\in\b,\ i>j$}.
\end{cases}$$
If we substitute $j=i+1$, we obtain the statement a) of the lemma.

\bigskip
b) Note that the fiber $(\pi'_i)^{-1}(\tilde y)$ over an arbitrary point $\tilde y=(\tilde F_1,F_2,v,\linebreak u_1,u_2,v^*)\in Y_i$ is isomorphic to the variety of full flags in the 3-dimensional space $F_{1,i+2}/F_{1,i-1}$ fixed by $u_{1,i}= u_1|_{ \tilde F_{1,i+2}/\tilde F_{1,i-1}}$. The structure of this variety depends on the type of $u_{1,i}$. There are three possibilities for this type: $(3)$, $(2,1)$ and $(1,1,1)$. Denote $W=\pi'_i(Y_{w,\beta})$, $W'=\pi'_i(Y_{w',\beta'})$. Since $\pi'_i$ is proper, $W$ and $W'$ are closed. Suppose $W=W'$. This means that for a general point $\tilde y\in W$ the fiber $(\pi'_i)^{-1}(\tilde y)$ is reducible, so the type of $u_{1,i}$ equals $(2,1)$. Such fiber has a form of a union of two intersecting projective lines:
\begin{gather*}
(\pi'_i)^{-1}(\tilde y)=l_1(\tilde y)\cup l_2(\tilde y);\\
l_1(\tilde y)=\{(F_1,F_2,v,u_1,u_2,v^*)\in(\pi'_i)^{-1}(\tilde y)\mid F_{1,i}/F_{1,i-1}=\im u_{1,i}\};\\
l_2(\tilde y)=\{(F_1,F_2,v,u_1,u_2,v^*)\in(\pi'_i)^{-1}(\tilde y)\mid F_{1,i+1}/F_{1,i-1}=\ker u_{1,i}\}.
\end{gather*}
Let $U\subset W$ be the set of all $\tilde y\in W$ such that the type of $u_{1,i}$ equals $(2,1)$. It is an open dense subset in $W$. Consider the sets $U_k=\bigcup_{\tilde y\in U}\,l_k(\tilde y)$. The set $U_1\cup U_2= (\pi'_i)^{-1}(U)$ is an open subset of $(\pi'_i)^{-1}(W)$. Since $U_1\cup U_2\subset Y_{w,\b}\cup Y_{w',\b'}$ and $U_k$ are irreducible, we must have either $Y_{w,\b}=\overline{U_1}$, $Y_{w',\b'}=\overline{U_2}$ or
$Y_{w,\b}=\overline{U_2}$, $Y_{w',\b'}=\overline{U_1}$.
Without loss of generality, we can assume that we have the first case. Then it is easy to see that $(w,\b)\in\Phi_{i+1}\setminus\Phi_i$, $(w',\b')\in\Phi_i\setminus\Phi_{i+1}$.

 Conversely, if $(w,\b)\in\Phi_i\setminus\Phi_{i+1}$ (resp.\  $(w,\b)\in\Phi_{i+1}\setminus\Phi_i$) then the type of $u_{1,i}$ for a general $\tilde y\in W$ is $(2,1)$. Therefore, we have $Y_{w,\b}=\overline{U_1}$ (resp.\  $Y_{w,\b}=\overline{U_2}$). So, there exists exactly one $(w',\b')\in\Phi_{i+1}\setminus\Phi_i$ (resp.\  $(w,\b)\in\Phi_i\setminus\Phi_{i+1}$) such that $\pi'_i(Y_{w,\beta})=\pi'_i(Y_{w',\beta'})$.

It is clear that the condition $(w,\b)\in\Phi_{i+1}\setminus\Phi_i$ is equivalent to the fact that the first two parts of one of the conditions of the lemma are satisfied. So, we must prove that if $(w',\b')$ is given by the last two equations of this condition then we have $W=W'$.
Choose a general $y\in Y_{w,\b}$ and let $\tilde y=\pi'_i(y)$. Since $(w,\b)\in\Phi_{i+1}\setminus\Phi_i$, we have $y\in l_1(\tilde y)$. We must prove that $l_2(\tilde y)\subset Y_{w',\b'}$. Let $y=(F_1,F_2,v,u_1,u_2,v^*)$ be given by~\eqref{F1},~\eqref{F2},~\eqref{v} and~\eqref{ukv*} for some basis $\{e_i\}$.

First suppose the condition~1 of the lemma is satisfied. Then we have $a:=(u_1)_{i,i+1} \ne0$, $b:=(u_1)_{i,i+2} \ne0$, $(u_1)_{i+1,i+2} =0$. So,
 $$\ker u_{1,i}=\langle e_i,be_{i+1}-ae_{i+2}\rangle \bmod F_{1,i-1}.$$
Consider the space $F'_{1,i+1}$ such that $F'_{1,i+1}/F_{1,i-1}=\ker u_{1,i}$. Then $$F'_{1,i+1}=g\cdot F_{1,i+1} \text{ where }g=\mathrm{id}-(a/b)E_{i+2,i+1}\in\GL(V).$$
A general point $y_1\in l_2(\tilde y)$ has a form $y_1=(F''_1,F_2,v,u_1,u_2,v^*)$ where $F''_{1,i+1}=F'_{1,i+1}$ and $F''_{1,j}=F_{1,j}$ for $j\ne i,i+1$. If $F''_1\ne g\cdot F_1$ then $F''_1=ghs_i\cdot F_1$ where $h=\mathrm{id}+cE_{i,i+1}$ for some $c\in\sk$, and $s_i\in\GL(V)$ is given by $s_ie_j=e_{s_i(j)}$.

Denote $g'=ghs_i$. Let $y'_1=(g')^{-1}\cdot y_1$. Then
 $$y'_1=\bigl(F_1,\ (g')^{-1}\cdot F_2,\ (g')^{-1}\cdot v,\ (\Ad(g')^{-1})\cdot u_1,\ (\Ad(g')^{-1})\cdot u_2,\ (g')^*v^*\bigr)$$
 Note that $g$ and $h$ preserve $F_2$, so we have
$$(g')^{-1}\cdot F_2= s_ih^{-1}g^{-1} \cdot F_2=s_i\cdot F_2$$
 Therefore $(g')^{-1}F_{2,j}=\langle e_{w'(1)},\dots,e_{w'(j)}\rangle$ for all~$j$. Further,
$$(g')^{-1}v=\sum_{j\in\b} d_js_ie_j=\sum_{j\in\b'} d_{s_i(j)}e_j$$
where $d_j\in\sk$ and $d_j\ne0$ for general $a,b,c$. This implies that $y_1'\in N^*\O_{w',\b'}\subset Y_{w',\b'}$. Hence $y_1=g'\cdot y_1'\in Y_{w',\b'}$.
Thus any general point $y_1\in l_2(\tilde y)$ lies in $Y_{w',\b'}$.
Therefore $l_2(\tilde y)\subset Y_{w',\b'}$. QED.

Other cases can be considered in a similar way.
\end{proof}

\subsection{}
\label{defn K_i}
For each $(w,\beta)\in RB$ there exists at most one  $(w',\beta')\in RB$
such that conditions of the above lemma are satisfied. We will denote
it by $(w',\beta')=K_i(w,\beta)$.

\begin{lem}
\label{3}
Let $W=\pi(Y_{w,\b})$ be the image of an irreducible component of $Y$.
Choose an
open dense subset $U\subset W$ such that the type $\lambda$ of the
nilpotent $u_1$
is the same for all points of $U$. Consider the set $C_W=\{(w',\beta')\in
RB \mid \pi(Y_{w',\beta'})=W\}$. There exists a natural bijection
$\tau_W\colon C_W\to \St(\lambda)$ such that for each $p_0\in U$ and
$(w',\b')\in C_W$ we have $Y_{w',\b '}\cap\pi^{-1}(p_0)=\Fl_{u_1,
\tau_W(w',\b')}\times\{p_0\}$.
\end{lem}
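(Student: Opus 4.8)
The plan is to recover from $(w,\b)$, via Proposition~\ref{N}, the data that $W$ retains, to read off $C_W$ and $\tau_W$ from this, and then to prove the fibre formula by a genericity argument. Since $\pi$ forgets only the flag $F_1$, a point of $W=\pi(Y_{w,\b})$ is a tuple $(F_2,v,u_1,u_2,v^*)$. For $\bt'=(\mu,\kappa,\rho)\in\bT$ and a standard tableau $T'_2$ of shape $\rho$, put $\tilde Y^{\bt',T'_2}:=\{(F_2,v,u_1,u_2,v^*)\in\tilde Y:(u_1,u_2,v,v^*)\in Z^{\bt'},\ F_2\in\Fl_{u_2}^{T'_2}\}$; these are pairwise disjoint, locally closed, and cover $\tilde Y$. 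A general point of $Y_{w,\b}=\overline{Y^{\bt(w,\b),T_1(w,\b),T_2(w,\b)}}$ (cf.\ the proof of Proposition~\ref{N} and~\ref{mir}) projects to a general point of $W$, so $W=\overline{\tilde Y^{\bt,T_2}}$ where $\bt=\bt(w,\b)=(\nu,\theta,\nu')$ and $T_2=T_2(w,\b)$; in particular $\lambda=\nu$.

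To identify $C_W$: recall (proof of Proposition~\ref{N}) that $Y$ is the disjoint union of the irreducible locally closed pieces $Y^{\bt',T'_1,T'_2}$, each open in its closure, an irreducible component of $Y$, and $Y_{\tw}=\overline{Y^{\bt(\tw),T_1(\tw),T_2(\tw)}}$. The restriction of $\pi$ to $Y^{\bt',T'_1,T'_2}$ is dominant onto $\tilde Y^{\bt',T'_2}$, for over a general point of $\tilde Y^{\bt',T'_2}$ the only constraint on a preimage is that $F_1$ lie in $\Fl_{u_1}^{T'_1}$, and $\Fl_{u_1}^{T'_1}\neq\varnothing$ for every standard tableau $T'_1$ of the type of $u_1$. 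Since $\pi$ is proper ($\Fl(V)$ being projective), $\pi(Y_{w',\b'})=\overline{\tilde Y^{\bt(w',\b'),T_2(w',\b')}}$, an irreducible variety depending only on the pair $(\bt(w',\b'),T_2(w',\b'))$; and distinct pairs give distinct closures, because the $\tilde Y^{\bt',T'_2}$ are disjoint and each is dense in the irreducible set $\overline{\tilde Y^{\bt',T'_2}}$. Hence $C_W=\{(w',\b')\in RB:\bt(w',\b')=\bt,\ T_2(w',\b')=T_2\}$, and by Proposition~\ref{N} the assignment $\tau_W\colon(w',\b')\mapsto T_1(w',\b')$ is a bijection from $C_W$ onto $\St(\nu)=\St(\lambda)$; this is the asserted natural bijection, and it proves the first statement.

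For the fibre formula, fix $(w',\b')\in C_W$, put $T'_1:=\tau_W(w',\b')$, and shrink $U$ so that every $p_0\in U$ also satisfies $(u_1,u_2,v,v^*)\in Z^{\bt}$ and $F_2\in\Fl_{u_2}^{T_2}$. Then $B_0:=\pi^{-1}(U)\cap Y^{\bt,T'_1,T_2}=\{(F_1,p_0):p_0\in U,\ F_1\in\Fl_{u_1}^{T'_1}\}$ is a non-empty open subset of the irreducible variety $Y^{\bt,T'_1,T_2}$, hence dense in it, hence open and dense in $Y_{w',\b'}=\overline{Y^{\bt,T'_1,T_2}}$. The proper dominant morphism $\pi|_{Y_{w',\b'}}\colon Y_{w',\b'}\to W$ has generic fibre of dimension $\dim\Fl_{u_1}(V)$: this fibre contains $\Fl_{u_1}^{T'_1}$, which is dense in the component $\Fl_{u_1,T'_1}$ and hence of that dimension since $\Fl_{u_1}(V)$ is pure-dimensional, and it lies inside $\pi^{-1}(p_0)\cong\Fl_{u_1}(V)$. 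By generic flatness the generic fibres of $\pi|_{Y_{w',\b'}}$ are equidimensional, so $\dim Y_{w',\b'}=\dim W+\dim\Fl_{u_1}(V)$; thus the closed set $Y_{w',\b'}\setminus B_0$ has dimension $<\dim Y_{w',\b'}$, and therefore meets $\pi^{-1}(p_0)$ in dimension $<\dim\Fl_{u_1}(V)$ for $p_0$ in a dense open subset of $W$. Shrinking $U$ once more, for $p_0\in U$ the set $(Y_{w',\b'}\setminus B_0)\cap\pi^{-1}(p_0)$ contains no irreducible component of $Y_{w',\b'}\cap\pi^{-1}(p_0)$, so $B_0\cap\pi^{-1}(p_0)=\Fl_{u_1}^{T'_1}\times\{p_0\}$ is dense in the closed set $Y_{w',\b'}\cap\pi^{-1}(p_0)$. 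As $\Fl_{u_1}^{T'_1}$ is irreducible with closure $\Fl_{u_1,T'_1}$ in $\Fl_{u_1}(V)$, this gives $Y_{w',\b'}\cap\pi^{-1}(p_0)=\Fl_{u_1,T'_1}\times\{p_0\}=\Fl_{u_1,\tau_W(w',\b')}\times\{p_0\}$, as required.

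I expect the last step to be the main obstacle: a priori the fibre of $\pi|_{Y_{w',\b'}}$ over a generic $p_0$ might acquire spurious irreducible components --- possibly of dimension below $\dim\Fl_{u_1}(V)$ --- besides $\Fl_{u_1,T'_1}$, and it is precisely the combination of generic flatness (equidimensionality of generic fibres) with the dimension estimate on $Y_{w',\b'}\setminus B_0$ that excludes this; the rest of the argument is bookkeeping on top of Proposition~\ref{N} and its proof.
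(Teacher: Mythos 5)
Your proof is correct and follows the same overall strategy as the paper: decompose $\pi^{-1}(U)$ into pieces indexed by $\St(\lambda)$, observe that $\pi(Y_{w',\b'})=W$ forces $Y_{w',\b'}$ to be the closure of one of these pieces (by dominance and properness of $\pi$), and read off the bijection. The notable difference is how the pieces are chosen and, correspondingly, how much work the fibre formula costs. The paper defines $U_T:=\bigcup_{p\in U}\bigl(\Fl_{u_1(p),T}\times\{p\}\bigr)$ using the \emph{closed} components $\Fl_{u_1(p),T}$ of $\Fl_{u_1(p)}(V)$. Since $\Fl_{u_1(p)}(V)$ is pure-dimensional and of constant type over $U$, these $U_T$ are exactly the irreducible components of $\pi^{-1}(U)$; in particular each $U_T$ is closed in $\pi^{-1}(U)$, so that $\overline{U_T}\cap\pi^{-1}(p_0)=U_T\cap\pi^{-1}(p_0)=\Fl_{u_1(p_0),T}\times\{p_0\}$ for every $p_0\in U$ and the fibre formula is immediate. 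You instead work with the open strata $\Fl_{u_1}^{T'_1}$ through the pieces $Y^{\bt,T'_1,T_2}$; since these are not closed in $\pi^{-1}(U)$, you need the generic-flatness/dimension count and two shrinkings of $U$ to rule out spurious components in the fibre. That argument is sound (generic fibres of a dominant morphism of irreducible varieties are equidimensional of dimension $\dim Y_{w',\b'}-\dim W$, and a $d$-dimensional irreducible closed subset cannot lie in a constructible set of dimension $<d$), but it proves the fibre formula only over a smaller dense open subset of $W$, whereas the paper's choice of $U_T$ gives it for free over all of the chosen $U$. The obstacle you flagged at the end is real for your version of the pieces, and it is exactly what the paper sidesteps by taking closed components from the start.
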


\begin{proof} For each $T\in\St(\l)$ consider the set
\eql{UT}{
U_T=\bigcup_{p\in U}\,\bigl(\Fl_{u_1(p),T}\times\{p\}\bigr)
\subset\pi^{-1}(U)\subset Y.}
We have $\bigcup_{T\in\St(\l)}\,U_T=\pi^{-1}(U)$. The sets $U_T$ are
irreducible components of $\pi^{-1}(U)$. Note that the equation
$\pi(Y_{w',\b'})=W$ is equivalent to the fact that $Y_{w',\b'}$
dominates $W$ (we use that $\pi$ is proper). In this case $Y_{w',\b'}$
must coincide with $\overline{U_T}$ for some~$T$. In particular,
$Y_{w,\b}=\overline{U_{T_0}}$ for some $T_0\in\St(\l)$.
Since $\dim U_T=\dim U_{T_0}=\dim Y_{w,\b}=\dim Y$ for each
$T\in\St(\l)$, each $\overline{U_T}$ is an irreducible component of $Y$
such that $\pi(\overline{U_T})=W$. So, we have a one-to-one correspondence
between the sets $C_W$ and $\St(\l)$. Obviously, this correspondence can
be described as in the statement of the lemma.
\end{proof}

\subsection{}
Let $\pi_i\colon \Fl(V)\to\Fl^{(i)}(V)$
be the natural projection.
For each $i\in\{1,\dots,N-1\}$ consider the set
$\Phi'_i\subset \St(\lambda) $
defined as follows:
$T\in \Phi'_i$ iff for a general
$F\in \Fl_{u_1,T}$ we have $u_1|_{F_{i+1}/F_{i-1}}=0$.

\begin{lem}
\label{4}
a) Let $T\in \St(\lambda)$. Then $T\in\Phi'_i\iff r_i(T)<r_{i+1}(T)\iff
c_i(T)\ge c_{i+1}(T)$ where $r_i(T)$ \textup(resp.\ $c_i(T)$\textup) stands for the number of row (resp.\ column) in $T$ containing $i$.

b) Let $T,T'\in \St(\lambda)$ and $T\ne T'$.
Then $\pi_i(\Fl_{u_1,T})=\pi_i(\Fl_{u_1,T'})$
iff one of the following conditions is satisfied up to interchanging
$T$ and $T':$

\noindent
$1$.\footnote{the condition $r_{i+2}(T)\le r_i(T)<r_{i+1}(T)$ is
equivalent to  $c_{i+2}(T)> c_i(T)\ge c_{i+1}(T)$.}
$r_{i+2}(T)\le r_i(T)<r_{i+1}(T)$ and $T'$ is obtained from $T$
by interchanging $i+1$ and $i+2$.

\noindent
$2$.\footnote{the condition $r_i(T)< r_{i+2}(T)\le r_{i+1}(T)$
is equivalent to  $c_i(T)\ge c_{i+2}(T)> c_{i+1}(T)$.}
$r_i(T)< r_{i+2}(T)\le r_{i+1}(T)$ and $T'$ is obtained
from $T$ by interchanging $i$ and $i+1$.
\end{lem}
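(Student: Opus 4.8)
The plan is to reduce the statement about the projection $\pi_i\colon\Fl(V)\to\Fl^{(i)}(V)$ of Springer fibers to the analysis of the local model already performed in Lemma~\ref{2}b, using the bijection $\tau_W$ of Lemma~\ref{3}. Concretely, fix the nilpotent $u_1$ of type $\lambda$ and let $W=\pi(Y_{w,\b})$ be the image of an irreducible component of $Y$ such that $\tau_W(w,\b)=T$; by Lemma~\ref{3}, for a general $p_0\in U$ the fiber $Y_{w,\b}\cap\pi^{-1}(p_0)$ is precisely $\Fl_{u_1,T}\times\{p_0\}$. The point is that the projection $\pi'_i$ on $Y$ of \ref{defn Phi_i} restricts, over the locus $U$, to the product of $\pi_i$ on the Springer-fiber factor with the identity; hence $\pi_i(\Fl_{u_1,T})=\pi_i(\Fl_{u_1,T'})$ if and only if $\pi'_i(Y_{w,\b})=\pi'_i(Y_{w',\b'})$, where $\tau_W(w',\b')=T'$ (note $T$ and $T'$ must have the same shape $\lambda$, so they correspond to colored permutations with the same $W$). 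So part b) will follow by translating each of the five cases of Lemma~\ref{2}b, read off the combinatorics of $\tau_W$, into conditions on the tableaux $T,T'$.

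For part a), I would argue directly. By definition $T\in\Phi'_i$ iff for general $F\in\Fl_{u_1,T}$ one has $u_1(F_{i+1})\subset F_{i-1}$, i.e. $u_1|_{F_{i+1}/F_{i-1}}=0$. Recall $\Fl_{u_1,T}$ is characterised by the condition that $\Sh(u_1|_{F_j})=\lambda^{(j)}(T)$ for all $j$, so $\lambda^{(i-1)},\lambda^{(i)},\lambda^{(i+1)}$ are obtained by adding, successively, the box of $i$ and then the box of $i+1$ to $T$'s subtableau. The operator $u_1$ induced on the two-dimensional space $F_{i+1}/F_{i-1}$ is zero precisely when the Jordan type jumps ``independently'' in the two steps, which in the standard description of Springer fibers (see~\cite{S}, or a direct computation with the Jordan basis $e_{i,j}$ as in the proof of Theorem~\ref{Thm1}) happens exactly when the two boxes lie in different columns of the same... more precisely when the box of $i+1$ is \emph{not} immediately to the right of the box of $i$ in the same row, equivalently when $i$ and $i+1$ are not in row-adjacent positions — which is the condition $r_i(T)<r_{i+1}(T)$, equivalently $c_i(T)\ge c_{i+1}(T)$. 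I would verify this by examining the $2$-dimensional quotient explicitly: $u_1$ maps $F_{i+1}/F_{i-1}$ to itself, and it is nonzero iff adding box $i+1$ to $\lambda^{(i)}$ extends the \emph{same} Jordan string that was just started by box $i$, which is the row-adjacency condition.

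For part b), I would go through the five cases of Lemma~\ref{2}b one at a time, computing $\tau_W$ of both sides. In each case the colored permutations $(w,\b)$ and $(w',\b')$ have equal image under $\pi'_i$ on $Y$, and one lies in $\Phi_{i+1}\setminus\Phi_i$ while the other lies in $\Phi_i\setminus\Phi_{i+1}$ (as noted in the proof of Lemma~\ref{2}). Translating $\Phi_j$ through $\tau_W$ gives $\Phi'_j$ by the above identification of $\pi'_i$ with $\pi_i$ over $U$; so under $\tau_W$ the pair $\{(w,\b),(w',\b')\}$ maps to a pair $\{T,T'\}$ with one tableau in $\Phi'_{i+1}\setminus\Phi'_i$ and the other in $\Phi'_i\setminus\Phi'_{i+1}$. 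Using part a), these two membership conditions translate to $r_{i+2}(T)\le r_i(T)<r_{i+1}(T)$ in the first alternative and $r_i(T)<r_{i+2}(T)\le r_{i+1}(T)$ in the second, while the relation $w'=ws_i,\b'=s_i(\b)$ or $w'=ws_{i+1}$, etc. translates, through the classical RSK-compatibility of $\tau_W$ with the action recorded in Lemma~\ref{2}b, into ``$T'$ is obtained from $T$ by swapping $i+1$ and $i+2$'' (resp. ``swapping $i$ and $i+1$''). The main obstacle I anticipate is establishing cleanly the compatibility of $\tau_W$ with the elementary operations $s_i,s_{i+1}$ on colored permutations — i.e. that the bijection of Lemma~\ref{3} intertwines the five moves of Lemma~\ref{2}b with the corresponding tableau swaps — since this is where the combinatorics of the Springer correspondence enters; but this can be checked case-by-case on the $3$-dimensional quotient $F_{1,i+2}/F_{1,i-1}$ exactly as in the proof of Lemma~\ref{2}b, keeping track of which of the two projective lines $l_1,l_2$ each component meets.
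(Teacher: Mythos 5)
Your argument for part a) has a genuine gap. You claim that $u_1|_{F_{i+1}/F_{i-1}}=0$ for general $F\in\Fl_{u_1,T}$ precisely when $i+1$ is not immediately to the right of $i$ in the same row, and assert this is the same as $r_i(T)<r_{i+1}(T)$. It is not: for a standard tableau the cases $r_i(T)=r_{i+1}(T)$, $r_i(T)<r_{i+1}(T)$, and $r_i(T)>r_{i+1}(T)$ are all possible and mutually exclusive, and your criterion would put the third case into $\Phi'_i$ while the lemma (correctly) excludes it. Concretely take $\lambda=(2,1)$, $i=2$, and $T$ the tableau with $1,3$ in the first row and $2$ in the second, so $r_2=2>r_3=1$ and the two boxes are in different rows. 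Then $\lambda^{(1)}=(1)$, $\lambda^{(2)}=(1,1)$, $\lambda^{(3)}=(2,1)$: a general flag in $\Fl_{u_1}^T$ has $F_2=\ker u_1$ and $F_1$ a \emph{generic} line in $F_2$, whereas $u_1 F_3=\im u_1$ is a \emph{fixed} line in $F_2$; hence $u_1 F_3\not\subset F_1$ generically and $T\notin\Phi'_2$. The asymmetry between $r_i<r_{i+1}$ and $r_i>r_{i+1}$ is precisely the nontrivial content of Spaltenstein's Lemma 5.11, which the paper simply cites for part a). Your heuristic ``the type jumps in different Jordan strings, so the induced map vanishes'' fails because $\Fl_{u_1}^T$ constrains only the Jordan types of $u_1|_{F_j}$, not the relative position of $F_{i-1}$ with respect to $\im(u_1|_{F_{i+1}})$; that position is forced only when the longer string is filled first, i.e. when $r_i<r_{i+1}$.

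For part b), your reduction through $\tau_W$ to Lemma~\ref{2}b is not the paper's route and as stated risks circularity. The paper defines the involution $K'_i$ directly on $\St(\lambda)$ by rerunning the two-projective-lines argument of Lemma~\ref{2}b inside the Springer fiber (over the locus where $u_1|_{F_{i+2}/F_{i-1}}$ has type $(2,1)$), then pins it down combinatorially using part a), the observation that $T$ and $T'$ must agree outside positions $i,i+1,i+2$ (because the types of $u_1|_{F_j}$ for $j\ne i,i+1$ coincide), a refinement via the intersection point $l_1\cap l_2$, and a short contradiction argument to rule out one spurious configuration. It never invokes $\tau_W$: the compatibility $\tau_W\circ K_i=K'_i\circ\tau_W$ is Lemma~\ref{5}, which the paper proves \emph{after} and \emph{using} the present lemma. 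The step you flag as the main obstacle --- the ``RSK-compatibility'' of $\tau_W$ with the five moves --- is not available to you at this point: $\tau_W$ is only defined geometrically in Lemma~\ref{3}, its combinatorial description is the mirabolic RSK, and the interaction of (mirabolic) RSK with single elementary moves is exactly what is being established, not something to quote. Your closing fallback (``check case-by-case on the $3$-dimensional quotient exactly as in the proof of Lemma~\ref{2}b'') is essentially the paper's actual argument and is sound; but once you adopt it, the detour through $\tau_W$ is doing no work.
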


\begin{proof}
a) This statement is equivalent to Lemma~5.11 in~\cite{S}.

b) Arguments similar to those used in the proof of Lemma~\ref{2}~b) show that we can define an involution $K'_i\colon \Phi'_i\btu\Phi'_{i+1}\to \Phi'_i\btu\Phi'_{i+1}$ such that $K'_i(\Phi'_i\setminus\Phi'_{i+1})=\Phi'_{i+1}\setminus\Phi'_i$ and such that a pair of tableaux $T\ne T'\in\St(\l)$ satisfies $\pi_i(\Fl_{u_1,T})=\pi_i(\Fl_{u_1,T'})$ iff $T\in\Phi'_i\btu\Phi'_{i+1}$ and $T'=K'_i(T)$. Thus we must prove that this involution can be described by the conditions~1,~2 of the lemma.

Suppose $T\in\Phi'_i\setminus\Phi'_{i+1}$ and $T'=K'_i(T)\in\Phi'_{i+1}\setminus\Phi'_i$. Then the first part of one of the conditions~1,~2 must be satisfied, and we must prove that $T'$ is given by the second part. The equation $\pi_i(\Fl_{u_1,T})=\pi_i(\Fl_{u_1,T'})$ implies that for $j\in\{0,\dots,N\}\setminus\{i,i+1\}$ and for general $F\in\Fl_{u_1,T}$ and $F'\in\Fl_{u_1,T'}$ the types of $u_1|_{F_j}$ and $u_1|_{F'_j}$ are the same. This means that $T$ and $T'$ can differ only in the position of $i,\ {i+1},\ {i+2}$.

Moreover, choose a general point $\tilde F\in\pi_i(\Fl_{u_1,T})$. Let $l_k(\tilde F)\ (k=1,2)$ be two irreducible components of $\pi_i^{-1}(\tilde F)$ defined similarly to the proof of Lemma~\ref{2}~b), and let $F',F''$ be general points of $l_1(\tilde F),l_2(\tilde F)$ respectively. Then $F'$ (resp.\ $F''$) is a general point of $\Fl_{u_1,T'}$ (resp.\ $\Fl_{u_1,T}$). In particular, $F'\in\Fl_{u_1}^{T'},\ F''\in\Fl_{u_1}^T$. Let $l_1(\tilde F)\cap l_2(\tilde F)=\{F_1\}$, and let $T_1$ be the tableau such that $F_1\in\Fl_{u_1}^{T_1}$. Then we obtain that $T$ (resp.\ $T'$) can differ from $T_1$ only in the position of $i$ and $i+1$ (resp.\ $i+1$ and $i+2$).

If $T$ satisfies the first part of the condition~2 of the lemma, the last condition and the conditions $T\in\Phi'_i\setminus\Phi'_{i+1}$ and $T'\in\Phi'_{i+1}\setminus\Phi'_i$ imply the desired statement.

If the first part of the condition~1 is satisfied, there is another a~priori possible case:
$$
\left\{\begin{aligned}
&r_i(T_1)<r_{i+1}(T_1)<r_{i+2}(T_1)\\
&c_i(T_1)>c_{i+1}(T_1)>c_{i+2}(T_1)\\
&\text{$T$  is obtained from~$T_1$ by interchanging $i$ and $i+1$ }\\
&\text{$T'$ is obtained from~$T_1$ by interchanging $i+1$ and $i+2$}
\end{aligned}
\right.
$$
In this case consider the tableau $T''$ obtained from~$T$ by interchanging $i+1$ and $i+2$. If we apply the above argument to the pair $K'_i(T''),T''$ instead of $T,T'$, we will obtain that $K'_i(T'')=T$, contradicting $K'_i(T')=T$. So, the lemma is proved.
\end{proof}

\subsection{}
For each $T\in \St(\lambda)$ there exists at most one tableau $T'$
satisfying the conditions of Lemma~\ref{4}. Denote it by $T'=K'_i(T)$.

\begin{lem}
\label{5}
Let $W$ be the image of an irreducible component of $Y$ under the map $\pi$.
Then for each $i\in\{1,\dots,N-1\}$ we have $\tau_W(\Phi_i\cap
C_W)\subset\Phi'_i$ and
 for each $i\in\{1,\dots,N-2\}$ we have $\tau_W\circ K_i=K'_i\circ\tau_W$.
 \end{lem}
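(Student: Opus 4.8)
The plan is to reduce both assertions to the fibrewise behaviour of $\pi'_i$ over the base $\tilde Y=\Fl(V)\times V\times\N\times\N\times V^*$, combining the description of the components of $Y$ over $\tilde Y$ from Lemma~\ref{3} with Lemmas~\ref{2}~b) and~\ref{4}~b). First I would note that the projection $\pi\colon Y\to\tilde Y$ forgetting $F_1$ (the one used in Lemma~\ref{3}, introduced in~\ref{XYZ}) factors as $\pi=q\circ\pi'_i$, where $q\colon Y_i\to\tilde Y$ forgets the remaining part $\tilde F_1$ of the first flag, and that over a point $p_0\in U$ the map $\pi'_i$ restricts on the fibre $\pi^{-1}(p_0)\cap Y=\Fl_{u_1(p_0)}(V)\times\{p_0\}$ to $\pi_i\times\id$. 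Using that, by Lemma~\ref{3} (unioned over $p_0\in U$), $\pi^{-1}(U)\cap Y_{w',\b'}=\bigcup_{p\in U}\Fl_{u_1(p),\tau_W(w',\b')}\times\{p\}$ is a dense open subset of the irreducible component $Y_{w',\b'}$, one obtains for every $(w',\b')\in C_W$ and $p_0\in U$ the identity
$$\pi'_i(Y_{w',\b'})\cap q^{-1}(p_0)=\pi_i\bigl(\Fl_{u_1(p_0),\tau_W(w',\b')}\bigr)\times\{p_0\};$$
indeed a preimage in $Y_{w',\b'}$ of a point of the left-hand side lies over $p_0$, hence in $\Fl_{u_1(p_0),\tau_W(w',\b')}\times\{p_0\}$ by Lemma~\ref{3}, and the reverse inclusion is clear.

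To prove the first assertion I would fix $(w',\b')\in C_W$ and put $T'=\tau_W(w',\b')$. Since $\{u_1|_{F_{1,i+1}/F_{1,i-1}}=0\}$ is closed in $Y$ and $\pi^{-1}(U)\cap Y_{w',\b'}$ is dense in $Y_{w',\b'}$, membership $(w',\b')\in\Phi_i$ is equivalent to this locus containing $\pi^{-1}(U)\cap Y_{w',\b'}$; by the description just displayed — and since the $u_1(p_0)$, $p_0\in U$, are mutually conjugate of type $\lambda$ while the condition is $\GL(V)$-invariant — this amounts to $\{u_1|_{F_{i+1}/F_{i-1}}=0\}$ containing $\Fl_{u_1(p_0),T'}$ for one, equivalently every, $p_0\in U$, i.e.\ to $T'\in\Phi'_i$. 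Hence $\Phi_i\cap C_W=\tau_W^{-1}(\Phi'_i)$, which gives in particular $\tau_W(\Phi_i\cap C_W)\subset\Phi'_i$.

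For the second assertion I would take $(w_1,\b_1)\in C_W$ on which $K_i$ is defined, set $(w_2,\b_2)=K_i(w_1,\b_1)$ and $T_j=\tau_W(w_j,\b_j)$. By Lemma~\ref{2}~b) one has $(w_1,\b_1)\ne(w_2,\b_2)$ and $\pi'_i(Y_{w_1,\b_1})=\pi'_i(Y_{w_2,\b_2})$; applying $q$ and using $\pi=q\circ\pi'_i$ gives $\pi(Y_{w_2,\b_2})=W$, so $(w_2,\b_2)\in C_W$, while injectivity of $\tau_W$ forces $T_1\ne T_2$. Intersecting $\pi'_i(Y_{w_1,\b_1})=\pi'_i(Y_{w_2,\b_2})$ with $q^{-1}(p_0)$ for some $p_0\in U$ and invoking the displayed identity yields $\pi_i(\Fl_{u_1(p_0),T_1})=\pi_i(\Fl_{u_1(p_0),T_2})$, whence $T_2=K'_i(T_1)$ by Lemma~\ref{4}~b) (whose hypotheses are exactly what this equality together with $T_1\ne T_2$ supplies). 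This proves $\tau_W(K_i(w_1,\b_1))=K'_i(\tau_W(w_1,\b_1))$ whenever the left-hand side is defined; that $\tau_W\circ K_i$ and $K'_i\circ\tau_W$ have the same domain follows by running the same chain in reverse, where one also uses that $\pi'_i(Y_{w,\b})$ equals the closure of $\pi'_i(Y_{w,\b})\cap q^{-1}(U)$, $\pi'_i$ being proper and $\pi^{-1}(U)\cap Y_{w,\b}$ dense in $Y_{w,\b}$.

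The main obstacle is the first step: pinning down that the $\pi$ of Lemma~\ref{3} forgets $F_1$ only, and checking that $\pi'_i$ genuinely refines it fibrewise over $\tilde Y$, so that an equality of images of irreducible components of $Y$ under $\pi'_i$ descends, fibre by fibre over $U$, to an equality of the $\pi_i$-images of the Spaltenstein components $\Fl_{u_1,T}$. Granting this, both parts of the lemma are formal consequences of Lemmas~\ref{2}~b),~\ref{3} and~\ref{4}~b), the only remaining care being the bookkeeping of the domains of the partial involutions $K_i$ and $K'_i$.
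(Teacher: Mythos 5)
Your proof is correct and follows essentially the same route as the paper: factor $\pi$ through $\pi'_i$ (you write $q$, the paper writes $\tilde\pi_i$), intersect with fibres over a general $p_0\in U$ via the identity from Lemma~\ref{3}, and transfer statements about $\Phi_i$ and $K_i$ to $\Phi'_i$ and $K'_i$ via Lemmas~\ref{2}~b) and~\ref{4}~b). You spell out the first inclusion (which the paper dismisses as "immediate from the definitions") more carefully and even upgrade it to the equality $\Phi_i\cap C_W=\tau_W^{-1}(\Phi'_i)$, and you add the bookkeeping needed to see that $\tau_W\circ K_i$ and $K'_i\circ\tau_W$ have the same domain — both reasonable refinements, but the same argument.
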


\begin{proof}
In the proof of Lemma~\ref{3} we have shown that for each $(w,\b)\in C_W$
we have $Y_{w,\b}=\overline{U_T}$ where $T=\tau_W(w,\b)\in\St(\l)$.
Now the first inclusion follows immediately from the definition of $\Phi_i$
and $\Phi'_i$. Let us prove the second equation. Suppose $(w',\b')=K_i(w,\b)$.
Let $\tilde\pi_i\colon Y_i\to\tilde Y$ be the projection satisfying
$\tilde\pi_i\circ\pi'_i=\pi$. Then $$W=\pi(Y_{w,\b})=
\tilde\pi_i(\pi'_i(Y_{w,\b}))=\tilde\pi_i(\pi'_i(Y_{w',\b'}))=\pi(Y_{w',\b'})$$
 So, $(w',\b')\in C_W$. Denote $T=\tau_W(w,\b),\ T'=\tau_W(w',\b')$.
Then for each $p_0\in U$ we have
$$\pi_i(\Fl_{u_1(p_0),T})= \pi_i(Y_{w,\b}\cap\pi^{-1}(p_0))=
\pi'_i(Y_{w,\b})\cap\tilde\pi_i^{-1}(p_0)$$
 and the same for~$T'$. The equation $(w',\b')=K_i(w,\b)$ means that
$\pi'_i(Y_{w,\b})= \pi'_i(Y_{w',\b'})$ and $(w,\b)\ne(w',\b')$.
Taking the intersection with $\pi^{-1}(p_0)$, we get $\pi_i(\Fl_{u_1,T})=
\pi_i(\Fl_{u_1,T'})$ and $T\ne T'$ (this inequality follows from the fact
that $\tau_W$ is bijective).

 So, we get $T'=K'_i(T)$.
\end{proof}

\subsection{} We write down the action of Hecke algebra generators on
bimodule $\CR$ in the Kazhdan-Lusztig basis $\utH_\tw$.
For $i\in\{1,\ldots,N-1\}$, recall the subset $\Phi_i\subset RB$ introduced
in~\ref{defn Phi_i}.

Let $\tw,\tw'\in RB$ and $\tw'<\tw$. Consider the restriction $IC(\zowt{})|_{\O_{\tw'}}$
of the $IC$-sheaf of~$\zowt{}$ to~$\O_{\tw'}$. It is a constant Tate complex on~$\O_{\tw'}$
concentrated in cohomological degrees less than $-\bn-\ell(\tw')$. We denote by $\mu(\tw',\tw)=
\mu(\tw,\tw')$ the dimention $\dim H^{-\bn-\ell(\tw')-1}(IC(\zowt{})_x)$ where $x\in\O_{\tw'}$.

\begin{prop}\label{HKL}
For any $\tw\in RB$ and $i\in{1,\ldots,N-1}$ we have
$$
\utH_\tw\cdot\utH_{s_i}=
\begin{cases}
-(\bv^{-1}+\bv)\utH_\tw, &
\textrm{if}\quad \tw\in\Phi_i  \\
\utH_{\tw*s_i} + \sum\limits_{\substack{\tw'<\tw\\\tw'\in\Phi_i}}
\mu(\tw',\tw) \utH_\tw'
& \textrm{if}\quad \tw\notin\Phi_i\\
\end{cases}
$$
\end{prop}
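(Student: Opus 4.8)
The goal is the formula for $\utH_\tw\cdot\utH_{s_i}$ in the Kazhdan--Lusztig basis. The strategy is the standard one for computing the action of $\utH_s$ on a KL basis, parallel to the proof of the analogous formula in the ordinary Hecke algebra (see e.g.~\cite{So}), but here using the geometric input supplied by Lemmas~\ref{2}--\ref{5}. I would work inside the geometric incarnation of $\CR$ (Corollary~\ref{balda}): $\utH_\tw$ is the class of $j_{!*}\Ql[\ell(\tw)+\bn](\tfrac{\ell(\tw)+\bn}{2})$, right convolution with $\utH_{s_i}$ corresponds to the functor $p_{i*}p_i^*[1](\tfrac12)$ where $p_i:\Fl(V)\to\Fl^{(i)}(V)$ forgets the $i$-th subspace of the second flag (a $\PP^1$-bundle), and the key ``parabolic induction'' description of the result is governed by whether $\overline\Omega_\tw$ is already pulled back from $\Fl^{(i)}(V)$ in the relevant factor.

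\textbf{First case ($\tw\in\Phi_i$).} Here the claim is $\utH_\tw\cdot\utH_{s_i}=-(\bv^{-1}+\bv)\utH_\tw$. By Lemma~\ref{2}a), $\tw\in\Phi_i$ means precisely that $u_1$ kills $F_{1,i+1}/F_{1,i-1}$ for a general conormal vector; more to the point, the condition $w(i)>w(i+1)$, $\b\cap\{i,i+1\}\ne\{i\}$ is exactly the condition that $\overline\Omega_\tw$ is stable under the $\PP^1$-bundle $p_i$ (i.e.\ $\overline\Omega_\tw = p_i^{-1}p_i(\overline\Omega_\tw)$ on the relevant factor). In that situation $j_{!*}\Ql$ on $\overline\Omega_\tw$ is pulled back from downstairs up to shift and twist, so $p_{i*}p_i^*$ of it is $\Ql[1](\tfrac12)\oplus\Ql[-1](-\tfrac12)$ tensored with the same sheaf; translating into the Grothendieck group and normalising shifts this is multiplication by $-(\bv^{-1}+\bv)$. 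I would phrase this cleanly using that $\utH_\tw=\utH_{\tw_0}\cdot(\text{something})$ or directly via the projection formula.

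\textbf{Second case ($\tw\notin\Phi_i$).} Then $\overline\Omega_\tw\to p_i(\overline\Omega_\tw)$ is generically finite (in fact birational over the open orbit) and $p_i^{-1}p_i(\overline\Omega_\tw)=\overline{\Omega_{\tw*s_i}}$, so $p_{i*}p_i^*\,j_{!*}\Ql[\text{shift}]$ is a self-dual complex supported on $\overline{\Omega_{\tw*s_i}}$ with $\utH_{\tw*s_i}$ as leading term. By Corollary~\ref{pop} and the Decomposition Theorem it decomposes into a sum of shifted $IC$-sheaves $\utH_{\tw'}$ with $\tw'\le\tw*s_i$, and self-duality plus the degree bound force the multiplicities to be given by stalk dimensions in degree $-\bn-\ell(\tw')-1$. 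One then shows that only $\tw'<\tw$ with $\tw'\in\Phi_i$ can contribute with the correct parity: a $\tw'$ can appear only if $IC(\overline\Omega_\tw)|_{\Omega_{\tw'}}$ is nonzero in the right degree, which by the defect-one condition and pointwise purity (Proposition~\ref{varshav}, Corollary~\ref{pp}) pins down the coefficient to be exactly $\mu(\tw',\tw)$; and stability of $\overline{\Omega_{\tw*s_i}}$ under $p_i$ forces $\tw'\in\Phi_i$. This is where Lemmas~\ref{2}b)--\ref{5} enter: they identify the orbit-combinatorics of the $K_i$-involution with the tableau combinatorics $K'_i$, which is exactly what is needed to match the $\mu$-coefficients with the $W$-graph data and to see that the sum ranges over $\Phi_i$.

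\textbf{Main obstacle.} The genuinely hard point is the second case: controlling which $\utH_{\tw'}$ actually occur and with what multiplicity. The bound ``$\tw'\in\Phi_i$'' and the identification of the coefficient with $\mu(\tw',\tw)$ rely on a careful analysis of $p_{i*}p_i^* IC(\overline\Omega_\tw)$ fibrewise over $\Omega_{\tw'}$, using that the $\PP^1$-fibre meets $\overline\Omega_\tw$ in a configuration controlled by Lemma~\ref{2}b) (two lines meeting at a point, in the $(2,1)$-type case) — this is the geometric reason the correction terms are labelled by $\Phi_i$ and weighted by the first Betti number of the intersection-cohomology stalk. Establishing this matching rigorously, rather than the bookkeeping with shifts and twists (which is routine), is the crux; everything else follows by the standard purity/Decomposition-Theorem argument already assembled in the earlier part of the paper.
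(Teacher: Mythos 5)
Your overall strategy matches the paper's: realize $\utH_\tw\cdot\utH_{s_i}$ as the proper pushforward of the $IC$-sheaf along the convolution map $\psi\colon S=\zos{i}\uf\zowt{}\to X$ (equivalently, as $p_{i*}p_i^*$ on the relevant factor), split into the two cases $\tw\in\Phi_i$ and $\tw\notin\Phi_i$, use the $\PP^1$-bundle observation in the first case, and use perversity, purity, and self-duality (Decomposition Theorem) to reduce the second case to computing stalks along the $\PP^1$-fiber $Q=\psi^{-1}(x)$ over $x\in\O_{\tw'}$.

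One point needs correction, though. You write that the identification of the multiplicity with $\mu(\tw',\tw)$ and the restriction to $\tw'\in\Phi_i$ rely on Lemmas~\ref{2}b)--\ref{5} (the $K_i$/$K'_i$ combinatorics). That is not the case: those lemmas belong to the microlocal-cell comparison in Theorem~\ref{L} and do not enter the proof of Proposition~\ref{HKL}. The only ingredient from that group that is needed here is the characterization of $\Phi_i$ in Lemma~\ref{2}a). The multiplicity computation in the paper is a direct sheaf-theoretic calculation on the fiber $Q\cong\PP^1$: split $Q$ into the open part $U$ lying over $\O_{\tw'*s_i}$ and the closed point over $\O_{\tw'}$; use that $IC(S)|_U=IC(\zowt{})|_{\phi(U)}[1]$ and the strict support estimates for $IC$-sheaves on lower strata. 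If $\tw'\notin\Phi_i$ then $\tw'*s_i>\tw'$ and the relevant top-degree stalk vanishes; if $\tw'\in\Phi_i$ then $\tw'*s_i=\tw'$ and the top-degree stalk dimension is by definition $\mu(\tw',\tw)$. That is what pins down both the label set $\Phi_i$ and the coefficient; no tableau combinatorics is invoked.

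Also, a minor correction of normalisation: the stalk dimension that computes $\mu(\tw',\tw)$ is the degree $-\bn-\ell(\tw')-1$ stalk of $IC(\zowt{})$, not of the pushforward $\psi_*IC(S)$; the latter has multiplicity visible in degree $-\bn-\ell(\tw')$, and the shift is absorbed by the $[1]$ from the $\PP^1$-bundle. Your phrasing "stalk dimensions in degree $-\bn-\ell(\tw')-1$" needs to be anchored to the right complex. These are small slips; the overall plan and the geometric framework you describe agree with the paper.
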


\proof
 By definition, $\utH_\tw$ is the class of the $IC$-sheaf of the orbit
closure $\zowt{}$,\quad $\utH_\tw=[IC(\zowt{})]$.
Therefore $\utH_\tw\cdot\utH_{s_i}$ is the class of the
direct image  of the $IC$-sheaf
under the map $\psi\colon S=\zos{i}\times_{{}_{\Fl(V)}}\zowt{}\to X$.
If $\tw\in\Phi_i$ then the image of this map coincides with $\zowt{}$ and all
its fibers are isomorphic to $\PP^1$, hence we obtain the required formula.
If $\tw\notin\Phi_i$ then $\im(\psi)=\zowts\quad(s=s_i)$ and all the fibers
of~$\psi$ are isomorphic either to $\PP^1$ or to a point. We  claim that
the direct image  of the $IC$-sheaf under the map $\psi$ is perverse. Indeed,
pick an orbit~$\O_{\tw'}$ inside $\zowts$. We need  to show that $\psi_*(
IC(S))|_{\O_{\tw'}}$ is concentrated in degrees $\le -\bn-\ell(\tw')$.
Let $x\in \O_{\tw'}$ and $Q=\psi^{-1}(x)$. Then we have $\psi_*(IC(S))_x=
R\Gamma(Q,IC(S)|_Q)$. If $\tw'=\tw*s$ then $Q$ is a point and $R\Gamma(Q,
IC(S)|_Q)=\ol\QQ_l[\bn+\ell(\tw')]$. If $Q$ is a point but $\tw'\ne\tw*s$
then the properties of $IC$-sheaf imply that $H^m(IC(S)|_Q)=0$
for $m\ge -\bn-\ell(\tw')$.

Otherwise, if $Q\cong\PP^1$ (this happens if and only if $\tw'*s<\tw$),
let $U= Q\cap\phi^{-1}(\O_{\tw'*s})$ where $\phi\colon S \to\zowt{}$
is the projection
to the second factor. Then $U$ is open and dense in~$Q$,
and $IC(S)|_U$ is constant.
The complex $IC(S)|_U$ is concentrated
in degrees $\le-\bn-\ell(\tw')-2$, and $IC(S)|_{Q\setminus U}$ is concentrated
in degrees $\le-\bn-\ell(\tw')-1$. From this we obtain that
$H^m(IC(S)|_Q) =0$ for
$m>-\bn-\ell(\tw')$ and $\dim H^{-\bn-\ell(\tw')}(IC(S)|_Q)=
\dim H^{-\bn-\ell(\tw')-2}
(IC(S)_{x'})$ where $x'\in U$. Note that if we identify $U$ with
$\phi(U)$, we have
$IC(S)|_U= IC(\zowt{})|_{\phi(U)}[1]$. Besides $\phi(U)\subset\O_{\tw'*s}$.
If $\tw'\notin\Phi_i$ then $\tw'*s>\tw'$, and therefore
$\dim H^{-\bn-\ell(\tw')-2}
(IC(S)_{x'})=\dim H^{-\bn-\ell(\tw'*s)}(IC(\zowt{})_{\phi(x')})=0$.
If $\tw'\in\Phi_i$ then $\tw'*s=\tw'$, and therefore
$\dim H^{-\bn-\ell(\tw')-2}
(IC(S)_{x'})=\dim H^{-\bn-\ell(\tw*s)-1}(IC(\zowt{})_{\phi(x')})=
\mu(\tw'*s,\tw)=
\mu(\tw',\tw)$.
So we get
\begin{gather*}
 \dim H^m(\psi_*(IC(S))_x)= \dim H^m(IC(S)|_Q)=0 \quad
   \text{if $m>-\bn-\ell(\tw')$;} \\
  \begin{split}
 \dim H^{-\bn-\ell(\tw')}(\psi_*(IC(S))_x) &= \dim H^{-\bn-\ell(\tw')}
(IC(S)|_Q)\\
	&=
  \begin{cases}
    1 &\text{if $\tw'=\tw*s$;} \\
    \mu(\tw',\tw) &\text{if $\tw'<\tw$ and $\tw\in\Phi_i$;} \\
    0 &\text{otherwise.}
  \end{cases}
   \end{split}
\end{gather*}
Now, taking in account that $\psi_*(IC(S))$ is selfdual,
we obtain the desired decomposition.
\qed

\begin{Rem} \label{W-graph}
Note that Proposition~\ref{HKL} implies that the bimodule~$\R$ arises from a
certain $\gS_N \times\gS_N^0$-graph~$\Gamma_{\mir}$ in the
terminology of~\cite{KL},
where $\gS_N^0 \cong\gS_N$ is the opposed group to~$\gS_N$, i.~e.\
$\gS_N^0=\{ g^0,\ g\in\gS_N \}$ with multiplication given by $g^0 h^0= (hg)^0$.
The set of vertices of~$\Gamma_{\mir}$ is~$RB$;
the labels~$I_\tw$ are defined by
$I_\tw = \{\,s_i^0 \mid \tw\in\Phi_i\,\} \,\cup\,
 \{\,s_i \mid \tw^{-1}\in\Phi_i\,\}$;
the edges are $\{\tw,\tw'\}$ such that $\tw'<\tw$, $\mu(\tw',\tw)\ne0$ and
$I_\tw \ne I_{\tw'}$;
finally, the multiplicities are $\mu(\tw,\tw')$.
\end{Rem}

\subsection{One-sided microlocal cells}
Let $W=\pi(Y_{\tw})$ \quad $(\tw\in RB)$ be the image of an irreducible
component of $Y$.
We define the {\itshape right microlocal cell} corresponding to $W$ as the
set $C_W$ described in Lemma~\ref{3}.
We define a {\em left microlocal cell} as the image of a right microlocal
cell under the involution $\tw\mapsto\tw^{-1}$.
In terms of bijection $\RSK$ introduced in~\ref{RSKmir}, two-sided microlocal
cells are given by condition
$\bt(\tw)=const$. The left microlocal cells  are given by conditions
$\bt(\tw)=const$
and $T_1(\tw)=const$ , and the right microlocal cells are given by conditions
$\bt(\tw)=const$ and $T_2(\tw)=const$. Each two-sided microlocal
cell is a union of left microlocal cells and of right microlocal cells as well;
moreover, each left microlocal cell and right microlocal cell inside
the same two-sided microlocal cell
intersect exactly in one element. Two-sided microlocal cells are the
minimal subsets which are unions of both left and right microlocal cells.

Now recall that $\bt(\tw)=(\nu,\theta,\nu')=:(\nu(\tw),\theta(\tw),\nu'(\tw))$.

\begin{thm}
\label{L}
a) Each left (right, two-sided) microlocal cell is contained in
a left (resp. right, bimodule) Kazhdan-Lusztig cell.

b) Conversely, for $\tw_1,\tw_2$ in the same left (right, bimodule)
Kazhdan-Lusztig
cell, we have $\nu(\tw_1)=\nu(\tw_2)$ (resp. $\nu'(\tw_1)=\nu'(\tw_2)$,
resp. $\nu(\tw_1)=\nu(\tw_2)$ and $\nu'(\tw_1)=\nu'(\tw_2)$).
\end{thm}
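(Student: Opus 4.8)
\emph{Part (a).}
My plan is to prove (a) first and deduce (b) from it together with Proposition~\ref{algH}. For (a) I would reduce everything to one assertion: if $\tw'=K_i(\tw)$ (the operation of~\ref{defn K_i}), then $\tw$ and $\tw'$ lie in the same right Kazhdan--Lusztig cell. This is the mirabolic counterpart of the fact (\cite{KL}) that Kazhdan--Lusztig star operations preserve one-sided cells, and I would check it directly from the explicit $W$-graph of $\CR$: in cases 1--4 of Lemma~\ref{2}b one has $K_i(\tw)=\tw s_i$ or $\tw s_{i+1}$, and $\{\tw,K_i(\tw)\}$ appears as a two-element ``simple component'' for the generators $\utH_{s_i},\utH_{s_{i+1}}$ in the formulas of Proposition~\ref{HKL}, so each of $\utH_\tw,\utH_{K_i(\tw)}$ divides the other in the appropriate one-sided sense; in case 5, where $K_i$ is the colour change $\b\mapsto\b\cup\{i+1\}$, the third line of Proposition~\ref{HKL} exhibits $\utH_{K_i(\tw)}$ with coefficient $1$ in a product $\utH_\tw\utH_{s_i}$, and the reverse divisibility follows since $K_i$ is an involution. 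Granting this, fix a right microlocal cell $C_W$: by Lemma~\ref{3} the map $\tau_W\colon C_W\to\St(\nu)$ is a bijection, and by Lemma~\ref{5} it intertwines the moves $K_i$ with the tableau moves $K'_i$ of Lemma~\ref{4} (carrying $\Phi_i\cap C_W$ into $\Phi'_i$). Since the standard Young tableaux of a fixed shape form a single dual--equivalence class, i.e.\ are joined by chains of the moves $K'_i$, any two elements of $C_W$ are joined by chains of moves $K_i$ and hence lie in one right Kazhdan--Lusztig cell. The anti-involution $\tw\mapsto\tw^{-1}$ then gives the left statement; and chaining ``right'' moves (which vary $T_1$ at fixed $T_2$) with ``left'' moves (which vary $T_2$ at fixed $T_1$) connects any two elements of a two-sided microlocal cell within a single bimodule cell, giving the two-sided statement.

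\emph{Part (b).}
By (a) a left (resp.\ right, bimodule) Kazhdan--Lusztig cell is a union of left (resp.\ right, two-sided) microlocal cells, so it suffices to show that $\nu$ (resp.\ $\nu'$) is constant on each Kazhdan--Lusztig cell. After $\otimes\QQ(\bv)$, Proposition~\ref{algH} makes $\alg{\R}$ the semisimple bimodule $\bigoplus_{(\nu,\theta,\nu')\in\bT}V_\nu^{*}\otimes_{\QQ(\bv)}V_{\nu'}$; consequently the subquotient attached to any Kazhdan--Lusztig cell --- the span of the $\utH_\tw$ over the cell modulo the span over the strictly smaller elements --- becomes a direct sum of simple bimodules $V_\nu^{*}\otimes V_{\nu'}$, and a sum of copies of a single $V_\nu^{*}$ (resp.\ $V_{\nu'}$) when restricted to the left (resp.\ right) action. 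I would then show that such a cell subquotient cannot contain two simple summands with distinct $(\nu,\nu')$, so it is bi-isotypic (hence $V_\nu^{*}$-isotypic for a left cell, $V_{\nu'}$-isotypic for a right cell), and identify the label as $(\nu(\tw),\nu'(\tw))$ by comparing dimensions with the microlocal cells produced in (a) and with the geometric description in~\ref{mir}. The proof of bi-isotypy would be a leading-coefficient analysis in the Kazhdan--Lusztig basis: combining the $W$-graph formulas of Proposition~\ref{HKL}, the positivity of Corollary~\ref{pp}, and the combinatorial control of Lemmas~\ref{2}--\ref{5} (which pin down exactly which $W$-graph edges can change $\nu$ or $\nu'$), one builds, for a suitable pre-order on $\bT$ refining the closure order among the varieties $Z^{\bt}$ of~\ref{XYZ}, a filtration of $\CR$ by sub-bimodules spanned by Kazhdan--Lusztig basis vectors whose graded pieces are bi-isotypic; the cell partition refines the partition induced by this filtration, whence the claim.

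\emph{Main obstacle.}
The hard part will be the bi-isotypy claim in (b). It is weaker than Conjecture~\ref{micro vs KL} (which asserts outright irreducibility of the cell subquotients), but it still requires understanding how $\nu,\nu'$ move along $W$-graph edges; since a single generator $\utH_{s_i}$ can both raise and lower $\nu$ in dominance order, the needed monotonicity must be extracted from the finer closure order on the $Z^{\bt}$ together with the positivity of the structure constants --- which is exactly why the argument cannot be reduced to a naive dominance filtration. Part (a), by contrast, is a finite bounded case-check resting on Proposition~\ref{HKL} and Proposition~\ref{thm_explicit}, the only ingredient with no classical analogue being the colour-change move of case 5 of Lemma~\ref{2}b.
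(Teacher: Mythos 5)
Your part~(a) is the paper's argument, just stated in slightly different words. The paper reduces, via Proposition~\ref{2el}, to the claim that a single move $\tw'=K_i(\tw)$ keeps one inside a right KL cell, and then checks this uniformly for all five cases of Lemma~\ref{2}b by observing that, up to swapping, one always has $\tw\lessdot\tw'$, $\tw'=\tw*s_i$ or $\tw*s_{i+1}$, and $\tw,\tw'$ lie in complementary halves of $\Phi_i\btu\Phi_{i+1}$; then $\mu(\tw,\tw')=1$ (because $\ell(\tw')=\ell(\tw)+1$) together with Proposition~\ref{HKL} gives membership in the cell ideal in both directions. One thing you should not leave implicit is the step you summarize by ``the reverse divisibility follows since $K_i$ is an involution'': to produce $\utH_\tw$ out of $\utH_{\tw'}$ you must apply the \emph{other} generator ($s_{i+1}$ when $\tw'=\tw*s_i$, or $s_i$ when $\tw'=\tw*s_{i+1}$) and invoke $\mu(\tw,\tw')=1$ in the second case of Proposition~\ref{HKL}; involutivity of $K_i$ alone does not close this loop.

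Your part~(b), however, has a genuine gap. The route you propose --- deduce constancy of $\nu,\nu'$ from bi-isotypy of the cell subquotients in the decomposition of Proposition~\ref{algH}, with bi-isotypy established by a leading-coefficient/filtration argument refining a preorder on $\bT$ --- is exactly the place where all the difficulty lives, and you say so yourself (``The hard part will be the bi-isotypy claim''). As written this is a plan, not a proof: no filtration on $\CR$ is actually constructed, no concrete preorder on $\bT$ is specified, and the passage from ``cell subquotient is bi-isotypic'' to ``its type is $(\nu(\tw),\nu'(\tw))$'' is only gestured at. The paper's actual argument for part~(b) is shorter and sidesteps bi-isotypy entirely: realize $\CR$ in the Grothendieck group of $G$-equivariant Hodge $D$-modules on $X$; the singular support functor lands in $G$-equivariant coherent sheaves on $T^*X$ supported on $Y$ and is compatible with convolution against the analogous category on $\Fl(V)\times\Fl(V)$. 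Therefore, if $IC_{\tw_1}$ is a direct summand of a convolution $IC_{\tw_2}*IC_w$, then a general conormal vector $(^1u_1,\,^1u_2,\,^1v,\,^1v^*)$ of $\Omega_{\tw_1}$ has $^1u_1$ in the closure of the $G$-orbit of $^2u_1$ for a general conormal to $\Omega_{\tw_2}$, and similarly for $u_2$; running this in both directions over a cell forces $\nu(\tw_1)=\nu(\tw_2)$ and $\nu'(\tw_1)=\nu'(\tw_2)$. So part~(b) rests on a geometric ingredient --- compatibility of singular support with convolution for Hodge $D$-modules --- that your outline does not use, and that your proposed algebraic substitute does not yet replace.
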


\begin{proof}
It suffices to prove the theorem for one-sided cells and, by the reason
of symmetry, only for right-handed ones.
Let us formulate the following auxiliary proposition.

\begin{prop}\label{2el}
Two elements $\tw,\tw'$ lie in the same right microlocal cell iff there is
a sequence $\tw=\tw_1,\tw_2,\ldots,\tw_m=\tw'$ such that for each
$j=1,\ldots,m-1$ there is $i\in\{1,\ldots,N-2\}$ such that
$\tw_{j+1}=K_i(\tw_j)$ (see~\ref{defn K_i}).
\end {prop}

\begin{proof}
It is easy to see from the definition of operations $K_i$ that if
$\tw'=K_i(\tw)$ then $\tw$ and $\tw'$ lie in the same right microlocal cell.
This implies the ``only if'' direction. Conversely, let $\tw$ and
$\tw'$ lie in
a microlocal cell corresponding to $W$. Consider the bijection
$\tau_{W}: C_W\to\St(\l)$ of Lemma~\ref{3}. In view of Lemma~\ref{5}
it suffices to prove that any 2 standard Young tableaux of the same shape
can be obtained from each other by a successive application of
operations $K'_i$.

It can be checked directly.
\end{proof}

It is easy to check that if $\tw'=K_i(\tw)$ then, up to permutation of
$\tw$ and $\tw'$, we have $\tw\lessdot\tw'$, moreover
$$
\begin{cases}
\tw'=\tw*s_i  \\
\tw\in\Phi_{i+1} \\
\tw'\notin\Phi_{i+1} \\
 \end{cases}
\textrm{or}
 \quad\quad
\begin{cases}
\tw'=\tw*s_{i+1}  \\
\tw\in\Phi_i \\
\tw'\notin\Phi_i \\
 \end{cases}
 $$

Observe that if $\tw<\tw'$ and $\ell(\tw')=\ell(\tw)+1$
then $\mu(\tw,\tw')=1$, so,
taking in account Proposition~\ref{HKL}, it follows that if $\tw'=K_i(\tw)$
then $\tw'$ and $\tw$ lie in the same right Kazhdan-Lusztig cell.
Therefore, in view of Proposition~\ref{2el}, each microlocal cell lies in a
Kazhdan-Lusztig cell.
So the proof of Theorem~\ref{L}.a) is finished.

For the proof of b), we can realize the $\mathcal H$-bimodule $\mathcal R$
in the Grothendieck group of $G$-equivariant Hodge $D$-modules on $X$.
Then we have the functor of singular support from the category of
$G$-equivariant Hodge $D$-modules on $X$ to the category of $G$-equivariant
coherent sheaves on $T^*X$ supported on $Y$.
Similarly, we have the functor of singular support
from the category of $G$-equivariant Hodge $D$-modules on $\Fl(V)\times\Fl(V)$
to the category of $G$-equivariant coherent sheaves on the Steinberg variety
of $G$. These functors are compatible with the convolution action.
Thus if $IC_{\tw_1}$ is a direct summand of
the convolution of $IC_{\tw_2}$ with
$IC_w$, and $(\ ^1u_1,\ ^1u_2,\ ^1v,\ ^1v^*)$ (resp.
$(\ ^2u_1,\ ^2u_2,\ ^2v,\ ^2v^*)$) is a general element in the conormal bundle
to $\Omega_{\tw_1}$ (resp. $\Omega_{\tw_2}$), then $^1u_1$ must lie in the
closure of $G$-orbit of $^2u_1$ (and similarly, $^1u_2$ must lie in the closure
of $G$-orbit of $^2u_2$). The proof of b) is completed.
\end{proof}

\subsection{Fourier duality} \label{FD}
In this subsection we will write $X(V),Y(V),\O_\tw(V),\dots$ instead of
$X,Y,\O_\tw,\dots$ to emphasize the dependence on~$V$.
All the statements in this subsection are straightforward and
left to the reader as an exercise.

Note that we have a canonical isomorphism $Y(V)\cong Y(V^*)$,
$(F_1,F_2,v,u_1,u_2,v^*)\mapsto(F_1^*,F_2^*,v^*,u_1^*,u_2^*,v)$. Therefore
we get a bijection between the sets of their irreducible components, which
gives rise to an involution~$\F$ on $RB$.

\begin{prop}\label{F:RB}
For any $\tw=(w,\b)\in RB$ we have $\F(\tw)=(w_0ww_0,\{1,\dots,N\}\setminus
w_0(\b))$ where $w_0\in\gS_N$ is the longest element, i.~e.\ $w_0(i)=N+1-i$.
\qed\end{prop}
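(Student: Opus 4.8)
The plan is to trace a general conormal vector through the canonical isomorphism $Y(V)\cong Y(V^*)$ which defines $\F$, and to read off the orbit in $\Fl(V^*)\times\Fl(V^*)\times V^*$ into which it falls.

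First I would record the elementary point that the isomorphism $Y(V)\cong Y(V^*)$ carries the component $\overline{N^*\Omega_{\tw}(V)}$ onto $\overline{N^*\Omega_{\F(\tw)}(V^*)}$, hence maps the dense open subvariety $N^*\Omega_{\tw}(V)$ to a dense constructible subset of full dimension of that closure; such a subset contains a dense open subset, which therefore meets the dense open $N^*\Omega_{\F(\tw)}(V^*)$. So a general conormal vector $(F_1,F_2,v,u_1,u_2,v^*)\in N^*\Omega_{\tw}(V)$ is sent to a point $(F_1^*,F_2^*,v^*,u_1^*,u_2^*,v)$ of $N^*\Omega_{\F(\tw)}(V^*)$; in particular $(F_1^*,F_2^*,v^*)\in\Omega_{\F(\tw)}(V^*)$, and it remains only to identify this orbit explicitly.

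Next I would fix a basis $e_1,\dots,e_N$ of $V$ adapted to $(F_1,F_2,v)$ as in~\eqref{F1},~\eqref{F2},~\eqref{v}, and pass to the basis $f_i:=e^*_{N+1-i}$ of $V^*$. A short computation with annihilators shows $F_{1,N-i}^\perp=\langle f_1,\dots,f_i\rangle$, so $F_1^*$ is the standard flag for $\{f_i\}$, while analysing $F_{2,N-j}^\perp$ gives that the relative position of $F_1^*$ and $F_2^*$ is $w_0ww_0$ — the only identity to check being $(w_0ww_0)^{-1}(j)=N+1-w^{-1}(N+1-j)$. This pins down the permutation part of $\F(\tw)$.

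For the colour part I would analyse the conormal equation $u_1+u_2+v\otimes v^*=0$ in these coordinates, exactly as in the proof of Lemma~\ref{2}a): since $u_1$ is strictly upper triangular in $\{e_i\}$ and $(u_2)_{i,j}=0$ unless $w(i)<w(j)$, the $(j,j)$-entries force $c_j=0$ for $j\in\b$ (where $v^*=\sum_j c_je_j^*$), while for $j\notin\b$ the $(i,j)$-entries with $i\in\b$ show that $c_j$ is a free parameter. Here the one thing to verify is that $(u_1)_{i,j}$ (free precisely when $i<j$) and $(u_2)_{i,j}$ (free precisely when $w(i)<w(j)$) cannot both be forced to vanish, i.e.\ that $i>j$ and $w(i)>w(j)$ is impossible when $i\in\b$ and $j\notin\b$; but this is exactly the configuration excluded by the definition of $RB$ in Lemma~\ref{1}. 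Consequently a general conormal vector has $c_j\ne0$ for every $j\notin\b$, so that $v^*=\sum_{j\notin\b}c_jf_{N+1-j}$ is supported on $\{1,\dots,N\}\setminus w_0(\b)$ in the basis $\{f_i\}$; after rescaling the basis vectors (which alters neither $F_1^*$ nor $F_2^*$) I may assume those coefficients are all $1$, and then~\eqref{v} identifies the orbit as $\F(\tw)=(w_0ww_0,\{1,\dots,N\}\setminus w_0(\b))$. I expect the main obstacle to be this last step: guaranteeing, via the $RB$ condition, that the covector of a general conormal vector is supported on the \emph{full} complement $\{1,\dots,N\}\setminus w_0(\b)$ and not merely on a proper subset of it; the dual-flag bookkeeping needed to get $w_0ww_0$ rather than its inverse also requires a little care.
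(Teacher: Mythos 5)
The paper supplies no proof of Proposition~\ref{F:RB} --- it explicitly states that all claims in subsection~\ref{FD} are ``straightforward and left to the reader as an exercise'' --- so there is nothing to compare your argument against; your proposal is a correct completion of that exercise along the intended lines. In particular you correctly identify the two points that need actual verification: the dual-flag computation giving $w_0ww_0$, and the fact that for a general conormal vector the coefficient $c_j$ of $v^*$ is nonzero exactly for $j\notin\beta$, the latter resting on the $RB$-condition from Lemma~\ref{1} (as in the proof of Lemma~\ref{2}a), it forbids $i\in\beta$, $j\notin\beta$, $i>j$, $w(i)>w(j)$, which is precisely what would force $c_j=0$).
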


Further, we have an isomorphism $\psi\colon Z(V)\stackrel\sim\to Z(V^*)$.
It carries images of irreducible
components of~$Y(V)$ to images of irreducible components of~$Y(V^*)$,
therefore $\psi(Z_\bt(V))=Z_{\bt^*}(V^*)$ for some $\bt^*\in\bT$.

\begin{prop}\label{F:T}
If $\bt=(\nu,\theta,\nu')\in\bT$ then $\bt^*=(\nu,\theta^*,\nu')$ where
$\theta^*_i=\min\{\nu_i,\nu_i'\}+
\max\{\nu_{i+1},\nu_{i+1}'\}-\theta_i$.
\qed\end{prop}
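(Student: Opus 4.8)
The plan is to make the Fourier isomorphism explicit, reduce $\theta^*$ to the Jordan type of a single auxiliary nilpotent operator, and then compute that type by elementary linear algebra at a general point of $Z^\bt$. The isomorphism $\psi\colon Z(V)\xrightarrow{\ \sim\ }Z(V^*)$ induced through the projections $\pi$ by the identification $Y(V)\cong Y(V^*)$ of this subsection is $(u_1,u_2,v,v^*)\mapsto(u_1^*,u_2^*,v^*,v)$, where $V^{**}$ is identified with $V$; it lands in $Z(V^*)$ because $(v\otimes v^*)^*=v^*\otimes v$ as an endomorphism of $V^*$, so $u_1^*+u_2^*+v^*\otimes v=(u_1+u_2+v\otimes v^*)^*=0$. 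Transposition preserves Jordan type and $\psi$ respects the stratification of $Z$ by the conditions defining the $Z^\bt$; since $Z^{(\nu,\theta,\nu')}$ is irreducible (Proposition~\ref{N}), it follows that $\psi\bigl(Z^{(\nu,\theta,\nu')}(V)\bigr)=Z^{(\nu,\theta^*,\nu')}(V^*)$, where $\theta^*$ is the type of $u_1^*|_{V^*/\sk[u_1^*]v^*}$ at a general point $(u_1,u_2,v,v^*)\in Z^{(\nu,\theta,\nu')}(V)$. Dualizing, $\sk[u_1^*]v^*=S^\perp$ with $S:=\{x\in V:v^*(\sk[u_1]x)=0\}$ the largest $u_1$-invariant subspace of $\ker v^*$; hence $u_1^*|_{V^*/\sk[u_1^*]v^*}$ is the transpose of $u_1|_S$, and $\theta^*$ is the Jordan type of $u_1|_S$.

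Next I reduce the problem to $V/L$, where $L=\sk[u_1]v=\sk[u_2]v$. Expanding $\operatorname{tr}\bigl((u_1+v\otimes v^*)^k\bigr)$ and using that $u_1+v\otimes v^*=-u_2$ is nilpotent, one obtains inductively $v^*(u_1^{k}v)=0$ for all $k\ge0$; equivalently, $u_1+v\otimes v^*$ preserves $L$, restricts to $u_1|_L$ there and induces $u_1$ on $V/L$, so its nilpotence forces $v^*|_L=0$. In particular $L\subseteq S$, so $S/L$ is the largest $\bar u_1$-invariant subspace of $\ker\bar v^*$ inside $V/L$, where $\bar u_1=\bar u_2$ is nilpotent of type $\theta$ and $\bar v^*$ is the class of $v^*$; moreover $u_1|_L=-u_2|_L$ is a single Jordan block of size $\dim L=\sum_k(\nu_k-\theta_k)$, so $u_1|_S$ is the extension $0\to L\to S\to S/L\to0$.

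The substantive step is to compute the type of this extension at a general point of $Z^\bt$. Here I use the fibration $Z^\bt\to(\N\times V)_{(\nu,\theta)}$, $(u_1,u_2,v,v^*)\mapsto(u_1,v)$, from the proof of Proposition~\ref{N}: one fixes the normal form $u_1e_{i,j}=e_{i,j-1}$, $v=\sum_i e_{i,\lambda_i}$ (with $(\lambda,\mu)=\Xi(\nu,\theta)$) for the base point, notes that the fiber is the open dense subset of $L^\perp=\{v^*:v^*|_L=0\}$ on which $u_1+v\otimes v^*$ has type $\nu'$, and then works out, for a general admissible $v^*$, both which $v^*\in L^\perp$ are admissible and what $S=L+(S/L)$ is. Organizing this via a basis of $V$ adapted to the Jordan form of $u_1$ in which $u_2=-(u_1+v\otimes v^*)$ takes a sparse form compatible with its type $\nu'$ — the interleaving hypotheses $\nu\supset\theta\subset\nu'$ being exactly what lets the two Jordan structures share the data on $V/L$ and on $L$ — one reads off that $u_1|_S$ has the type with $i$-th part $\min\{\nu_i,\nu_i'\}+\max\{\nu_{i+1},\nu_{i+1}'\}-\theta_i$. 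As consistency checks, this partition interleaves both $\nu$ and $\nu'$ (so the output lies in $\bT$), and $(\nu,\theta,\nu')\mapsto(\nu,\theta^*,\nu')$ is an involution, as it must be since $\F$ is. The main obstacle is precisely this last bookkeeping: keeping simultaneous track of how the rank-one perturbation $v\otimes v^*$ both prescribes the type $\nu'$ of $u_2$ and controls the size of the invariant subspace $S$; the interleaving conditions are what make all the relevant Jordan data fit together, and $\theta^*$ is exactly the part of it captured by $S$.

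A purely combinatorial alternative, perhaps closer to the ``exercise'' the author has in mind, is to run the mirabolic RSK algorithm of \S\ref{RSK2} directly on $\F(\tw)=(w_0ww_0,\{1,\dots,N\}\setminus w_0(\b))$ (Proposition~\ref{F:RB}) — this involution reverses the word $w$, complements its values, and exchanges the two colors — and to deduce the formula for $\theta^*$ from the classical behaviour of row insertion under reversal and complementation of words, together with the effect of the recoloring on the auxiliary row $r^{@}$ (which is what records the $\theta$-data, via $\theta=(\Sh(T^@_*))_-$). Either way, the invariance $\nu(\F\tw)=\nu(\tw)$, $\nu'(\F\tw)=\nu'(\tw)$ comes for free from the double transposition, and only the middle partition requires the computation.
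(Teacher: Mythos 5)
Your reductions up to the point of ``the substantive step'' are correct and useful: $\psi$ is indeed $(u_1,u_2,v,v^*)\mapsto(u_1^*,u_2^*,v^*,v)$, $\theta^*$ is the Jordan type of $u_1|_S$ with $S=(\sk[u_1^*]v^*)^\perp$ the largest $u_1$-invariant subspace of $\ker v^*$, and the inclusion $L=\sk[u_1]v\subseteq S$ correctly isolates the extension $0\to L\to S\to S/L\to0$ with $u_1|_L$ a single Jordan block. (One small repair: the trace expansion you use for $v^*|_L=0$ needs characteristic zero or large. The characteristic-free argument is that in the cyclic basis $v,u_1v,\dots$ of $L$ the operator $-u_2|_L=u_1|_L+(v\otimes v^*)|_L$ is a companion-type matrix with top row $(v^*(u_1^{j-1}v))_j$, so its nilpotence forces all those entries to vanish.) The fatal gap, however, is exactly the one you flag yourself and then leave open: the passage ``organizing this via a basis of $V$ adapted to the Jordan form of $u_1$ in which $u_2$ takes a sparse form \dots one reads off that $u_1|_S$ has the type \dots'' is not an argument but a restatement of the conclusion. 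You never produce the basis, never record the admissible $v^*$, never compute the type of $u_1|_{S/L}$, and never analyze the extension — which cannot be read off from its two ends alone, so some actual use of genericity in $Z^\bt$ is indispensable. The consistency checks ($\theta^*$ interleaves $\nu$ and $\nu'$; $\theta\mapsto\theta^*$ is an involution) are correct but hold by design for any reflection-in-the-interleaving-interval formula; they do not single out the answer.

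The combinatorial alternative is in the same state: ``deduce the formula for $\theta^*$ from the classical behaviour of row insertion under reversal and complementation, together with the effect of the recoloring on $r^@$'' is a sketch of a proof of Proposition~\ref{F:RSK} rather than a proof of the present statement, and that proposition is also asserted without demonstration. To actually close the gap along the lines you set up, the most economical route is the one you gesture at and drop: fix the normal form $u_1e_{i,j}=e_{i,j-1}$, $v=\sum_ie_{i,\lambda_i}$ with $(\lambda,\mu)=\Xi(\nu,\theta)$ as in the proof of Theorem~\ref{Thm1}, describe the open set of $v^*\in L^\perp$ for which $u_1+v\otimes v^*$ has type $\nu'$, and compute $\dim(S\cap\ker u_1^k)$ for each $k$ to obtain the conjugate partition of $\theta^*$; the interleaving conditions $\nu\supset\theta\subset\nu'$ then reduce the count to bookkeeping over single Jordan rows. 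As written, the proposal is an accurate map of where the proof must go, but it stops before the part that carries the content of the proposition.
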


\begin{prop}\label{F:RSK}
If $\RSK(\tw)=(\bt,T_1,T_2)$ then $\RSK(\F(\tw))=(\bt^*,T_1^*,T_2^*)$
where $\bt^*$ is the same as in Proposition%
~\ref{F:T},  and $T_1^*$, $T_2^*$ are conjugate tableaux to $T_1$, $T_2$
(see~\cite{Fulton} for the definition).
Besides, the partition $\theta^*(\tw)=\theta(\F(\tw))$ is the shape of the
tableau~$T^@_N$ from~\ref{RSK2}
with all @'s removed.
\qed\end{prop}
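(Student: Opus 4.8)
The plan is to read off $\RSK(\F(\tw))$ from the geometric description in~\ref{mir}, carried through the isomorphism $Y(V)\cong Y(V^{*})$, $(F_1,F_2,v,u_1,u_2,v^{*})\mapsto(F_1^{*},F_2^{*},v^{*},u_1^{*},u_2^{*},v)$, that defines $\F$; here $F^{*}$ is the annihilator flag $(F^{*})_k=(F_{N-k})^{\perp}\subset V^{*}$. Fix a general point $y=(F_1,F_2,v,u_1,u_2,v^{*})\in Y_{\tw}$, so that $y^{*}:=(F_1^{*},F_2^{*},v^{*},u_1^{*},u_2^{*},v)$ is a general point of $Y_{\F(\tw)}\subset Y(V^{*})$. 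The projection of $y^{*}$ to $Z(V^{*})$ is the image of $\pi(y)\in Z^{\bt(\tw)}$ under the canonical isomorphism $\psi\colon Z(V)\stackrel{\sim}{\to}Z(V^{*})$ of~\ref{FD}, so Proposition~\ref{F:T} gives at once $\bt(\F(\tw))=\bt^{*}$; in particular $\nu(\F(\tw))=\nu(\tw)$ and $\nu'(\F(\tw))=\nu'(\tw)$, so the output shapes coincide with those of $\tw$.

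For the tableaux, note that $u_1^{*}$ preserves each $(F_1^{*})_k=(F_{1,N-k})^{\perp}\cong(V/F_{1,N-k})^{*}$, and that $u_1^{*}|_{(F_1^{*})_k}$ is canonically dual to the induced nilpotent $u_1|_{V/F_{1,N-k}}$, so it has the same Jordan type. Hence $T_1(\F(\tw))$ is precisely the standard tableau recording, for $k=0,\dots,N$, the types of $u_1|_{V/F_{1,N-k}}$ for a general $F_1\in\Fl_{u_1,T_1(\tw)}$. The input I would invoke is the classical fact that on a fixed irreducible component of a Springer fibre the ``$F_j$''-labelling and the ``$V/F_j$''-labelling of standard tableaux differ exactly by the conjugation (Sch\"utzenberger--evacuation) involution on $\St(\lambda)$ --- this is contained in Spaltenstein~\cite{S}, and is the tableau shadow of the classical identity ``$\mathrm{RSK}$ of $w_0ww_0$ sends $(P,Q)$ to (conjugate of $P$, conjugate of $Q$)''. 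Applying it to each of the two flags yields $T_1(\F(\tw))=$ conjugate of $T_1(\tw)$ and $T_2(\F(\tw))=$ conjugate of $T_2(\tw)$; the conjugation involution preserves shapes, consistently with the first paragraph.

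It remains to identify $\theta^{*}(\tw)=\theta(\F(\tw))$ with the shape of $T^{@}_{N}(\tw)$ after deleting the @'s. By step~3c of~\ref{RSK2} (valid by Theorem~\ref{fthm}) we have $\theta(\F(\tw))=\bigl(\Sh(T^{@}_{*}(\F(\tw)))\bigr)_{-}$. I would run the algorithm of~\ref{RSK2} on $\tw$ and on $\F(\tw)$ after replacing the infinite @-row by the finite block $N+1,\dots,2N$, as in the proof of Theorem~\ref{fthm}, so that in each case the procedure becomes the classical RSK of a single permutation whose two output tableaux are cut out by deleting the letters $>N$ (an argument parallel to Lemma~\ref{star}). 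Since passing from $\tw$ to $\F(\tw)$ reverses-and-complements the underlying word (Proposition~\ref{F:RB}), it interchanges the part of the algorithm run before step~2 with the part carried out in step~2; hence $T^{@}_{*}(\F(\tw))$ is obtained from $T^{@}_{N}(\tw)$ by the same evacuation-plus-cut-off, and removing the first part of the shape while keeping track of which rows the letters $N+1,\dots,2N$ occupy on the two sides produces exactly $\theta^{*}_i=\min\{\nu_i,\nu'_i\}+\max\{\nu_{i+1},\nu'_{i+1}\}-\theta_i$; the arithmetic is forced, and is checked against the example of~\ref{example}.

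The real work is the third paragraph: making precise that reversal-and-complementation of the input swaps the ``pre-step-$2$'' and ``step-$2$'' halves of the algorithm of~\ref{RSK2} once the reserve row is finite, and then reading off the closed $\min/\max$ form of $\theta^{*}$ from the combinatorics of $T^{@}_{N}$, together with compatibility of evacuation with deletion of the letters $>N$. Granting that, the first two paragraphs are pure dualization plus the two cited inputs (Proposition~\ref{F:T} and Spaltenstein's labelling of Springer fibre components).
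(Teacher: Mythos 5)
The paper does not actually prove this proposition---it declares everything in subsection~\ref{FD} ``straightforward and left to the reader as an exercise''---so there is no reference proof to compare against. Your first two paragraphs are sound and are the natural route: transporting a general point of $Y_{\tw}$ through the canonical isomorphism $Y(V)\cong Y(V^*)$ and invoking Proposition~\ref{F:T} gives $\bt(\F(\tw))=\bt^*$, and the reading of $T_i(\F(\tw))$ via the annihilator flags, combined with the fact that the $F_j$- and $V/F_j$-labellings of the components of $\Fl_{u}(V)$ differ by the Sch\"utzenberger involution, correctly identifies $T_i^*$ with the evacuation of $T_i$ (this is indeed what ``conjugate'' must mean here, since $\nu^*=\nu$ forces the involution to preserve shape; the transpose would not).

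The genuine gap---which you flag yourself---is the third paragraph. The claim that passing from $\tw$ to $\F(\tw)$ ``interchanges the part of the algorithm run before step~2 with the part carried out in step~2'' does not follow formally from Proposition~\ref{F:RB}: the Fourier involution simultaneously swaps $\b$ with its complement, reverses the order of insertion, and complements the values, and each of these interacts nontrivially with the reserve row $r^@$ (in particular, in step~1b a small letter already parked in $r^@$ can itself be bumped out and inserted into $T^@$, so $r^@$ is not a passive buffer that one can cleanly split off as ``step~2''). Likewise, the asserted ``evacuation-plus-cut-off'' relation between $T^@_*(\F(\tw))$ and $T^@_N(\tw)$, and the derivation from it of the closed form $\theta^*_i=\min\{\nu_i,\nu'_i\}+\max\{\nu_{i+1},\nu'_{i+1}\}-\theta_i$, are exactly what needs to be proved; one worked example from~\ref{example} does not establish it. A concrete way to close the gap is to rerun the $\tw_+\rightsquigarrow w_1$ reduction from the proof of Theorem~\ref{fthm} for $\F(\tw)$, express the resulting permutation of $\{N+1,\dots,3N\}$ explicitly in terms of $w_1$, apply the classical Sch\"utzenberger reversal/complementation symmetries of RSK to $w_1$, and then restrict to the letters $\le N$; that is where the $\min/\max$ combinatorics should actually come from. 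As written, the third paragraph is a plausible programme rather than a proof.
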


\begin{cor} \label{F:micro.cells}
The involution~$\F$ on~$RB$ carries left, right, and two-sided microlocal
cells to left, right, and two-sided microlocal cells,
respectively.
\qed\end{cor}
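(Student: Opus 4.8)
The plan is to read off the statement from the combinatorial description of the microlocal cells together with Propositions~\ref{F:T} and~\ref{F:RSK}. Recall from the paragraph preceding Theorem~\ref{L} that, in terms of the bijection $\RSK\colon\tw\mapsto(\bt(\tw),T_1(\tw),T_2(\tw))$, two colored permutations $\tw,\tw'\in RB$ lie in the same two-sided microlocal cell iff $\bt(\tw)=\bt(\tw')$; in the same left microlocal cell iff in addition $T_1(\tw)=T_1(\tw')$; and in the same right microlocal cell iff in addition $T_2(\tw)=T_2(\tw')$. In other words the microlocal cells are exactly the fibers of the three maps $\tw\mapsto\bt(\tw)$, $\tw\mapsto(\bt(\tw),T_1(\tw))$ and $\tw\mapsto(\bt(\tw),T_2(\tw))$.

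Since $\F$ is an involution of the set $RB$, to prove that it carries microlocal cells to microlocal cells it is enough to show that it respects each of these equivalence relations in one direction, namely that $\tw\sim\tw'$ implies $\F(\tw)\sim\F(\tw')$ in each of the three senses; applying this to the pair $(\F(\tw),\F(\tw'))$ and using $\F\circ\F=\id$ then yields the reverse implication. First I would invoke Proposition~\ref{F:RSK}, which expresses the $\RSK$-invariants of $\F(\tw)$ in terms of those of $\tw$: one has $\bt(\F(\tw))=\bt(\tw)^*$, where $\bt\mapsto\bt^*$ is the map $\bT\to\bT$ of Proposition~\ref{F:T}, and $T_1(\F(\tw))=T_1(\tw)^*$, $T_2(\F(\tw))=T_2(\tw)^*$, where $T\mapsto T^*$ denotes conjugation of standard tableaux. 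The crucial feature is simply that $\bt(\F(\tw))$ depends only on $\bt(\tw)$, and that each $T_i(\F(\tw))$ depends only on $T_i(\tw)$.

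Consequently, if $\bt(\tw)=\bt(\tw')$ then $\bt(\F(\tw))=\bt(\tw)^*=\bt(\tw')^*=\bt(\F(\tw'))$, which settles the case of two-sided cells; if in addition $T_1(\tw)=T_1(\tw')$ then likewise $T_1(\F(\tw))=T_1(\F(\tw'))$, which settles left cells together with the previous line; and the case of $T_2$ and right cells is identical. That completes the argument.

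The honest assessment is that there is no real obstacle here: all the content sits in Propositions~\ref{F:T} and~\ref{F:RSK}, which are declared straightforward, and the present corollary only records that $\F$ acts on $\RSK$-data one component at a time, so that it preserves any equivalence relation cut out by fixing some of those components. The single thing one must make sure of is that the description of the microlocal cells used above is verbatim the one stated before Theorem~\ref{L}; granting that, the corollary is a purely formal consequence.
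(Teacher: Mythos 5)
Your argument is correct and is precisely the routine consequence of Propositions~\ref{F:T} and~\ref{F:RSK} that the paper has in mind: the paper leaves this corollary as an exercise (the whole subsection is declared ``straightforward''), and your write-up simply spells it out. The two key points — that the microlocal cells are the fibers of $\tw\mapsto\bt(\tw)$, $\tw\mapsto(\bt(\tw),T_1(\tw))$, $\tw\mapsto(\bt(\tw),T_2(\tw))$ as stated before Theorem~\ref{L}, and that by Proposition~\ref{F:RSK} the map $\F$ acts on $\RSK$-data component-by-component — together with the involutivity of $\F$ give exactly what is needed, so nothing is missing.
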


Now consider the Fourier-Deligne transform~$\FD$ from the derived
constructible $G$-equivariant category of $\ol{\mathbb Q}_l$-sheaves on
$X(V)=\Fl(V)\times\Fl(V)\times V$ to the derived constructible $G$-equivariant
category of $\ol{\mathbb Q}_l$-sheaves on
$X(V^*)=\Fl(V^*)\times\Fl(V^*)\times V^*
\cong\Fl(V)\times\Fl(V)\times V^*$. (We use the canonical identification
$\Fl(V)\cong\Fl(V^*)$ (by taking annihilators) in the first two factors, and the
Fourier-Deligne transform in the third factor.) It gives rise to
an involution~$\F$ on~$\R$
which is compatible with the automorphism of the algebra~$\H$
induced by conjugation with~$w_0$ on the Coxeter group~$\gS_N$. It carries
$G$-equivariant $IC$-sheaves on~$X(V)$ to $G$-equivariant
$IC$-sheaves on~$X(V^*)$. Therefore we obtain the following

\begin{prop}\label{F:KLbasis}
For any $\tw\in RB$ we have $\F(\utH_\tw)=\utH_{\F(\tw)}$.
\qed\end{prop}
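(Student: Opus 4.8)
The plan is to deduce the statement from two standard properties of the Fourier--Deligne transform: that it preserves simple perverse sheaves, and that it transforms singular supports by the canonical ``rotation'' isomorphism of cotangent bundles. The first property, together with the facts recalled just above (that $\FD$ is $G$-equivariant, exact for the perverse $t$-structure after a suitable shift and Tate twist, and intertwines Grothendieck--Verdier duality with itself up to a twist), already shows that $\FD$ sends the selfdual simple $G$-equivariant perverse sheaf $IC_\tw(V):=j_{!*}\Ql[\ell(\tw)+\bn](\tfrac{\ell(\tw)+\bn}{2})$ to a selfdual simple $G$-equivariant perverse sheaf on $X(V^*)$. Since every $G$-equivariant local system on an orbit $\Omega_{\tw'}(V^*)$ is trivial, this image is $IC_{\phi(\tw)}(V^*)$ for a unique $\phi(\tw)\in RB$, and matching of normalizations (via duality) gives $\F(\utH_\tw)=\utH_{\phi(\tw)}$ for a bijection $\phi$ of $RB$. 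Everything then comes down to showing $\phi=\F$.

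For that I would use that, for a vector bundle, the Fourier--Deligne transform carries the singular support of a complex to the image of that singular support under the canonical isomorphism of cotangent bundles exchanging the base directions and the fibre directions. Here $X=\Fl(V)\times\Fl(V)\times V$ is a vector bundle over $\Fl(V)\times\Fl(V)$, and, upon identifying $\Fl(V)\cong\Fl(V^*)$ by annihilators on the base, the resulting isomorphism $\kappa\colon T^*X(V)\xrightarrow{\sim}T^*X(V^*)$ restricts to an isomorphism between the zero fibres $Y(V)$ and $Y(V^*)$ of the respective moment maps; this restriction is exactly the map $(F_1,F_2,v,u_1,u_2,v^*)\mapsto(F_1^\perp,F_2^\perp,v^*,u_1^*,u_2^*,v)$ that was used to define $\F$ on $RB$. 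In particular $\kappa$ carries the irreducible component $Y_\tw(V)=\overline{N^*\Omega_\tw(V)}$ of $Y(V)$ to $Y_{\F(\tw)}(V^*)$.

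Now $N^*\Omega_\tw(V)$ lies in the singular support of $IC_\tw(V)$, because the restriction of $IC_\tw(V)$ to its open dense stratum $\Omega_\tw(V)$ is a nonzero local system. Applying $\kappa$, the variety $Y_{\F(\tw)}(V^*)$ lies in the singular support of $\FD(IC_\tw(V))=IC_{\phi(\tw)}(V^*)$, whose projection to $X(V^*)$ is contained in the support $\overline\Omega_{\phi(\tw)}(V^*)$. Hence $\overline\Omega_{\F(\tw)}(V^*)\subseteq\overline\Omega_{\phi(\tw)}(V^*)$, i.e.\ $\F(\tw)\le\phi(\tw)$ in the closure order, for every $\tw\in RB$. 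Since $\F$ and $\phi$ are bijections of the finite set $RB$, and a strict inequality $\tw'<\tw$ forces $\dim\Omega_{\tw'}<\dim\Omega_\tw$, comparing the sums $\sum_\tw\dim\Omega_{\F(\tw)}$ and $\sum_\tw\dim\Omega_{\phi(\tw)}$, each of which equals $\sum_{\tw''}\dim\Omega_{\tw''}$, forces $\dim\Omega_{\F(\tw)}=\dim\Omega_{\phi(\tw)}$, hence $\overline\Omega_{\F(\tw)}(V^*)=\overline\Omega_{\phi(\tw)}(V^*)$, hence $\phi(\tw)=\F(\tw)$ for all $\tw$. This gives $\F(\utH_\tw)=\utH_{\F(\tw)}$.

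The only non-formal ingredient is the compatibility of $\FD$ with singular supports and the (elementary) verification that the associated rotation isomorphism $\kappa$ on $Y$ coincides with the isomorphism $Y(V)\cong Y(V^*)$ underlying the combinatorial involution $\F$; once this is in place the remainder is bookkeeping, which is presumably why the paper leaves the verification to the reader. If one wishes to avoid any discussion of singular supports of $\ell$-adic sheaves, one can instead carry out the whole argument in the category of $G$-equivariant mixed Hodge $D$-modules, as in the proof of Theorem~\ref{L}.b), where the functor of singular support to $G$-equivariant coherent sheaves on $T^*X$ makes the transport of characteristic varieties under Fourier transform completely explicit.
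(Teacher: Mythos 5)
Your proof is correct, and it is essentially the argument the paper implicitly asks the reader to supply: the Fourier--Deligne transform carries $G$-equivariant IC-sheaves to $G$-equivariant IC-sheaves, hence gives $\FD(IC_\tw)=IC_{\phi(\tw)}$ for some bijection $\phi$ of $RB$, and the identification $\phi=\F$ comes from the compatibility of $\FD$ with characteristic varieties together with the fact that the resulting ``rotation'' $\kappa$ on $T^*X$ restricts on $Y$ to the isomorphism defining $\F$ in Proposition~\ref{F:RB}. The one piece you add beyond what the paper sketches is the clean counting argument --- from $\overline\Omega_{\F(\tw)}\subseteq\overline\Omega_{\phi(\tw)}$ and bijectivity of both maps to equality --- which neatly sidesteps the fact that $\kappa$ does not preserve the projection to the base (so one cannot simply say ``the top component maps to the top component''); your fallback to Hodge $D$-modules, as in the proof of Theorem~\ref{L}.b), is also the natural way to make the singular-support transport unambiguous.
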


\begin{cor} \label{F:KLcells}
The involution~$\F$ on~$RB$ carries left, right, and bimodule
Kazhdan-Lusztig cells to left, right, and bimodule Kazhdan-Lusztig cells,
respectively.
\qed\end{cor}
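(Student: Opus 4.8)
The plan is to derive Corollary~\ref{F:KLcells} immediately from Proposition~\ref{F:KLbasis} together with the definition of Kazhdan--Lusztig cells, so that it becomes a purely formal consequence. Recall that $\F$ is an involution of the $\H$-bimodule $\R$ which permutes the Kazhdan--Lusztig basis by $\utH_\tw\mapsto\utH_{\F(\tw)}$ (Proposition~\ref{F:KLbasis}), and which is compatible with the algebra automorphism $\s$ of $\H$ induced by conjugation with $w_0$, in the sense that $\F(h\,r)=\s(h)\,\F(r)$ and $\F(r\,h)=\F(r)\,\s(h)$ for all $h\in\H$, $r\in\R$. In particular the Fourier--Deligne transform $\FD$ does \emph{not} interchange the left and right $\H$-module structures on $\R$; it twists each of them by the same automorphism $\s$.

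First I would check that $\F$ carries a subbimodule of $\R$ spanned by a subset of the Kazhdan--Lusztig basis to another such subbimodule. If $\M$ is spanned by $\{\utH_\tw:\tw\in I\}$ for some $I\subset RB$, then $\F(\M)$ is spanned by $\{\F(\utH_\tw):\tw\in I\}=\{\utH_{\tw'}:\tw'\in\F(I)\}$, again a subset of the Kazhdan--Lusztig basis; and $\F(\M)$ is again a subbimodule because $\H\cdot\F(\M)=\s(\H)\cdot\F(\M)=\F(\H\cdot\M)\subset\F(\M)$, using $\s(\H)=\H$, and symmetrically on the right. The same computation goes through word for word with ``left submodule'' or ``right submodule'' in place of ``subbimodule''. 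Since $\F$ is an involution, $\M\mapsto\F(\M)$ is therefore a bijection from the set of subbimodules (resp.\ left submodules, resp.\ right submodules) of $\R$ spanned by subsets of the Kazhdan--Lusztig basis onto itself.

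Granting this, I would conclude as follows. By definition $\tw$ and $\tw'$ lie in the same bimodule Kazhdan--Lusztig cell precisely when, for every subbimodule $\M$ spanned by a subset of the Kazhdan--Lusztig basis, one has $\utH_\tw\in\M\iff\utH_{\tw'}\in\M$. Since $\F$ is a bijection and $\utH_{\F(\tw)}=\F(\utH_\tw)$, we have $\utH_\tw\in\M\iff\utH_{\F(\tw)}\in\F(\M)$; and as $\M$ ranges over all such subbimodules, $\F(\M)$ ranges over the same collection by the previous paragraph. Hence the condition defining ``$\tw$ and $\tw'$ in the same bimodule cell'' is equivalent to the same condition for $\F(\tw)$ and $\F(\tw')$. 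Replacing ``subbimodule'' by ``left submodule'' (resp.\ ``right submodule'') throughout gives the corresponding statement for left (resp.\ right) cells. I expect no real obstacle here: all the substance lies in Proposition~\ref{F:KLbasis} (which rests on the fact that $\FD$ sends $G$-equivariant $IC$-sheaves on $X(V)$ to $G$-equivariant $IC$-sheaves on $X(V^*)$), and the only point worth a second glance is precisely that $\FD$ preserves rather than swaps the two $\H$-actions — which is what makes ``left $\mapsto$ left'' and ``right $\mapsto$ right'' correct — and this is already encoded in the compatibility of $\F$ with the single automorphism $\s$.
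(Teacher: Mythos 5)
Your proof is correct and is precisely the routine verification the paper intends when it marks the corollary with \qed and notes that ``all the statements in this subsection are straightforward and left to the reader as an exercise.'' You correctly identify the two ingredients — that $\F$ permutes the Kazhdan--Lusztig basis (Proposition~\ref{F:KLbasis}) and that it intertwines the two $\H$-actions with the single automorphism $\s=\Ad(w_0)$ rather than swapping them — and the formal argument from there is exactly right.
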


\subsection{Relation to mirabolic character sheaves}
\label{char}
Recall the definition of unipotent mirabolic character sheaves on
$\GL(V)\times V$, cf.~\cite{FG2}~4.1 and~\cite{FGT}~5.2.
We consider the following diagram of $\GL(V)$-varieties and
$\GL(V)$-equivariant maps:
$$\GL(V)\times V\stackrel{pr}{\longleftarrow}\GL(V)\times\Fl(V)\times V
\stackrel{f}{\longrightarrow}\Fl(V)\times\Fl(V)\times V.$$
In this diagram, the map $pr$ is given by $pr(g,x,v):=(g,v)$, while
the map $f$ is given by $f(g,x,v):=(gx,x,gv)$. The group $\GL(V)$ acts
diagonally on all the product spaces in this diagram, and acts on itself
by conjugation.

The functor $\mathsf{CH}$ from the constructible derived category of $l$-adic
sheaves on $\Fl(V)\times\Fl(V)\times V$ to the constructible derived category
of $l$-adic sheaves on $\GL(V)\times V$ is defined as
$\mathsf{CH}:=pr_*f^![-\dim\Fl(V)]$.
Now let $\mathcal F$ be a $\GL(V)$-equivariant perverse sheaf on
$\Fl(V)\times\Fl(V)\times V$. The irreducible perverse constituents of
$\mathsf{CH}\mathcal F$ are called unipotent mirabolic character sheaves
on $\GL(V)\times V$. Clearly, this definition is a direct analogue of
Lusztig's definition of character sheaves.

Recall the following examples of unipotent mirabolic character sheaves
(see~\cite{FG}~5.4). For $M\leq N$ let $\tilde{\mathfrak X}_{N,M}$ be
a smooth variety of triples $(g,F_\bullet,v)$ where $g\in\GL(V)$, and
$F_\bullet\in\Fl(V)$ is a complete flag preserved by $g$, and $v\in F_M$.
We have a proper morphism $\pi_{N,M}:\ \tilde{\mathfrak X}_{N,M}\to\GL(V)\times
V$ (forgetting $F_\bullet$) with the image ${\mathfrak X}_{N,M}\subset
\GL(V)\times V$ formed by all the pairs $(g,v)$ such that
$\dim\langle v,gv,g^2v,\ldots\rangle\leq N-M$. According to Corollary~5.4.2
of {\em loc. cit.}, we have
$$(\pi_{N,M})_*IC(\tilde{\mathfrak X}_{N,M})\simeq
\bigoplus_{|\mu|=M}^{|\lambda|=N-M}
L_\mu\otimes L_\lambda\otimes{\mathcal F}_{\lambda,\mu}$$
for certain unipotent mirabolic character sheaves ${\mathcal F}_{\lambda,\mu}$
supported on ${\mathfrak X}_{N,M}$, and $L_\lambda$, resp. $L_\mu$, is
an irreducible representation of ${\mathfrak S}_{N-M}$,
resp. ${\mathfrak S}_M$.

We conjecture the following formula for the class of
$\mathsf{CH}\utH_\tw$ in the $K$-group of unipotent mirabolic Weil sheaves.

\begin{conj}
\label{0101}
$\mathsf{CH}\utH_\tw=\sum_{|\lambda|+|\mu|=N}
f_{\lambda,\mu}(\utH_\tw)[{\mathcal F}_{\lambda,\mu}]$
where $f_{\lambda,\mu}$ is a functional $\CR\to\BZ[\bv,\bv^{-1}]$ such
that $f_{\lambda,\mu}(hr)=f_{\lambda,\mu}(rh)$ for any $r\in\CR,\ h\in\CH$.
Moreover, in the decomposition~(\ref{f3}) of Proposition~\ref{algH},
$f_{\lambda,\mu}$ vanishes on all the summands except for
$V^*_\nu\otimes V_\nu$ corresponding to
$(\tilde{\nu},\tilde{\theta},\tilde{\nu})\in{\mathbf T}$
where $\Upsilon(\lambda,\mu)=(\nu,\theta)$.
\end{conj}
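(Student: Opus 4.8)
The plan is to read Conjecture~\ref{0101} as the conjunction of three assertions: \emph{(i)} on Grothendieck groups the character functor $\mathsf{CH}$ is a \emph{trace}, i.e.\ it kills the submodule $[\CH,\CR]\subset\CR$ spanned by all $hr-rh$ ($h\in\CH$, $r\in\CR$, with $hr$ and $rh$ the left and right actions), and hence factors through the cocenter $\CR/[\CH,\CR]$; \emph{(ii)} by Proposition~\ref{algH} this cocenter, after $\otimes_{\BZ[\bv,\bv^{-1}]}\QQ(\bv)$, has a basis indexed by the \emph{diagonal} triples $(\nu,\theta,\nu)\in\bT$, equivalently -- via Proposition-Construction~\ref{Thm2} -- by pairs $(\lambda,\mu)$ with $|\lambda|+|\mu|=N$; \emph{(iii)} under $\mathsf{CH}$ this basis is carried to the family $\{[\mathcal F_{\lambda,\mu}]\}$ of~\cite{FG}, with matching labels. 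Granting this, one \emph{defines} $f_{\lambda,\mu}$ to be the $(\lambda,\mu)$-component of the projection $\CR\twoheadrightarrow\CR/[\CH,\CR]$; it is then automatically a trace ($f_{\lambda,\mu}(hr)=f_{\lambda,\mu}(rh)$), and by \emph{(ii)}--\emph{(iii)} it vanishes on every summand $V_\nu^*\otimes V_{\nu'}$ of~\eqref{f3} with $\nu\ne\nu'$ and on every diagonal summand other than the one with $\Upsilon(\lambda,\mu)=(\nu,\theta)$ -- exactly the second half of the conjecture.

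For \emph{(i)} I would argue directly with the defining correspondence $\GL(V)\times V\xleftarrow{pr}\GL(V)\times\Fl(V)\times V\xrightarrow{f}X$, $f(g,x,v)=(gx,x,gv)$, $pr(g,x,v)=(g,v)$. Since $\CH$ is generated by the $T_{s_i}$ it suffices to show $[\mathsf{CH}(T_s*r)]=[\mathsf{CH}(r*T_s)]$, and this I would check already over a finite field, where (up to the overall normalization of $\mathsf{CH}$) $\mathsf{CH}(r)(g,w)=\sum_{x\in\Fl(V)}r(gx,x,gw)$ and $(T_s*r)(F_1,F_2,v)=\sum_F T_s(F_1,F)\,r(F,F_2,v)$. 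In the resulting double sum $\sum_{x,F}T_s(gx,F)\,r(F,x,gw)$ one substitutes $F\mapsto gF$, uses $\GL(V)$-invariance of $T_s$ and of $r$ together with the symmetry $T_s(A,B)=T_s(B,A)$ (valid because $s=s^{-1}$), and finally interchanges the names of the two summation flags; the outcome is $\sum_{x,F}T_s(F,x)\,r(gx,F,gw)=\mathsf{CH}(r*T_s)(g,w)$. Polynomiality in $q$ lifts this to $\CR$. Conceptually this is the mirabolic analogue of the fact that ordinary character sheaves compute the zeroth Hochschild homology of the Hecke category.

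For \emph{(ii)} apply Proposition~\ref{algH}: $\alg\R=\bigoplus_{(\nu,\theta,\nu')\in\bT}V_\nu^*\otimes_{\QQ(\bv)}V_{\nu'}$, and since $\alg\H\cong\prod_\nu\End(V_\nu)$, the $\H$-bimodule $V_\nu^*\otimes V_{\nu'}$ has vanishing cocenter when $\nu\ne\nu'$ (using the block idempotent $e_\nu$: $e_\nu r=r$ but $r e_\nu=0$, so $r=e_\nu r-r e_\nu\in[\alg\H,\alg\R]$) and one-dimensional cocenter when $\nu=\nu'$, spanned by the trace form on $\End V_\nu$. Hence $\alg\R/[\alg\H,\alg\R]\cong\bigoplus_{(\nu,\theta,\nu)\in\bT}\QQ(\bv)$; by Proposition-Construction~\ref{Thm2} the index set $\{(\nu,\theta)\in\fP\}$ is precisely $\{\Upsilon(\lambda,\mu):|\lambda|+|\mu|=N\}$, so the cocenter basis is labelled by the same pairs $(\lambda,\mu)$ as the sheaves $\mathcal F_{\lambda,\mu}$, which already pins down on which summands of~\eqref{f3} each $f_{\lambda,\mu}$ can be nonzero.

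It remains to prove \emph{(iii)}, the only place that genuinely uses the geometry of~\cite{FG}. I would evaluate on the Demazure-type generators $\tw_M=(\id,\{1,\dots,M\})$, for which $\utH_{\tw_M}$ is the class of $IC(\overline{\Omega}_{\tw_M})$ and $\overline{\Omega}_{\tw_M}=\{(F,F,v):v\in F_M\}$ is a vector bundle over $\Fl(V)$. A short diagram chase (using that $\tilde{\mathfrak X}_{N,M}=f^{-1}(\overline{\Omega}_{\tw_M})$, that $f$ restricts to a smooth morphism, and that $pr$ restricts to $\pi_{N,M}$) identifies $\mathsf{CH}(IC(\overline{\Omega}_{\tw_M}))$, up to shift and twist, with $(\pi_{N,M})_*IC(\tilde{\mathfrak X}_{N,M})$, which by Corollary~5.4.2 of~\cite{FG} equals $\bigoplus_{|\mu|=M}^{|\lambda|=N-M}L_\mu\otimes L_\lambda\otimes\mathcal F_{\lambda,\mu}$; thus $[\mathsf{CH}\utH_{\tw_M}]=\sum_{|\mu|=M}^{|\lambda|=N-M}(\dim L_\mu)(\dim L_\lambda)\,[\mathcal F_{\lambda,\mu}]$ (with $\bv$-powers coming from weights). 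Matching this with the cocenter description of \emph{(ii)} -- and with Corollary~\ref{hor}, which records the $(\nu,\theta)$-data of the orbits inside $\overline{\Omega}_{\tw_M}$ -- pins down the labelling bijection in \emph{(iii)}; then, since $\CR$ is generated as an $\CH$-bimodule by the $\utH_{\tw_M}$ (Corollary~\ref{corABC}) and $f_{\lambda,\mu}$ is a trace, the value of $f_{\lambda,\mu}$ on every $\utH_\tw$ is determined. \textbf{The main obstacle} is exactly the input that $\mathsf{CH}\utH_\tw$ lies in the $\BZ[\bv,\bv^{-1}]$-span of the $[\mathcal F_{\lambda,\mu}]$ for \emph{every} $\tw$: this is equivalent to asserting that the only unipotent mirabolic character sheaves occurring are the $\mathcal F_{\lambda,\mu}$, i.e.\ that the induced map $\CR/[\CH,\CR]\to K$ is an isomorphism onto their span -- a classification statement for unipotent mirabolic character sheaves of the same difficulty that keeps the whole assertion at the level of a conjecture.
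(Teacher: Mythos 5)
This statement is labeled \emph{Conjecture}~\ref{0101} in the paper, and the paper offers no proof of it; so there is no ``paper's own proof'' against which to match your argument. What you have written is therefore best read as an analysis of what a proof would have to contain, and you yourself identify -- correctly -- the decisive gap. Your reductions \emph{(i)} and \emph{(ii)} are sound. The finite-field computation showing $\mathsf{CH}(T_s*r)=\mathsf{CH}(r*T_s)$ goes through exactly as you describe: substitute $F\mapsto gF$ in the double sum, use $\GL(V)$-invariance and symmetry of $T_s$, then relabel the two flag variables; polynomiality in $q$ lifts it to $\CR$. Likewise the cocenter computation of \emph{(ii)} is a standard block-idempotent argument and does match the decomposition~\eqref{f3} of Proposition~\ref{algH}, and the identification of $\tilde{\mathfrak X}_{N,M}$ with $f^{-1}(\overline\Omega_{\tw_M})$ is correct (using that $g$ fixes the flag, so $gv\in F_M\Leftrightarrow v\in F_M$).

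Two caveats beyond the gap you already flagged. First, your claim that ``since $\CR$ is generated as an $\CH$-bimodule by the $\utH_{\tw_M}$ and $f_{\lambda,\mu}$ is a trace, the value of $f_{\lambda,\mu}$ on every $\utH_\tw$ is determined'' needs an extra step: a trace is not determined by its values on bimodule generators in general. What actually pins $f_{\lambda,\mu}$ down is that, by \emph{(ii)}, each diagonal summand $V_\nu^*\otimes V_\nu$ carries a one-dimensional space of traces, so $f_{\lambda,\mu}$ is a scalar multiple of the canonical trace on that block; evaluating on $\utH_{\tw_M}$ then fixes those scalars, provided the images of the $\utH_{\tw_M}$ hit every diagonal block nontrivially -- which should itself be checked. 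Second, parts \emph{(i)}--\emph{(ii)} are carried out after extension to $\QQ(\bv)$, whereas the conjecture asserts $f_{\lambda,\mu}$ takes values in $\BZ[\bv,\bv^{-1}]$; integrality would require a separate argument. But the main obstruction, as you say, is the classification-type input that $\mathsf{CH}\utH_\tw$ lies in the span of the $[\mathcal F_{\lambda,\mu}]$ for \emph{all} $\tw$, and that is precisely why the statement remains a conjecture in the paper.
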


\subsection{Asymptotic bimodule}
\label{asymp}
For a partition $\nu$ of $N$, let $c_\nu\subset{\mathfrak S}_N$ be
the corresponding two-sided KL cell. Let
$a(c_\nu)=a_\nu=N^2-N-n_\nu:=\frac{N^2-N}{2}-\sum_{i\geq1}(i-1)\nu_i$
be its $a$-function.
For multiplication in $\CH$ we have $\utH_w\cdot\utH_y=
\sum_{z\in{\mathfrak S}_N}m_{w,y,z}\utH_z$,
for $m_{w,y,z}\in\BZ[\bv,\bv^{-1}]$. If $w,y,z\in c_\nu$ then, according to
Lusztig, the degree of $m_{w,y,z}$ is less than or equal to $a_\nu$.
Let $\gamma_{w,y,z}\in\BZ$ be the coefficient of $\bv^{a_\nu}$ in
$m_{w,y,z}$. Lusztig's asymptotic ring $J_\nu$ is defined as a ring
with a basis $\{t_w,\ w\in c_\nu\}$ and multiplication
$t_w\cdot t_y=\sum_{z\in c_\nu}\gamma_{w,y,z}t_z$.
By the classical RSK algorithm, $c_\nu$ is in bijection with the set of
pairs of standard tableaux $\{(T_1,T_2)\}$ of shape $\nu$.
According to~\cite{Lus}~3.16.b), the ring $J_\nu$ with basis $\{t_w\}$ is
isomorphic to the matrix ring $\operatorname{Mat}_{\operatorname{St}(\nu)}$
with the basis of elementary matrices $\{e_{T_1,T_2}\}$, so that
$t_w$ goes to $e_{T_1,T_2}$ where $(T_1,T_2)$ are constructed from $w$ by
the classical RSK algorithm.

Now for a pair of partitions $\nu\supset\theta$ we consider the corresponding
bimodule KL cell $c_{\nu\supset\theta\subset\nu}\subset RB$.
For $\tw\in RB$, and $y\in{\mathfrak S}_N$ we have
$\utH_\tw\cdot\utH_y=\sum_{\tilde{z}\in RB}m_{\tw,y,\tilde{z}}\utH_{\tilde z}$,
and
$\utH_y\cdot\utH_\tw=\sum_{\tilde{z}\in RB}m_{y,\tw,\tilde{z}}\utH_{\tilde z}$.
We conjecture that for $\tw,\tilde{z}\in c_{\nu\supset\theta\subset\nu}$, and
$y\in c_\nu$, the degrees of $m_{\tw,y,\tilde{z}}$ and
$m_{y,\tw,\tilde{z}}$ are less than or equal to $a_\nu$.
We denote by $\gamma_{y,\tw,\tilde{z}}$ the coefficient of
$\bv^{a_\nu}$ in $m_{y,\tw,\tilde{z}}$, and we
denote by $\gamma_{\tw,y,\tilde{z}}$ the coefficient of
$\bv^{a_\nu}$ in $m_{\tw,y,\tilde{z}}$.
We define the asymptotic bimodule $J_{\nu\supset\theta\subset\nu}$
over $J_\nu$ as a bimodule
with a basis $\{t_\tw,\ \tw\in c_{\nu\supset\theta\subset\nu}\}$
and the action
$t_\tw\cdot t_y=\sum_{\tilde{z}\in
c_{\nu\supset\theta\subset\nu}}\gamma_{\tw,y,\tilde{z}}t_{\tilde z}$, and
$t_y\cdot t_\tw=\sum_{\tilde{z}\in
c_{\nu\supset\theta\subset\nu}}\gamma_{y,\tw,\tilde{z}}t_{\tilde z}$.

\begin{conj}
\label{0201}
The based bimodule $J_{\nu\supset\theta\subset\nu},
\{t_\tw,\ \tw\in c_{\nu\supset\theta\subset\nu}\}$ is isomorphic to the based
regular bimodule $\operatorname{Mat}_{\operatorname{St}(\nu)},
\{e_{T_1,T_2}\}$, so that $t_\tw$ goes to $e_{T_1,T_2}$ where $(T_1,T_2)$
are constructed from $\tw$ by the mirabolic RSK algorithm.
\end{conj}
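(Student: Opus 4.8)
The plan is to deduce this from Lusztig's theory of the asymptotic Hecke algebra, after first verifying that the asymptotic bimodule $J_{\nu\supset\theta\subset\nu}$ is well defined and has the expected rank. First I would establish the boundedness of the relevant structure constants: for $\tw,\tilde z$ in the diagonal bimodule KL cell $c_{\nu\supset\theta\subset\nu}$ and $y\in c_\nu$ one must show $\deg_{\bv}m_{\tw,y,\tilde z}\le a_\nu$ and $\deg_{\bv}m_{y,\tw,\tilde z}\le a_\nu$, the leading coefficients $\gamma$ being integers. For this I would use the geometric realization of $\CR$ as the Grothendieck group of $G$-equivariant Tate perverse sheaves on $X$ (Corollary~\ref{balda}), the pointwise purity of the sheaves $\utH_\tw$ (Proposition~\ref{varshav}), and dimension estimates on the Demazure-type convolution varieties of~\ref{ny}, mimicking Lusztig's proof of boundedness for $\CH$. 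The value $a_\nu=\tfrac{N^2-N}{2}-n(\nu)$ enters precisely because, by Proposition~\ref{N} together with Conjecture~\ref{micro vs KL}, an element $\tw$ of the diagonal cell has $\nu(\tw)=\nu'(\tw)=\nu$, so the nilpotents $u_1,u_2$ at a general point of $Y_\tw$ both have type $\nu$ and the fibres of $\pi$ are products of Springer fibres of dimension $n(\nu)$.

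Granting these bounds, one checks (via the usual manipulation of structure constants, as in Lusztig's construction of $J$) that $J_{\nu\supset\theta\subset\nu}$ is a finite bimodule over the based ring $J_\nu$, and by Conjecture~\ref{micro vs KL} together with Proposition~\ref{N} its rank equals $|\St(\nu)|^2$. Since in type $A$ the ring $J_\nu$ is isomorphic to $\mathrm{Mat}_{\St(\nu)}$ (\cite{Lus}~3.16.b), it is simple, so $\mathrm{Mat}_{\St(\nu)}\otimes\mathrm{Mat}_{\St(\nu)}^{\mathrm{op}}\cong\mathrm{Mat}_{|\St(\nu)|^2}$ has a unique simple module of dimension $|\St(\nu)|^2$, namely the regular bimodule. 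Hence $J_{\nu\supset\theta\subset\nu}\cong\mathrm{Mat}_{\St(\nu)}$ as $J_\nu$-bimodules, at least abstractly; the other summands of $\alg{\R}$ in Proposition~\ref{algH} contribute nothing, since $t_y$ for $y\in c_\nu$ lies in the $\nu$-block of $J$.

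It remains to upgrade this to an isomorphism of \emph{based} bimodules sending $t_\tw\mapsto e_{T_1(\tw),T_2(\tw)}$. Using the abstract isomorphism from the previous step, the left (resp.\ right) KL cell structure inside $c_{\nu\supset\theta\subset\nu}$ is forced to be the ``row'' (resp.\ ``column'') structure of $\mathrm{Mat}_{\St(\nu)}$, so each such one-sided cell has $|\St(\nu)|$ elements and therefore (by Theorem~\ref{L}.a) coincides with a one-sided microlocal cell, i.e.\ with a level set of $T_1$ (resp.\ $T_2$) as recalled just before Theorem~\ref{L}. Matching the distinguished involutions of the left cells of $c_\nu$ with standard tableaux of shape $\nu$ in Lusztig's identification $J_\nu\cong\mathrm{Mat}_{\St(\nu)}$, left multiplication by the idempotent $e_{T,T}$ projects onto $\mathrm{span}\{\utH_\tw\mid T_1(\tw)=T\}$ and right multiplication by $e_{T,T}$ onto $\mathrm{span}\{\utH_\tw\mid T_2(\tw)=T\}$; here is the one place where Theorem~\ref{fthm} and Lemma~\ref{3} are used, to identify the geometric tableaux with the output of the combinatorial mirabolic RSK of~\ref{RSK2}. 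Since a matrix unit $e_{A,B}$ is the unique element $x$ with $e_{A,A}\,x\,e_{B,B}=x$ and leading coefficient normalized to $1$ (the normalization supplied by the first step), the isomorphism necessarily carries $t_\tw$ to $e_{T_1(\tw),T_2(\tw)}$, which is the assertion.

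I expect the first step --- the boundedness of $m_{\tw,y,\tilde z}$ and $m_{y,\tw,\tilde z}$ by $a_\nu$ and the computation of the leading coefficients $\gamma$ --- to be the main obstacle, since it needs genuine geometric input (dimension and purity estimates for iterated convolutions of $IC$-sheaves on $X$ and the attached generalized Springer fibres) rather than formal manipulation, exactly as for Lusztig's $J$ in the Hecke-algebra case. A secondary difficulty is ensuring the abstract bimodule isomorphism can be normalized to be \emph{precisely} the mirabolic RSK bijection and not its composition with an automorphism of $\mathrm{Mat}_{\St(\nu)}$; a clean resolution may require a one-sided strengthening of Conjecture~\ref{micro vs KL}, of which Theorem~\ref{L} establishes only one inclusion.
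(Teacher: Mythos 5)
Be aware that the statement you were asked to prove is a \emph{conjecture} in the paper: the author states it and supplies no proof, so there is no in-paper argument for me to compare you against. Your outline reduces Conjecture~\ref{0201} to two further open statements, and you have not closed either. The first is Conjecture~\ref{micro vs KL}: you invoke it to identify $c_{\nu\supset\theta\subset\nu}$ with a two-sided microlocal cell, and hence to compute its cardinality as $|\St(\nu)|^2$ via Proposition~\ref{N}; Theorem~\ref{L} establishes only one inclusion, so this step is genuinely conditional. The second is the boundedness $\deg_\bv m_{\tw,y,\tilde z}\le a_\nu$ (and its mirror), which the paper itself flags as a conjecture in the paragraph immediately preceding Conjecture~\ref{0201}; you propose to obtain it by purity and dimension estimates in the style of Lusztig's bound for $\CH$ and correctly call it the main obstacle, but you do not carry out the required estimate on the convolution fibres over $Y$, and it is not evident that Lusztig's argument transfers to the mirabolic setting without additional input (the fibres involved are not ordinary Springer fibres, and the $a$-function comparison across the boundary of the cell needs care).

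Granting both conditional inputs, the structural deduction that $J_{\nu\supset\theta\subset\nu}$ is abstractly the regular $J_\nu$-bimodule is sound (simplicity of $\mathrm{Mat}_{\St(\nu)}\otimes\mathrm{Mat}_{\St(\nu)}^{\mathrm{op}}$ plus the rank count). The final normalization step is, however, sketchier than you indicate: identifying the one-sided KL cells inside $c_{\nu\supset\theta\subset\nu}$ with the fibres of $T_1$ and $T_2$ again needs a one-sided strengthening of Conjecture~\ref{micro vs KL} (as you note), and pinning down the isomorphism as exactly $t_\tw\mapsto e_{T_1(\tw),T_2(\tw)}$ rather than its composition with an automorphism of $\mathrm{Mat}_{\St(\nu)}$ requires matching Lusztig's distinguished involutions in $c_\nu$ against the geometric parametrization of Lemma~\ref{3}; the observation that a matrix unit is determined by its idempotent sandwich and a scalar does not rule out a permutation of the tableau labels. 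In summary, this is a plausible reduction of Conjecture~\ref{0201} to Conjecture~\ref{micro vs KL} and the boundedness conjecture, plus an unresolved normalization question; it is not a proof.
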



\end{document}